\def\C{\mathbb{C}}
\def\R{\mathbb{R}}
\def\H{\mathbb{H}}
\def\T{\mathcal{T}}
\def\Z{\mathbb{Z}}
\def\D{\mathbb{D}}
\def\SS{\mathbb{S}}
\newtheorem{theorem}{Theorem}[section]
\newtheorem{lemma}[theorem]{Lemma}
\newtheorem{proposition}[theorem]{Proposition}
\newtheorem{corollary}[theorem]{Corollary}
\newtheorem{definition}[theorem]{Definition}
\newtheorem{conj}[theorem]{Conjecture}
\newtheorem{remark}[theorem]{Remark}
\begin{document}

\title[Convergence of discrete conformal maps]
%
{Discrete conformal geometry of polyhedral surfaces and its convergence}




\author{Feng Luo}

\address{Department of Mathematics, Rutgers University, Piscataway, NJ, 08854}

\email{fluo@math.rutgers.edu}

\author{Jian Sun}

\address{Mathematical Science Center, Tsinghua University, Beijing, 100084, China}

\email{jsun@math.tsinghua.edu.cn}

\author{Tianqi Wu}

\address{ Center of Mathematical Sciences and Applications, Harvard University, Cambridge, MA 02138  }

\email{tianqi@cmsa.fas.harvard.edu}

\subjclass[2000]{52C26, 58E30, 53C44}

\keywords{polyhedral metrics,  discrete
conformal map, Riemann mapping, triangulation, Delaunay triangulation,
 convex polyhedra and discrete harmonic functions}

\begin{abstract}
The  paper proves a result on the  convergence of discrete conformal maps to the Riemann mappings for Jordan domains.  It is a counterpart of Rodin-Sullivan's theorem on convergence of circle packing mappings to the Riemann mapping in the new setting of discrete conformality.  The proof follows the same strategy that Rodin-Sullivan used by establishing a rigidity result for regular hexagonal triangulations of the plane and estimating the quasiconformal constants associated to the discrete conformal maps.




\end{abstract}

\maketitle

\addtocontents{toc}{\setcounter{tocdepth}{1}}
\tableofcontents



\section{Introduction}

Thurston's conjecture on the convergence of circle packing mappings to the Riemann mapping is a constructive and geometric approach to the Riemann mapping theorem. The conjecture was solved in an important work by Rodin and Sullivan \cite{RS} in 1987. There have been many research works inspired by the work of  Thurston and Rodin-Sullivan since then. This paper addresses  a counterpart of Thurston's convergence conjecture in the setting of discrete conformal change of polyhedral surfaces associated to the notion of vertex scaling (Definition \ref{dc}). We prove a weak version of Rodin-Sullivan's theorem in this new setting.  There are still many problems to be resolved in order to prove the full convergence conjecture.

Let us begin with a recall of Thurston's conjecture and Rodin-Sullivan's solution.  Given a bounded simply connected domain $\Omega$ in the complex plane $\C$, one constructs a sequence of  approximating triangulated polygonal disks $(D_n, \T_n)$ whose triangles are equilateral and edge lengths of the triangles tend to zero such that $D_n$ converges to $\Omega$. For each such polygonal disk, by the Koebe-Andreev-Thurston's existence theorem, there exists a circle packing of the unit disk $\D$ such that the combinatorics (or the nerve) of  circle packing is isomorphic to the 1-skeleton of the triangulation $\T_n$. This produces a piecewise linear homeomorphism $f_n$, called the circle packing mapping,  from the polygonal disk $D_n$ to a polygonal disk inside $\D$ associated to the circle packing. Thurston conjectured in 1985  that, under appropriate normalizations, the sequence $\{f_n\}$ converges uniformly on compact subsets of $\Omega$ to the Riemann mapping for $\Omega$.  Here the normalization condition is given by choosing a point $p \in \Omega$,  a sequence of vertices $v_n$ in $(D_n, \T_n)$ such that $\lim_n v_n =p$, and $f_n(v_n) =0$ such that $f_n'(v_n)>0$.  The Riemann mapping $f$ for $\Omega$ sends  $p$ to $0$ and $f'(p)>0$.  Rodin-Sullivan's proof of Thurston's conjecture is elegant and goes in two steps. In the first step, they show that the circle packing mappings $f_n$ are $K$-quasiconformal for some constant $K$ independent of the indices. In the second step, they show  that there is only one hexagonal circle packings of the complex plane up to Moebius transformations. This implies that the limit of the sequence $\{f_n\}$ is conformal.

Circle packing metrics introduced by Thurston \cite{th} can be considered as a discrete conformal geometry of polyhedral surfaces. In recent times, there have been  many works on discretization of 2-dimensional conformal geometry (\cite{luo}, \cite{bps}, \cite{hersonsky1}, \cite{glsw}, \cite{glickenstein}, and others). In this paper, we consider the counterpart of Thurston's conjecture in the setting of discrete conformal change defined by vertex scaling. 

To state our main results, let us recall some related material and notations.
A compact topological surface $S$ together with a
non-empty finite subset of points $V \subset S$ will be called a
\it marked surface\rm. A triangulation $\T$ of a marked surface
$(S,V)$ is a topological triangulation of $S$ such that the vertex
set of $\T$ is $V$. We use $E=E(\T)$, $V=V(\T)$ to denote the sets
of all edges and vertices in $\T$ respectively. A \it
polyhedral metric \rm $d$ on $(S,V)$, to be called a \it PL metric
\rm on $(S,V)$ for simplicity, is a flat cone metric on $(S,V)$
whose cone points are contained in $V$.  We call the triple
$(S,V,d)$ a polyhedral surface. The \it discrete curvature\rm,  or
simply \it curvature\rm, of a PL metric $d$ is the function $K: V
\to (-\infty, 2\pi)$ sending an interior vertex $v$ to $2\pi$ minus the cone
angle at $v$ and a boundary vertex $v$ to $\pi$ minus the sum of angles at $v$.
All PL metrics are obtained by isometric gluing of Euclidean
triangles along pairs of edges.  If $\T$ is a triangulation of a
polyhedral surface $(S,V, d)$  for which all edges in $\T$ are
geodesic, we say $\T$ is \it geometric \rm in $d$ and $d$ is a PL
metric on $(S, \T)$. In this case, we can represent $d$ by the
length function $l_d: E(\T) \to \R_{>0}$ sending each edge to its
length. Thus the polyhedral surface $(S, V, d)$ can be represented
by $(S, \T, l_d)$ where $l_d \in \R_{>0}^E$. We will also call
$(S, \T, l_d)$ or  $l_d$ a PL metric on $\T$.
\begin{definition} \label{dc}(Vertex scaling change of PL metrics \cite{luo})  Two PL metrics $l$ and $l^*$ on a triangulated surface
$(S, \T)$ are related by a \emph{vertex scaling} if there exists a map $w: V(\T) \to \R$ so that if $e$ is an edge in $\T$ with end points
$v$ and $v'$, then the edge lengths $l(e)$ and $l^*(e)$
are related by
\begin{equation}\label{conformalll}
l^*(e) = e^{ w(v)+w(v')} l(e).
\end{equation}  We denote $l^*$ by $w*l$ if (\ref{conformalll}) holds and  call $l^*$  obtained from $l$ by a vertex scaling and $w$ a discrete conformal factor.
\end{definition}
Condition (\ref{conformalll}) was proposed in \cite{luo} as a discrete conformal equivalence between PL metrics on triangulated surfaces. There are three basic problems related to the vertex scaling. The first is the existence problem. Namely, given a PL metric $l$ on a triangulated closed surface $(S, \T)$ and a function $K: V(\T) \to (-\infty, 2\pi)$ satisfying the Gauss-Bonnet condition, is there a PL metric $l^*$ of the form $w*l$ whose curvature is $K$?  Unlike Koebe-Andreev-Thurston's theorem which guarantees the existence of circle packing metrics, the answer to the above existence problem is negative in general. This makes the convergence of discrete conformal mappings a difficult problem.  On the other hand,  the uniqueness of the vertex scaled PL metric  $l^*$ with prescribed curvature holds. This was  established in an important paper by Bobenko-Pinkall-Springborn \cite{bps}.  The third is the convergence problem. Namely, assuming  the existence of PL metrics with prescribed curvatures, can these discrete conformal polyhedral surfaces approximate a given Riemann surface?  The main result of the paper gives a solution to the convergence problem for the simplest case of Jordan domain.


The convergence theorem that we proved is the following.
Let $\Omega$ be a Jordan domain with three points $p,q,r$ specified in the boundary.
By  Caratheodory's extension theorem
\cite{pomo},  the Riemann mapping
from  $\Omega$ to the unit disk $\D$  extends to a homeomorphism from the closure
$\overline{\Omega}$ to the closure $\overline{\D}$. Therefore, there
exists a unique homeomorphism $g$ from $\overline{\Omega}$  to  an equilateral Euclidean triangle $\Delta ABC$ with vertices $A,B,C$  such that  $p,q,r$ are sent to $A,B,C$  and $g$ is conformal in $\Omega$. For simplicity, we call $g$ and  $g^{-1}$
the  Riemann mappings for $(\Omega, (p,q,r))$.

Given an oriented triangulated polygonal disk $(D, \T, l)$ and
three boundary vertices $p,q,r \in V$,  suppose there exists a  PL metric $l^*=w*l$ on $(D, \T)$ for some $w: V \to \R$ such that
its discrete curvature at all vertices except $\{p, q, r\}$ are zero and the curvatures at $p, q, r$ are $\frac{2\pi}{3}$.  Then the associated flat metric on $(D, \T, l^*)$ is isometric to an equilateral triangle $\Delta ABC$, i.e., there is a geometric triangulation $\T'$ of $\Delta ABC$ such that $(\Delta ABC, \T', l_{st})$ is isometric to $(D, \T, l^*)$.  Here   and below, if $\T$ is a geometric triangulation of a domain in the plane,  we use $l_{st}: E(\T) \to \R$ to denote the length of edges $e$ in $\T$ in the standard metric on $\C$.
 Let $f: D \to \Delta ABC$ be the piecewise linear orientation preserving homeomorphism sending $V$ to the vertex set $V(\T')$ of $\T'$, and $p, q, r$ to $A, B, C$ respectively  and being  linear on each triangle of $\T$.  We call $f$ the \it discrete uniformization map \rm associated to
$(D, \T, l, \{p,q,r\})$.  Note that $f$ may not exist due to the lacking of existence theorem.

%
%

\begin{theorem}\label{conv} Suppose $\Omega$ is a Jordan domain in the complex
plane with three distinct points $p,q,r \subset \partial \Omega$.
 Then there exists a sequence $(\Omega_{n},
\T_n, l_{st}, (p_n, q_n, r_n))$ of simply connected triangulated
polygonal disks in $\C$
 where $\T_n$ are
triangulations by equilateral triangles and $p_n, q_n, r_n$ are
three boundary vertices such that

(a) $\Omega=\cup_{n=1}^{\infty} \Omega_n$ with $\Omega_n \subset
\Omega_{n+1}$,  and  $\lim_n p_n =p$, $\lim_n q_n =q$ and $\lim_n r_n=r$,

(b)  discrete uniformization maps associated to $(\Omega_{n},
\T_n, l_{st}, (p_n, q_n, r_n))$  exist and converge uniformly to the
 Riemann mapping for
 $(\Omega, (p,q,r))$.
\end{theorem}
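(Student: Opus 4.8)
The plan is to follow the Rodin-Sullivan strategy faithfully, in the two-step form indicated by the abstract. The first step is to \emph{construct} the approximating polygonal disks $(\Omega_n, \T_n, l_{st}, (p_n,q_n,r_n))$. I would fix a regular hexagonal triangulation of $\C$ with mesh size $\eps_n \to 0$, take $\Omega_n$ to be the union of all closed triangles of the $\eps_n$-grid contained in $\Omega$ (possibly after a standard simplicial filling to make it a topological disk), and choose $p_n, q_n, r_n$ to be boundary vertices of $\Omega_n$ closest to $p,q,r$. Property (a) is then routine point-set topology: $\Omega_n \subset \Omega_{n+1}$ after passing to a nested subsequence of mesh sizes, $\cup_n \Omega_n = \Omega$ by exhaustion, and $p_n \to p$ etc. The existence of the discrete uniformization map $f_n$ for each $n$ — which Theorem~\ref{conv} asserts — is precisely the first genuine analytic input: one must verify that the prescribed curvature data ($0$ at interior and boundary vertices except $\frac{2\pi}{3}$ at $p_n,q_n,r_n$) is realized by some $w_n * l_{st}$. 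I expect this to be handled by a variational/convexity argument on the discrete conformal energy (the functional of Bobenko-Pinkall-Springborn), using that for a \emph{hexagonal} PL metric the relevant energy is well-behaved; note the target triangle case is the discrete analogue of the Schwarz-Christoffel-type uniformization, and the equilateral symmetry of $\T_n$ is what makes the argument go through without the general existence theorem (which, as the introduction stresses, fails).

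The second step is the uniform quasiconformality estimate: the maps $f_n \colon \Omega_n \to \Delta ABC$ are $K$-quasiconformal with $K$ independent of $n$. Here the key ingredient should be a \emph{discrete conformal analogue of the Rodin-Sullivan hexagonal rigidity lemma}: if one discretely conformally deforms a large piece of the regular hexagonal triangulation of the plane while keeping all interior curvatures zero, then on a fixed compact subpiece the conformal factors $w_n(v)$ become asymptotically constant, i.e.\ the deformed metric is asymptotically a scaled copy of the original. The precise statement I would aim for is: for any $\delta>0$ there is an $N$ so that if $T_N^{\text{hex}}$ is the hexagonal triangulation of an $N$-ball and $w * l_{st}$ has zero curvature at all interior vertices, then the ratio of edge lengths $l^*(e)/l^*(e')$ for adjacent edges $e,e'$ in the central unit-ball region lies in $[1-\delta, 1+\delta]$; equivalently, on triangles near the center the image under the associated PL map has angles within $\delta$ of $\pi/3$. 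This rigidity, transported via a compactness/normal-families argument (rescale $f_n$ near a point $x_0\in\Omega$ so the mesh is unit, extract a limit deformation of the infinite hexagonal lattice, and invoke the rigidity for the \emph{infinite} lattice), yields that the angle distortion of $f_n$ at triangles of diameter $\le \eps_n$ near $x_0$ tends to zero as the "local combinatorial radius" grows, which gives a uniform bound on the linear (hence quasiconformal) distortion away from the boundary, and one absorbs the three curvature-$\frac{2\pi}{3}$ corner vertices the way Rodin-Sullivan absorb the finitely many non-hexagonal circles.

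Given uniform $K$-quasiconformality, the endgame is standard. The family $\{f_n\}$ (extended arbitrarily to $\Omega$, or viewed on compact subsets) is precompact in the topology of local uniform convergence by the equicontinuity of normalized $K$-qc maps; since $f_n$ maps $\Omega_n$ onto the fixed triangle $\Delta ABC$ with $p_n,q_n,r_n \mapsto A,B,C$, any subsequential limit $f$ is a $K$-qc homeomorphism $\Omega \to \Delta ABC$ with $p,q,r \mapsto A,B,C$ (using the boundary convergence from (a) and Caratheodory). Then the rigidity step upgrades this: because the local quasiconformal dilatation of $f_n$ tends to $1$ on every compact subset (the angle distortion $\to 0$), the Beltrami coefficient of the limit $f$ vanishes a.e., so $f$ is conformal, hence \emph{is} the Riemann mapping $g$ for $(\Omega,(p,q,r))$ by the uniqueness of the normalized conformal map to the equilateral triangle. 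Since the limit is the same for every subsequence, the full sequence $f_n$ converges uniformly on compact subsets to $g$, and with the boundary control one gets uniform convergence on $\overline{\Omega}$ in the appropriate sense.

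The main obstacle, exactly as in Rodin-Sullivan, is the hexagonal rigidity lemma — here in the \emph{vertex-scaling} rather than circle-packing setting. Unlike circle packings, where rigidity of the hexagonal packing follows from Schramm's or the ring-lemma arguments, here one must show that a zero-curvature discrete conformal deformation of the infinite regular hexagonal lattice is a global scaling; one needs a discrete Liouville-type statement for the discrete conformal factor $w$, presumably via discrete harmonicity/convexity of $w$ coming from the zero-curvature equations together with a discrete maximum principle or a Harnack-type inequality on the hexagonal lattice, plus a "ring lemma" controlling edge-length ratios in a zero-curvature hexagonal patch. Establishing that quantitative ring lemma and the attendant equicontinuity of $w_n$ — i.e.\ ruling out degeneration of triangle shapes under zero-curvature vertex scaling on the hexagonal lattice — is where the real work lies, and it is presumably why the theorem proves only a "weak version" (special combinatorics, assumed existence) of the full convergence conjecture.
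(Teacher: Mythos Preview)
Your overall architecture---construct equilateral approximations, establish existence, get a uniform $K$-qc bound, invoke hexagonal rigidity to force conformality of the limit, then conclude via normal families plus Carath\'eodory/Rado---matches the paper's. But the existence step is a genuine gap, and it is where most of the paper's work actually lies.

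You propose to handle existence by a variational/BPS convexity argument, leaning on the equilateral symmetry. The paper does \emph{not} do this, and there is no indication that such an argument goes through: the introduction stresses precisely that, unlike Koebe--Andreev--Thurston, the vertex-scaling existence problem has a negative answer in general, and nothing about a hexagonal combinatorial type rescues the BPS functional from the boundary of its domain. Instead the paper proves a dedicated existence theorem (Theorem~\ref{1.1}) that applies only after passing to a sufficiently fine \emph{standard subdivision} $\T_{(n)}$ of the equilateral triangulation. The construction is a two-stage ODE flow: first a local conformal factor $w^{(1)}$, built from explicit model solutions on subdivided equilateral triangles (Theorem~\ref{1.2}, a discrete analogue of $z\mapsto z^{3\alpha/\pi}$), diffuses each nonzero corner curvature onto a thin combinatorial sphere of radius $[n/3]$ with curvature $O(1/\sqrt{\ln n})$; then a global perturbation $w^{(2)}$, obtained by solving a curvature-prescription ODE and controlled by gradient estimates for discrete harmonic functions (Lemmas~\ref{1.3}, \ref{891}, Proposition~\ref{311}), kills the residual small curvature. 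Your construction of $\Omega_n$ as a union of grid triangles is fine, but you are missing the subdivision step entirely, and with it the mechanism that makes existence provable.

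A second, smaller divergence: you derive the uniform $K$-qc bound from a ring-lemma/compactness argument. In the paper the $K$-bound is not obtained this way; it falls out directly from the existence construction, since Theorem~\ref{1.1}(c) gives explicit uniform angle bounds $\theta\in[\epsilon_0,\pi/2+\epsilon_0]$ on every triangle of $(\T_{(n)},w_n*l_{st})$. The ratio lemma (Lemma~\ref{ratio}) and hexagonal rigidity (Theorem~\ref{rigidity}, Lemma~\ref{hex}) are then used only for the \emph{second} purpose you name---showing the dilatation tends to $1$ on compacta so the limit is conformal---and the endgame is finished with the Rado--Palka theorem. Finally, your sketch of the rigidity proof (discrete harmonicity plus a maximum principle/Harnack) is only half the story: the paper's maximum principle reduces the problem to \emph{linear} conformal factors, but those are disposed of by a separate geometric argument about spiral hexagonal triangulations (Proposition~\ref{linear}), the vertex-scaling analogue of Doyle spirals, which shows that nontrivial linear factors produce non-embeddable developing maps.
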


In Rodin-Sullivan's convergence theorem, any sequence of
approximating circle packing maps associated to the approximation
triangulated polyhedral disks $\Omega_n$ such that $\Omega_n \subset
int(\Omega_{n+1})$ and $\Omega =\cup_{n} \Omega_n$ converges to
the Riemann mapping. Theorem \ref{conv} is less robust in this
aspect since  discrete conformal maps may not exist if the triangulations $\T_n$ are not carefully selected.
A stronger version of convergence is conjectured in \S7.
The conformality of the limit of the discrete conformal maps in Theorem
\ref{conv} is a consequence of 
the following result.  
Recall that a geometric triangulation $\T$ of polyhedral surface is called \it Delaunay \rm if the
sum of two angles facing each interior edge is at most $\pi$. Delaunay
triangulations always exist for each PL metric on compact surfaces.

\begin{theorem} \label{rigid}  Suppose $\T$ is a Delaunay geometric triangulations of the complex plane $\C$ such that its vertex set is a lattice and $l_{st}: E(\T) \to \R$ is the edge length function of $\T$.  If  $(\C, \T, w*l_{st})$ is a Delaunay triangulated surface isometric to an open set in the Euclidean plane $\C$,  then $w$ is a constant function.  \rm
\end{theorem}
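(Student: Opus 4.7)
The plan is to convert the non-linear flatness condition $K^{*}_v(w) = 0$ for all $v \in V(\T)$ into a linear discrete elliptic equation $\bar{L} w = 0$ on the lattice $V(\T)$, and then to conclude by a discrete Liouville-type theorem. The Delaunay hypothesis supplies the non-negativity of weights (ellipticity) of $\bar{L}$, while the isometric embedding hypothesis is used to bound $w$.

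For the linearization, interpolate between the two flat metrics $l_{st}$ and $w * l_{st}$ via $w_t := tw$, $t \in [0,1]$. By the variational formula for vertex scaling (due to Luo and to Bobenko-Pinkall-Springborn),
\[
\frac{d}{dt}K^{(t)}_v \;=\; -(L_t w)(v),
\]
where $L_t$ is the cotangent Laplacian of the intermediate PL metric $w_t * l_{st}$: the weight on an edge $e$ is $\cot \alpha^{(t)}_e + \cot \beta^{(t)}_e$, the cotangents of the two angles opposite to $e$ in the adjacent triangles. Since $K^{(0)} \equiv 0$ (original metric is flat) and $K^{(1)} \equiv 0$ (by hypothesis on $w$), integration in $t$ yields $(\bar{L}w)(v) = 0$ for every $v$, with $\bar{L} := \int_0^1 L_t\,dt$. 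The Delaunay condition at the two endpoint metrics, combined with the convexity properties of Delaunay cells in vertex-scaling parameter space, should ensure that the edge weights of $\bar{L}$ are non-negative. Moreover, the lattice structure of $V(\T)$ gives only finitely many congruence classes of triangles, so these weights are uniformly bounded above and bounded below away from zero, making $\bar{L}$ a uniformly elliptic discrete Laplacian on $V(\T) \cong \Z^2$.

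For a uniformly elliptic, non-negatively weighted Laplacian on $\Z^2$, the discrete Harnack inequality (equivalently, the recurrence of the associated random walk in two dimensions) implies that every bounded $\bar{L}$-harmonic function is constant. It therefore remains to establish that $w$ is bounded. For this, I would use the isometric embedding $\Phi : (\C, \T, w * l_{st}) \to U \subset \C$ guaranteed by hypothesis: if $w(v_n)$ were unbounded along some vertex sequence $v_n \in V(\T)$, the edges incident at $v_n$ in the new metric would have lengths tending either to $\infty$ (if $w(v_n) \to +\infty$) or to $0$ (if $w(v_n) \to -\infty$), and in either case the images under $\Phi$ of the combinatorial neighborhoods of $v_n$ cannot be consistent with $\Phi$ being a homeomorphism onto an open subset $U$ of $\C$.

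The two principal obstacles of the plan are these: first, rigorously establishing the non-negativity of the weights of $\bar{L}$, which requires either showing that the straight-line path $t \mapsto w_t$ stays inside a single Delaunay cell, or more generally that the time-averaged cotangent sum is non-negative on each edge --- this is a subtle convexity question for the vertex-scaling parameter space, and in the worst case one may need to replace the straight segment by a concatenation of paths compatible with intermediate edge flips and track the resulting cancellations; second, quantitatively deriving the boundedness of $w$ from the isometric embedding $\Phi$, a geometric argument that demands careful handling of the lattice combinatorics of $\T$ together with an analysis of the developing map in the scaled metric. The first obstacle is the more serious one, as without the non-negativity of $\bar L$ the whole maximum-principle strategy collapses.
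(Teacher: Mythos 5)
Your linearization strategy is genuinely different from the paper's proof, but it has several gaps that prevent it from going through as written, and the most serious one is exactly the one you flag.

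The first gap is the non-negativity of the averaged weights $\bar\eta_e = \int_0^1 (\cot\alpha_e^{(t)} + \cot\beta_e^{(t)})\,dt$. The Delaunay condition is only given at $t=0$ and $t=1$; the region $\{w : w*l_{st} \text{ is Delaunay on } \T\}$ is cut out by the conditions $\cot\alpha_e+\cot\beta_e\ge 0$, which are not convex in $w$, so there is no reason for the straight segment $tw$ to stay inside it. Worse, triangles may actually degenerate (triangle inequality may fail) along the path, so $L_t$ need not even be defined for all $t$. Working around this with intermediate edge flips would change the very triangulation $\T$ on which $\bar L$ lives, so the ``cancellations'' you mention do not reduce to a clean operator equation.

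The second gap is your assertion that ``the lattice structure of $V(\T)$ gives only finitely many congruence classes of triangles, so these weights are uniformly bounded above and bounded below away from zero.'' That is true for the initial metric $l_{st}$, which has two congruence classes, but it is false for $w*l_{st}$ (and for all intermediate metrics), since $w$ varies from vertex to vertex; there are in general infinitely many similarity types. So uniform ellipticity is not automatic; it would have to be extracted from some a priori control on the geometry of $w*l_{st}$, which is precisely what is not yet available at this stage.

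The third gap is the boundedness of $w$. Your intended argument --- that an unbounded $w$ forces arbitrarily long or arbitrarily short edges, which is ``inconsistent'' with $\Phi$ being a homeomorphism onto an open set --- does not by itself give a contradiction: an open subset of $\C$ can contain arbitrarily small and arbitrarily large embedded triangles (e.g.\ accumulating near a boundary point, or running off to infinity). The embedding hypothesis does constrain $w$, but turning that constraint into an $L^\infty$ bound on $w$ already requires something like the paper's Lemma~\ref{ratio} (the ratio lemma), which bounds the quantities $w(v+\delta)-w(v)$ rather than $w$ itself. In fact $w$ is a priori only Lipschitz, not bounded, and the paper never proves $w$ bounded; instead it translates, passes to a limit (Lemma~\ref{6.3}), and reduces to a discrete conformal factor that is exactly affine-linear, which is then excluded by the explicit analysis of spiral hexagonal triangulations (Proposition~\ref{linear}) together with the embeddability obstruction (Lemma~\ref{embdev}). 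That path bypasses both the linearization and the Liouville theorem entirely, replacing them with a nonlinear maximum principle (Theorem~\ref{max}) and a normalization-at-infinity argument in the spirit of Rodin--Sullivan. Your outline would need solutions to all three gaps to compile into a proof, and the first one in particular appears to require a new idea rather than a technical fix.
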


We remark that the same result as above for the standard hexagonal lattice has been proved independently by Dai-Ge-Ma \cite{dai-ge} in a recent preprint.

Using an important result in \cite{bps} that vertex scaling is closely related to hyperbolic 3-dimensional geometry and the work of \cite{glsw}, one sees that Theorem \ref{rigid} implies the following rigidity result on convex hyperbolic polyhedra.

\begin{theorem} \label{rigid}  Suppose $L =\Z+\tau \Z$ is a lattice in
the plane $\C$ and $V \subset \C$ is a discrete set such that
there exists an isometry between the boundaries of the convex
hulls of $L$ and $V$ in the hyperbolic 3-space $\H^3$ preserving
cell structures. Then $V$ and $L$ differ by a complex affine
transformation of $\C$.
\end{theorem}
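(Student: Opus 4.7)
The plan is to reduce the statement to the planar rigidity result (the first Theorem \ref{rigid}) via the correspondence between vertex scalings of Euclidean PL surfaces and decorated ideal hyperbolic polyhedra developed by Bobenko-Pinkall-Springborn \cite{bps} and Gu-Luo-Sun-Wu \cite{glsw}. Viewing $\C$ as $\partial \H^3$ in the upper half-space model, an Epstein-Penner type argument identifies, for any discrete set $V\subset\C$, the radial projection to $\C$ of the cell decomposition of $\partial C(V)$ with the Delaunay decomposition $\del_V$ of $\C$. Consequently the hypothesized cell-preserving isometry $\phi\colon \partial C(L)\to \partial C(V)$ descends to a combinatorial isomorphism $\phi_*\colon \del_L\to \del_V$.

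Next, decorate each $v\in L$ by the horoball of Euclidean diameter $1$ at $v$, and transport these horoballs along $\phi$ to obtain horoballs at the points $u\in V$ of varying Euclidean diameters $d_u$, which I encode by a function $w\colon V\to\R$ via $d_u = e^{-2w(u)}$. A direct calculation in the upper half-space model gives the decorated edge length formula
\[
\ell([v,v']) = \log\frac{|v-v'|^2}{d_v d_{v'}},
\]
where $\ell$ is the signed hyperbolic distance between horoballs along the geodesic joining their apices. Applied to an edge $[v,v']$ of $\del_L$ this yields $2\log|v-v'|$; applied to $[u,u']=\phi_*([v,v'])$ in $\del_V$ it yields $2\log|u-u'|+2w(u)+2w(u')$. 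Since $\phi$ is an isometry these two expressions agree, which rearranges to the vertex-scaling relation $|u-u'| = e^{-(w(u)+w(u'))}|v-v'|$ of Definition \ref{dc}.

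Refine $\del_L$ to a geometric Delaunay triangulation $\T$ of $\C$ with vertex set $L$ (if a Delaunay cell is not a triangle, triangulate it arbitrarily; since its vertices are concyclic, each such subdivision is Delaunay). Via $\phi_*$ this produces a triangulation $\T'$ of $\C$ with vertex set $V$, still Delaunay because its cells come from the Delaunay cell structure of $\partial C(V)$. Pulling $-w$ back to $L$ along $\phi_*^{-1}$ produces a PL metric $(\C,\T,\hat w*l_{st})$ with $\hat w:=-w\circ\phi_*^{-1}$, which is isometric via $\phi_*$ to $(\C,\T',l_{st})$ and hence to an open subset of Euclidean $\C$; both $\T$ and $\T'$ are Delaunay in their respective metrics. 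The first Theorem \ref{rigid} then forces $\hat w$, and hence $w$, to be constant. Consequently $|u-u'|=\mu|v-v'|$ for a single positive constant $\mu$ on every edge of $\T$, and by connectivity of the $1$-skeleton on every pair of lattice points. So $V$ is the image of $L$ under a Euclidean similarity; the orientation-preserving case (the one consistent with $\phi$ transporting the ambient orientation on $\partial\H^3$ via the radial projection) produces the desired map $z\mapsto \alpha z+\beta$.

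The hardest step will be the second paragraph: verifying the BPS/GLSW dictionary for this non-cocompact infinite setting. One must confirm that the cell structure of $\partial C(V)$ really agrees with $\del_V$, that the horoballs transferred by $\phi$ are globally well-defined on $V$ and so determine $w$ consistently, and that the decorated length identity holds uniformly on every edge. Once those ingredients are in hand, the remainder is a direct appeal to the planar rigidity result already established.
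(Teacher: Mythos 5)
Your proposal is correct and takes essentially the approach the paper intends: reduce to the planar rigidity theorem via the Bobenko--Pinkall--Springborn/Gu--Luo--Sun--Wu dictionary identifying $\partial C_{\H}(V)$ with the hyperbolic surface built from the Delaunay decomposition of $V$, with horoball decorations encoding vertex-scaling factors. The paper never writes out this deduction explicitly (it is asserted in the introduction and the relevant dictionary is only sketched in \S 7), so your write-up correctly fills in the details, including the decorated-length formula $\ell=\log\bigl(|v-v'|^2/(d_v d_{v'})\bigr)$ and the refinement of non-simplicial Delaunay cells to a Delaunay triangulation.
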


This prompts us to propose the following conjecture. A closed set
$X$ in the Riemann sphere is said to be of \it circle type \rm if
each connected component of $X$ is either a point or a round disk.
Consider the Riemann sphere $\C \cup\{\infty\}$ as the infinity of
the (upper-half-space model of) hyperbolic 3-space $\H^3$.

\begin{conj}\label{c1}
For any genus zero connected complete
hyperbolic surface $\Omega$, there exists a circle type closed set
$X \subset \C\cup \{\infty\}$ such that $\Omega$ is isometric to
the boundary of the convex hull of $X$ in $\H^3$. \end{conj}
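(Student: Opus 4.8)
The plan is to treat Conjecture~\ref{c1} as the existence counterpart of the rigidity Theorem~\ref{rigid} and to prove it by the standard triad \emph{exhaustion--compactness--rigidity}, lifting the finite-dimensional realization theory for convex polyhedra in $\H^3$ to the limit. Since $\Omega$ has genus zero, it is homeomorphic to an open subset of $S^2$, so one may write $\Omega=\bigcup_n\Omega_n$ with $\Omega_n$ a compact hyperbolic surface with piecewise geodesic boundary and $\Omega_n\subset\Omega_{n+1}$; after a small adjustment and repeated subdivision one may assume $\Omega_n$ carries a Delaunay geodesic triangulation all of whose vertices lie on $\partial\Omega_n$ and whose mesh tends to $0$.

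First I would realize each $\Omega_n$ geometrically. Using Alexandrov--Rivin type realization theorems for (ideal and hyperideal) convex polyhedra in $\H^3$ -- equivalently, via the dictionary of \cite{bps} and \cite{glsw} relating vertex scaling of PL metrics to convex hulls in hyperbolic $3$-space -- one looks for a finite set $V_n\subset\C\cup\{\infty\}$ whose convex hull $\mathrm{CH}(V_n)$ has the prescribed triangulation as (the relevant portion of) its combinatorial boundary and is isometric to $\Omega_n$. Already here care is needed, since the paper notes that the prescribed-curvature problem for vertex scaling need not be solvable; the triangulations and the realization model would have to be chosen with the same care as in the proof of Theorem~\ref{conv}.

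Next, normalize and pass to a limit: fixing one triangle of the triangulation of $\Omega_1$ and post-composing by M\"obius transformations, anchor the corresponding face of each $\partial\mathrm{CH}(V_n)$ to a fixed totally geodesic plane in $\H^3$. One then wants to show that along a subsequence $V_n$ converges in the Hausdorff metric to a closed set $X\subset\C\cup\{\infty\}$, that $\partial\mathrm{CH}(V_n)\to\partial\mathrm{CH}(X)$ in the pointed Gromov--Hausdorff sense with the induced path metric on the limit complete and isometric to $\Omega$, and finally that $X$ is of circle type.

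The last point is where I expect the real difficulty -- which is why this is posed as a conjecture. One must exclude the degenerations: vertices of $V_n$ colliding into an uncontrolled accumulation set; the convex hull flattening onto a lower-dimensional set on part of the limit, destroying the metric; and complementary components of $X$ failing to be round. The rigidity theorems of this paper are exactly the tool that forces roundness in the limit: a non-round complementary component of $X$ would, under a suitable rescaling near a generic boundary point, be modelled on a configuration excluded by the lattice version of Theorem~\ref{rigid} and its convex-polyhedron reformulation. Excluding collision and flattening, however, seems to require uniform a priori geometric bounds -- an analogue of the Rodin--Sullivan ring lemma \cite{RS} and of the length--area estimates used by He and Schramm in their proof of the Koebe uniformization conjecture -- showing that the Delaunay condition on the fine triangulations propagates uniform non-degeneracy of the vertex configurations. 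Establishing this compactness, rather than supplying the rigidity input, is the main obstacle.
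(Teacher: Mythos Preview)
The statement you are addressing is a \emph{conjecture}, and the paper does not prove it. There is no ``paper's own proof'' to compare against. What the paper does provide is context (\S7.2): it records that the conjecture is known in the finite-area case (Rivin) and the finitely-many-round-disks case (Schlenker), that it holds for surfaces with countably many ends by combining He--Schramm with the authors' work \cite{lw}, and most importantly that \cite{lw} establishes the equivalence of Conjecture~\ref{c1} with the full Koebe circle-domain conjecture. So a complete proof along your lines would in particular resolve Koebe's conjecture; you should calibrate your expectations accordingly.

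Your exhaustion--realization--limit outline is a plausible heuristic, and you are right that the compactness step (uniform nondegeneracy of the finite realizations) is the crux. But one specific point is off: you propose to use Theorem~\ref{rigid} to force the complementary components of the limit set $X$ to be round disks. Theorem~\ref{rigid} is a rigidity statement for vertex-scaled \emph{lattice} Delaunay triangulations of~$\C$; it says nothing about the shape of accumulation sets of point configurations or about roundness of complementary domains. The rescaling argument you sketch (blow up near a generic boundary point of a non-round component and get a contradiction with the lattice rigidity) does not go through, because after rescaling you do not land on a lattice configuration, and the Delaunay/vertex-scaling hypotheses of Theorem~\ref{rigid} are not inherited by such a blow-up. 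Roundness in the known partial results comes instead from conformal/quasiconformal rigidity in the He--Schramm framework, not from the polyhedral rigidity of this paper.

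In short: there is no proof here to match, your strategic outline is reasonable as a research program, but the invocation of Theorem~\ref{rigid} for the circle-type conclusion is a genuine gap, and the compactness step you flag is, as far as is known, open in the generality needed.
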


\begin{conj}\label{c2} Suppose $X$ and $Y$ are circle type closed sets in $\C$ such that
boundary of the convex hulls of $X$ and $Y$ in $\H^3$ are isometric. Then $X$ and $Y$ differ by a M\"obius transformation.
\end{conj}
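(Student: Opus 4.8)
The plan is to deduce Conjecture \ref{c2} from a polyhedral case by approximation, and to treat the polyhedral case by combining the uniqueness of vertex scaling (Bobenko--Pinkall--Springborn \cite{bps}) with a maximum principle of the type behind Theorem \ref{rigid}. Write $\Omega_X$ and $\Omega_Y$ for the complements of $X$ and $Y$ in $\C \cup \{\infty\}$, let $\Sigma_X = \partial(\mathrm{Hull}(X)) \cap \H^3$ and $\Sigma_Y = \partial(\mathrm{Hull}(Y)) \cap \H^3$ be the two domes, and let $\phi\colon \Sigma_X \to \Sigma_Y$ be the given isometry. Each dome carries a canonical decomposition into totally geodesic ``flat faces'' --- the maximal pieces on which the dome is locally a geodesic plane, lying over the round--disk components of $X$ and, after fixing a Delaunay ideal triangulation, over the complementary ideal polygons --- meeting along geodesic edges, with one ideal vertex over each point component of $X$. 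The three steps are: (i) show that $\phi$ carries the decomposition of $\Sigma_X$ to that of $\Sigma_Y$; (ii) in the finite case conclude, via the correspondence between vertex scaling and hyperbolic polyhedra used in \cite{bps} and \cite{glsw}, that $\Sigma_X$ and $\Sigma_Y$ are congruent in $\H^3$, so $X$ and $Y$ differ by a M\"obius transformation; and (iii) reduce the general case to the finite one by a normal families argument.

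For step (ii), recall from \cite{bps} that vertex scaling is governed by the geometry of the light cone in $\R^{2,1}$, so that a circle--type set with finitely many components determines, from the combinatorics of its dome together with the induced metrics on the flat faces, a PL metric on a triangulated disk well defined up to the vertex scaling of Definition \ref{dc}; the curvatures at the ideal vertices and the edge lengths around the disk boundaries furnish the prescribed boundary data. The uniqueness theorem of \cite{bps} --- a PL metric in a prescribed conformal class with prescribed curvature is unique --- then says that two domes of the same combinatorial type with isometric induced metrics are congruent in $\H^3$. To drop the assumption of equal combinatorial type one argues as in the proof of Theorem \ref{rigid}: the conformal factor $w$ relating the two associated PL metrics is discrete harmonic away from the marked vertices, and convexity of the hull together with the Delaunay condition forces $w$ to be constant by a maximum principle; connectedness of the deformation space of combinatorial types then promotes this to the global statement. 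In the non--compact directions --- for instance when $X$ contains a full lattice --- this step is precisely the content of Theorem \ref{rigid}, which would be invoked directly.

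For step (iii), approximate $X$ from inside by finite circle--type sets $X_n$ (retain finitely many components, round them off, and slightly fatten points to tiny disks) so that $\Sigma_{X_n} \to \Sigma_X$ in the pointed Gromov--Hausdorff sense; $\phi$ then induces near isometries between $\Sigma_{X_n}$ and matching finite approximations $Y_n$ of $Y$, the M\"obius transformations $M_n$ produced in step (ii) form a normal family once normalized at three boundary points, and a subsequential limit $M$ satisfies $M(X) = Y$. \textbf{The main obstacle}, and the reason the statement is only conjectured, lies in steps (i) and (iii). An abstract isometry of hyperbolic surfaces need not respect the bending locus --- a geodesic plane can be bent along many different geodesics without changing its intrinsic metric --- so the canonicity asserted in (i) must be extracted from the special geometry of circle--type domes rather than postulated; and once the complementary components of $X$ accumulate, the combinatorics becomes infinite and the dome non--compact, so both the maximum principle in (ii) and the compactness in (iii) demand the kind of control that in the lattice case is supplied by the hexagonal rigidity arguments behind Theorem \ref{rigid} but is not yet available for general circle--type sets.
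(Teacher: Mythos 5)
This statement is Conjecture \ref{c2}; the paper offers no proof of it, so there is nothing to compare your argument against. The paper only records partial evidence: by Rivin \cite{rivin2} and Schlenker \cite{sch1} the conjecture holds when $X$ is a finite set or a finite disjoint union of round disks, and Theorem \ref{rigid} of the paper is itself a very special rigidity instance (vertex set a lattice). You correctly recognize that what you have written is a strategy rather than a proof, and the obstacles you flag at the end are exactly the ones that make this an open problem.

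To be concrete about where the plan fails as a proof. Step (i) is not a technicality: the hypothesis is only an \emph{intrinsic} isometry $\phi\colon\Sigma_X\to\Sigma_Y$ of complete hyperbolic surfaces, and such a map carries no a priori information about the bending lamination or the face decomposition; extracting extrinsic congruence from intrinsic data is the entire content of the Weyl-type problem here, and no mechanism in the paper (or in your sketch) supplies it for general circle-type sets. Step (ii) as stated also overreaches: the Bobenko--Pinkall--Springborn uniqueness is a statement about vertex scaling on a \emph{fixed} triangulation of a \emph{compact} polyhedral surface, and the maximum principle of Theorem \ref{max} is local; globalizing either to an infinite, non-lattice combinatorial pattern is precisely what Theorem \ref{rigid} achieves only for lattices, using the spiral-triangulation analysis and the limit argument of Lemma \ref{6.3}, neither of which has an analogue for an arbitrary circle-type $X$. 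Step (iii) has a further untreated issue: an intrinsic isometry of the limit surfaces does not induce near-isometries of the truncated approximations $\Sigma_{X_n}$, $\Sigma_{Y_n}$ (cutting off components changes the intrinsic metric non-locally, since the convex hull itself changes), so the normal-families reduction does not get started. In short, your outline is a reasonable description of how one might hope to attack the conjecture, and your closing paragraph accurately diagnoses why it remains open; it should not be presented as a proof.
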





The paper is organized as follows. \S2 recalls the basic material
for discrete conformal geometry of polyhedral surfaces. Sections
3 and 4 are devoted to prove Theorem \ref{rigid}. The main tools
used are a maximum principle, a variational principle for discrete
conformal geometry of polyhedral surfaces and spiral hexagonal
triangulations derived from linear conformal factors. Section \S5
investigates the existence of flat metrics with prescribed boundary curvature on   polygonal disks.
The main result (Theorem 5.1) is an  existence result for vertex scaling equivalence if triangulations of a polyhedral disk are sufficiently fine subdivided.
The basic tools used are discrete harmonic functions, their
gradient estimates and solutions to ordinary differential
equations.  We prove the convergence Theorem \ref{conv} in \S6
using the results obtained in \S4, \S5 and Rado-Palka's theorem on uniform convergence of Riemann mappings and
quasiconformal mappings. \S7 discusses a strong version of the convergence of discrete uniformization maps and the
 motivations for Conjecture
\ref{c1}.

\bigskip
\noindent {\bf Acknowledgement}. We thank Michael Freedman for
discussions which lead to the formulation of Conjectures \ref{c1}, \ref{c2}.   The
work is partially supported by the NSF DMS 1405106,  NSF DMS 1760527, NSF DMS 1811878, NSF DMS 1760471  of USA and a grant from  the NSF of China.


\section{Polyhedral metrics,
vertex scaling and a variational principle}

We begin with some notations. Let $\C$, $\R$, $\Z$ be the sets of
complex, real, and integers respectively. $\R_{>0}=\{ t \in \R|
t>0\}$, $\Z_{\geq k}=\{ n \in \Z| n \geq k\}$ and $\SS^1=\{ z \in
\C| |z|=1\}$. We use $\D$ to denote the open unit disk in $\C$ and
$\H^n$ to denote the $n$-dimensional hyperbolic space.

Given that $X$ is a compact surface with boundary,
its interior is denoted by $int(X)$. A graph with
vertex set $V$ and edge set $E$ is denoted by $(V, E)$. Two
vertices $i, j$ in a graph $(V,E)$ are \it adjacent\rm, denoted
 by $i \sim j$, if they are the end points of an edge. If $i \sim j$,
 we use $[ij]$ (respectively $ij$) to denote an oriented (respectively unoriented) edge from $i$ to $j$.
 An \it edge path \rm joining $i,j \in V$ is
  a sequence of vertices
$\{v_0=i, v_1, ..., v_m=j\}$ such that $v_k \sim v_{k+1}$. The
length of the path is $m$. The \it combinatorial distance \rm
$d_c(i,j)$ between two vertices in a connected graph $(V, E)$ is
the length of the shortest edge path joining $i,j$. Suppose $(S,
\T)$ is a triangulated  surface with possibly non-empty boundary
$\partial S$ and possibly  non-compact $S$. Let $E=E(\T)$,
$V=V(\T)$ be the sets of edges, vertices respectively and
$\T^{(1)}=(V, E)$ be the associated graph.    A vertex $v \in
V(\T) \cap
\partial S$ (resp. $v \in V\cap (S -\partial S))$ is called a
boundary (resp. interior) vertex. Boundary and interior edges are
defined in the same way. A PL metric on  $(S, \T)$ or simply on
$\T$ can be represented by a length function $l: E(\T) \to
\R_{>0}$ so that if $e_i, e_j, e_k$ are three edges forming a
triangle in $\T$, then the \it strict triangle inequality \rm
holds,
\begin{equation}\label{striangle}
l(e_i)+l(e_j) > l(e_k).
\end{equation}

We will use limits of PL metrics.  To this end, we
introduce the notion of \it generalized PL metrics \rm on $(S,
\T)$. Take three pairwise distinct points $v_1, v_2, v_3$ in the
plane. The convex hull of $\{v_1, v_2, v_3\}$ is a \it generalized
triangle \rm with vertices $v_1, v_2, v_3$. We denoted it by  $\Delta v_1v_2v_3$. If
$v_1, v_2, v_3$ are not in a line, then $\Delta v_1v_2v_3$ is a
(usual) triangle. If $v_1,v_2,v_3$ lie in a line, then $\Delta
v_1v_2v_3$ is a \it degenerate triangle \rm with the flat vertex
at $v_i$ if $|v_j-v_i|+|v_k-v_i|=|v_j-v_k|$,
$\{i,j,k\}=\{1,2,3\}$. Let
 $l_i=|v_j-v_k| \in \R_{>0}$ be the \it
edge length \rm and $a_i \in [0, \pi]$ be the \it angle \rm at
$v_i$. Then $l_i+l_j \geq l_k>0$  and the angles are given by
\begin{equation}\label{cosine}
a_i =\arccos(\frac{l_j^2+l_k^2-l_i^2}{2 l_j l_k}).
\end{equation}
Furthermore, the angle $a_i=a_i(l_1, l_2,l_3)\in [0, \pi]$ is
continuous in $(l_1, l_2, l_3)$. Degenerate triangles are
characterized by either having an angle $\pi$ or the lengths
satisfying $l_i=l_j+l_k$ for some $i,j,k$.

A \it generalized PL metric \rm on a triangulated surface $(S, T)$
is represented by an edge length function  $l: E(\T) \to \R_{>0}$
so that if $e_i, e_j, e_k$ are three edges forming a triangle in
$\T$, then the  triangle inequality holds,
\begin{equation}\label{triangle}
l(e_i)+l(e_j) \geq l(e_k).
\end{equation}
We will abuse the use of terminology and call $l$ a generalized PL
metric on $(S, \T)$ or $\T$.
 The
 \it discrete curvature \rm $K: V(\T) \to (-\infty, 2\pi]$ of a
generalized PL metric  $(S,\T, l)$ is defined as follows.
 If $v \in V(\T)$ is an interior
vertex, $K(v)$ is $2\pi$ minus the sum of angles (of generalized
triangles) at $v$; if $v$ is a boundary vertex, $K(v)$ is $\pi$
minus the sum of angles at $v$. Note that the Gauss-Bonnet theorem
$\sum_{v \in V(\T)} K(v)=2\pi\chi(S)$ still holds for a compact
surface $S$ with a generalized PL metric. Clearly the curvature
$K$ and inner angles depend continuously on the length vector
$l\in \R_{>0}^{E(\T)}$. A generalized PL metric is called \it flat
\rm if its curvatures are zero at all interior vertices $v$. A
generalized PL metric $(S, \T, l)$ (or sometimes written as $(\T,
l)$) is called \it Delaunay \rm if for each \it interior \rm edge
$e \in E(\T)$ the sum of the two angles facing $e$ is at most
$\pi$. If $(S, \T, l)$ is a Delaunay generalized PL metric such
that each angle facing a boundary edge is at most $\pi/2$, then
the metric double of $(S, \T, l)$ along its boundary is a Delaunay
triangulated generalized PL metric surface. Two generalized PL
metrics $l$ and $\tilde{l}$ on $(S, \T)$ are related by a \it
vertex scaling \rm if there is $w \in \R^V$ so that
$$ \tilde{l}(vv') =e^{ w(v)+w(v')}l(vv')$$ for all edges $vv' \in E(\T)$.
 We write $\tilde{l} = w*l$ and call $w$ a \it discrete conformal
 factor\rm.

Two generalized triangles $\Delta v_1v_2v_3$ and $\Delta
u_1u_2u_3$ are \it equivalent \rm if there exists an isometry
sending $v_i$ to $u_i$ for $i=1,2,3$. The space of all equivalence
classes of generalized triangles can be identified with $\{ (l_1,
l_2, l_3) \in \R_{>0}^3 | l_i + l_j\geq l_k\}$. It contains the
space of all equivalence classes of triangles $\{ (l_1, l_2, l_3)
\in \R_{>0}^3 | l_i + l_j > l_k\}$. Given two generalized
triangles $l=(l_1, l_2, l_3)$ and $\tilde{l}=(\tilde{l_1},
\tilde{l_2}, \tilde{l_3})$ there exists $w=(w_1, w_2, w_3) \in
\R^3$ such that $\tilde{l_i} =l_i e^{w_j+w_k}.$

The following result was proved in \cite[Theorem 2.1]{luo} for
Euclidean triangles. The extension to generalized triangles is
straight forward.

\begin{proposition}[\cite{luo}] \label{variation} Let $\Delta v_1v_2v_3$ be a
 fixed generalized
triangle of edge length vector $l=(l_1, l_2, l_3)$ and $w*l$ is the edge length vector of a
vertex scaled generalized triangle whose inner angle at $v_i$ is
$a_i=a_i(w)$.  


\noindent (a) For any two constants $c_i, c_j$, the set $\{(w_1,
w_2, w_3) \in \R^3| w*l$ is a generalized triangle and $w_i=c_i$,
$w_j=c_j$\} is either connected or empty.

\noindent (b) If $(\Delta v_1v_2v_3, l)$ is a non-degenerate
triangle and $i,j,k$ distinct,
 then
\begin{equation} \label{12}
\frac{\partial a_i}{\partial w_i}|_{w=0}
=-\frac{\sin(a_i)}{\sin(a_j)\sin(a_k)}<0, \quad \frac{\partial
a_i}{\partial w_j}|_{w=0} =  \frac{\partial a_j}{\partial
w_i}|_{w=0} =\cot(a_k), \quad \text{and} \quad \sum_{j=1}^3
\frac{\partial a_i}{\partial w_j}=0.
\end{equation}
The matrix $-[\frac{\partial a_r}{\partial w_s}]_{3 \times 3}$ is
symmetric, positive semi-definite with null space spanned by
$(1,1,1)^T$.

\noindent (c) If  $(\Delta v_1v_2v_3, l)$  is a degenerate
triangle having $v_3$ as the flat vertex,  then for small $t>0$,
$(\Delta v_1v_2v_3, (0,0,t)*l)$ is a non-degenerate triangle. The
angle $a_3(0,0,t)$ is strictly decreasing in $t$ for all $t$ for
which $(0,0,t)*l$ is a generalized triangle. The angles $a_i(0,0,t
)$, $i=1,2$, are strictly increasing in $t \in [0, \epsilon)$ for
some $\epsilon >0$.



\end{proposition}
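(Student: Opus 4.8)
The plan is to treat part (b) as the analytic core — it is exactly the computation of \cite[Theorem~2.1]{luo} for Euclidean triangles, with no change needed — and to reduce parts (a) and (c), which genuinely touch degenerate configurations, to elementary monotonicity statements about explicit exponential functions. For (b) I would start from the cosine law (\ref{cosine}), written as $l_i^2 = l_j^2 + l_k^2 - 2l_jl_k\cos a_i$, and differentiate at a non-degenerate triangle. Using the projection identities $l_i = l_j\cos a_k + l_k\cos a_j$ and the area formula $2A = l_jl_k\sin a_i$ this collapses to
\begin{equation*}
\frac{\partial a_i}{\partial l_i} = \frac{l_i}{2A}, \qquad \frac{\partial a_i}{\partial l_j} = -\frac{l_i\cos a_k}{2A} \quad \text{for } \{i,j,k\}=\{1,2,3\}.
\end{equation*}
Since the edge $e_r$ (opposite $v_r$) has length $e^{w_p+w_q}l_r$ in $w*l$, at $w=0$ one has $\partial l_r^*/\partial w_s = l_r$ when $s\neq r$ and $=0$ when $s=r$; feeding this into the chain rule and simplifying once more with the projection identity and the sine law $l_r = 2R\sin a_r$ produces precisely the three displayed formulas of (\ref{12}) (the same computation at any non-degenerate $w*l$ gives the derivatives there). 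The symmetry $\partial a_i/\partial w_j = \partial a_j/\partial w_i$ and $\sum_j\partial a_i/\partial w_j = 0$ are read off directly, using $\sin(a_j+a_k)=\sin a_i$ for the vanishing sum. Finally, writing $M = -[\partial a_r/\partial w_s]$ (so $M_{rr} = -M_{rs}-M_{rt}$ by the row-sum relation), a short expansion of the quadratic form gives
\begin{equation*}
x^TMx = \cot(a_1)(x_2-x_3)^2 + \cot(a_2)(x_3-x_1)^2 + \cot(a_3)(x_1-x_2)^2,
\end{equation*}
which is twice the Dirichlet energy $\int_{\Delta v_1v_2v_3}|\nabla u|^2$ of the affine function $u$ on the triangle with vertex values $x_1,x_2,x_3$ (the classical cotangent formula); hence $M$ is positive semi-definite, and $x^TMx=0$ forces $u$ constant, i.e.\ $x$ a multiple of $(1,1,1)^T$.

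For (a), after relabeling I may assume $w_1=c_1$ and $w_2=c_2$ are fixed while $w_3=t$ varies, so that $l_1^* = \alpha e^t$, $l_2^* = \beta e^t$, $l_3^* = \gamma$ with $\alpha,\beta,\gamma>0$ depending only on $l,c_1,c_2$. In the variable $s=e^t>0$ each of the three inequalities $l_i^*+l_j^*\geq l_k^*$ is linear in $s$ and hence cuts out a half-line of $(0,\infty)$ (or all of it); their intersection with $(0,\infty)$ is therefore an interval, possibly empty, and pulling back by the increasing homeomorphism $t\mapsto e^t$ shows that the set of admissible $t$ is an interval — which is the assertion.

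For (c), $v_3$ being the flat vertex means $l_3 = l_1+l_2$, so $(0,0,t)*l$ has $l_1^* = l_1e^t$, $l_2^* = l_2e^t$, $l_3^* = l_1+l_2$. The inequality $l_1^*+l_2^* > l_3^*$ becomes $e^t>1$ and the remaining two become $e^t < (l_1+l_2)/|l_1-l_2|$ (vacuous when $l_1=l_2$), so $(0,0,t)*l$ is a genuine triangle for all small $t>0$. Then (\ref{cosine}) gives
\begin{equation*}
\cos a_3(0,0,t) = \frac{l_1^2+l_2^2}{2l_1l_2} - \frac{(l_1+l_2)^2}{2l_1l_2}\,e^{-2t},
\end{equation*}
a strictly increasing function of $t$, so $a_3(0,0,t)$ is strictly decreasing for every $t$ with $(0,0,t)*l$ a generalized triangle; and $\cos a_1(0,0,t) = h(t) := \tfrac{l_2-l_1}{2l_2}e^t + \tfrac{l_1+l_2}{2l_2}e^{-t}$ satisfies $h(0)=1$ and $h'(0)=-l_1/l_2<0$, so $h$ decreases strictly on some $[0,\epsilon_1)$ and $a_1(0,0,t)=\arccos h(t)$ increases strictly there; symmetrically for $a_2$ on some $[0,\epsilon_2)$, and one takes $\epsilon=\min(\epsilon_1,\epsilon_2)$.

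The computations are routine; the only point needing care is that the differentiation in (b) is licit only at non-degenerate triangles, which is precisely why (a) and (c) are handled separately by passing to the variable $e^{\pm t}$, where the relevant monotonicity is manifest. I do not anticipate a genuine obstacle.
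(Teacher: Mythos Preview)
Your proof is correct and, for parts (a) and (b), essentially identical to the paper's: the paper handles (a) exactly as you do (the three triangle inequalities are linear in $e^{w_3}$ and hence cut out an interval), and for (b) simply cites \cite[Theorem~2.1]{luo}, which is precisely the cosine-law differentiation you spell out; your Dirichlet-energy argument for positive semi-definiteness is a clean way to finish.

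For part (c) the routes diverge slightly. The paper does not write down explicit formulas for $\cos a_i(0,0,t)$; instead it first observes that $(0,0,t)*l$ is non-degenerate for small $t>0$, then invokes the derivative formulas of (b) \emph{at those non-degenerate points}: $\partial a_3/\partial w_3 = -\sin(a_3)/(\sin(a_1)\sin(a_2))<0$ gives the strict decrease of $a_3$ on the whole interval (using (a) for connectedness of the domain), and $\partial a_1/\partial w_3 = \cot(a_2)\to +\infty$ as $t\to 0^+$ gives the local increase of $a_1$ (and symmetrically $a_2$). Your approach---computing $\cos a_i$ explicitly as $A e^t + B e^{-t}$ and reading off monotonicity---is more self-contained and avoids even the appeal to (b); the paper's approach has the virtue of showing how (c) follows structurally from (b) without any new calculation. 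Both are short and unproblematic.
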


\begin{figure}[ht!]
\begin{center}
\begin{tabular}{c}
\includegraphics[width=0.66\textwidth]{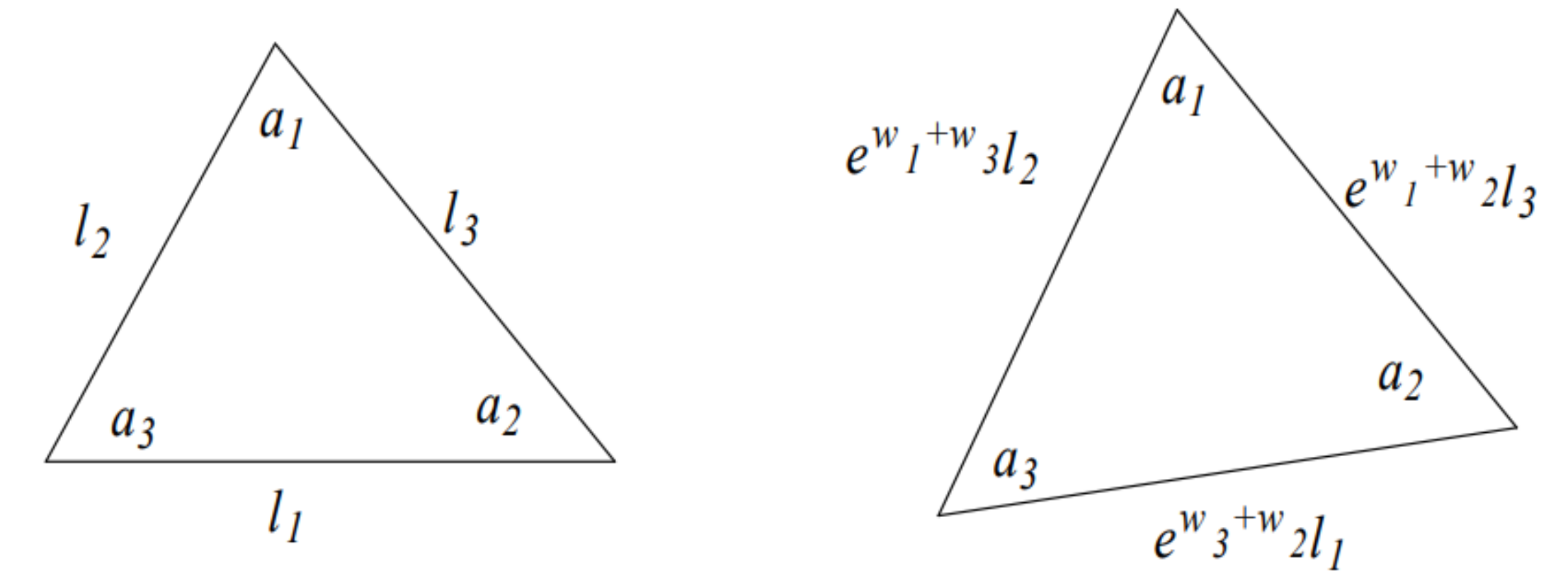}
\end{tabular}
\end{center}
\caption{Vertex scaling of a triangle} \label{555}
\end{figure}


\begin{proof}  To see part (a), without loss of generality,  we may assume $c_1$
and $c_2$ are the given constants. Then the variable $w_3$ is
defined by three inequalities $e^{w_3}( e^{c_1}l_2 +e^{c_2}l_1)
\geq e^{c_1+c_2}l_3$, $ e^{c_1+c_2}l_3  \geq e^{w_3}( e^{c_1}l_2
-e^{c_2}l_1) \geq -e^{c_1+c_2}l_3$. Each of these inequalities
defines an interval in $w_3$ variable.  Therefore the solution
space is either the empty set or a connected set.

Part (b) is in \cite[Theorem 2.1]{luo}.

 To see (c), due to $l_3=l_1+l_2$, for small $t>0$,
we have $(0,0,t)*l=(e^tl_1, e^tl_2, l_1+l_2) \in \mathbf \Delta  :=
\{(x_1, x_2, x_3) \in \R^2_{>0} | x_i +x_j \geq x_k\}$.  Now by (\ref{12}) and the Sine Law, $\frac{\partial a_3}{\partial
w_3}(0,0,t)=-\frac{\sin(a_3)}{\sin(a_1)\sin(a_2)} <0$.
Together with part (a), the angle $a_3(0,0,t)$ as a function of
$t$ is defined on an interval and is strictly decreasing in $t$.
Since $\lim_{t \to 0^+} \frac{\partial a_1}{\partial w_3} =\lim_{t
\to 0^+} \cot(a_2) = \infty$ due to $\lim_{t \to 0^+}
a_2(t,0,0)=0$, therefore the result holds for $a_1$. By the same
argument, the result holds for $ a_2$.


\end{proof}


As a consequence,
\begin{corollary}\label{trig} Under the same assumption as
in Proposition \ref{variation}, if $w(t)=(w_1(t), w_2(t), w_3(t))
\in \R^3$ is smooth in $t$ such that $w(t)*l$ is the edge length
vector of a triangle with inner angle
$a_i(t)=a_i(w(t)*l)$ at $v_i$, then
\begin{equation} \label{893} \frac{d a_i(t)}{dt} =\sum_{j \sim i}
\cot(a_k)(\frac{d w_j}{dt} -\frac{d w_i}{dt})
\end{equation}
where  $j \sim i$ means $v_j$ is adjacent to $v_i$ and
$\{i,j,k\}=\{1,2,3\}$.
\end{corollary}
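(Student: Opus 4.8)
The plan is to derive Corollary~\ref{trig} directly from part (b) of Proposition~\ref{variation} by the chain rule, together with the translation invariance of the angle functions. First I would observe that the inner angles $a_i$ depend only on the edge lengths of the vertex-scaled triangle $w*l$, and the map $w \mapsto w*l$ depends smoothly on $w$ on the open set where $w*l$ is non-degenerate. Hence each $a_i$ is a smooth function of $w$ near any $w_0$ with $w_0 * l$ non-degenerate. Composing with the smooth path $t \mapsto w(t)$ and applying the chain rule gives
\[
\frac{d a_i(t)}{dt} = \sum_{j=1}^3 \frac{\partial a_i}{\partial w_j}\Big|_{w=w(t)} \frac{dw_j}{dt}.
\]

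Next I would handle the evaluation of the partial derivatives $\frac{\partial a_i}{\partial w_j}$ not at $w=0$ but at an arbitrary $w=w(t)$. The key remark is that the formulas in (\ref{12}) hold with $l$ replaced by any non-degenerate edge length vector; equivalently, to compute $\frac{\partial a_i}{\partial w_j}|_{w=w(t)}$ for the base triangle $l$ one simply applies (\ref{12}) to the base triangle $w(t)*l$, whose angles are exactly $a_1(t), a_2(t), a_3(t)$. This is legitimate because vertex scaling is ``transitive'': $(w(t)+s e_j)*l = (s e_j)*(w(t)*l)$, so differentiating in $s$ at $s=0$ reduces to the $w=0$ case applied to the triangle $w(t)*l$. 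Thus, writing $\{i,j,k\}=\{1,2,3\}$,
\[
\frac{\partial a_i}{\partial w_i}\Big|_{w=w(t)} = -\frac{\sin a_i(t)}{\sin a_j(t)\sin a_k(t)}, \qquad \frac{\partial a_i}{\partial w_j}\Big|_{w=w(t)} = \cot a_k(t).
\]

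Finally I would assemble these into the stated form. Using $\sum_{j}\frac{\partial a_i}{\partial w_j}=0$ from (\ref{12}), the diagonal term can be rewritten as $\frac{\partial a_i}{\partial w_i} = -\sum_{j\sim i}\frac{\partial a_i}{\partial w_j} = -\sum_{j\sim i}\cot a_k$ where in each summand $\{i,j,k\}=\{1,2,3\}$. Substituting into the chain-rule expression,
\[
\frac{d a_i(t)}{dt} = \frac{\partial a_i}{\partial w_i}\frac{dw_i}{dt} + \sum_{j\sim i}\frac{\partial a_i}{\partial w_j}\frac{dw_j}{dt} = \sum_{j\sim i}\cot(a_k)\Big(\frac{dw_j}{dt} - \frac{dw_i}{dt}\Big),
\]
which is exactly (\ref{893}). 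I do not anticipate a serious obstacle here; the only point requiring care is the justification that (\ref{12}), stated at $w=0$, may be applied at a general admissible $w(t)$ via the transitivity of vertex scaling and the fact that along the path $w(t)*l$ remains a non-degenerate triangle (so that the angles are genuinely differentiable there). Everything else is the chain rule and the linear-algebra identity $\sum_j \partial a_i/\partial w_j = 0$.
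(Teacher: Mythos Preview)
Your proof is correct and follows essentially the same route as the paper: chain rule, then rewrite $\partial a_i/\partial w_i$ via the identity $\sum_j \partial a_i/\partial w_j = 0$ and $\partial a_i/\partial w_j = \cot a_k$, and regroup. Your explicit justification that the formulas in (\ref{12}) apply at a general $w(t)$ via the transitivity $(w(t)+se_j)*l = (se_j)*(w(t)*l)$ is a point the paper leaves implicit, but the argument is otherwise identical.
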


Write $w'_j(t)=\frac{dw_j}{dt}$. Indeed by the chain rule and
(\ref{12}), we have
$$\frac{d a_i(t)}{dt} =\frac{\partial a_i}{\partial w_i}w_i'+
\sum_{j \neq i} \frac{\partial a_i}{\partial w_j} w_j'  $$
$$= -\sum_{j \neq i} \cot(a_k) w_i'+ \sum_{j \neq i}
\cot(a_k)w_j'  = \sum_{j \sim i} \cot(a_k)(w_j'-w_i').$$

Suppose $(S, \T, l)$ is a geometrically triangulated compact
polyhedral surface and $w(t) \in \R^V$ is a smooth  path in parameter $t$ such that $w(t)*l$ is a PL metric on ($S, \T$). Let $K_i
=K_i(t)$ be the discrete curvature  at $i\in V$ and
$\theta^i_{jk}=\theta^i_{jk}(t)$ be the inner angle at the vertex
$i$ in $\Delta ijk$ in the metric $w(t)*l$.  For an edge $[ij]$ in
the triangulation $\T$,  define $\eta_{ij}$ to be
$\cot(\theta^k_{ij})+\cot(\theta^l_{ij})$ if $[ij]$ is an interior
edge facing two angles $\theta^k_{ij}$ and $\theta^l_{ij}$ and
$\eta_{ij}=\cot(\theta^k_{ij})$ if $[ij]$ is a boundary edge.  If
$[ij]$ is an interior edge, then $\eta_{ij} \geq 0$ if and only if
$\theta^k_{ij}+\theta^l_{ij} \leq \pi$, i.e., the Delaunay
condition holds at $[ij]$.

The curvature variation formula is the following,
\begin{proposition}
\begin{equation}\label{curvatureevol}
\frac{d K_i(t)}{dt} =\sum_{ j \sim i} \eta_{ij}(\frac{d w_i}{dt}
-\frac{d w_j}{dt}).
\end{equation} \end{proposition}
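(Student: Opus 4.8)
The plan is to differentiate the definition of discrete curvature at $i$ and substitute the angle variation formula of Corollary~\ref{trig}, then reorganize the resulting sum over the triangles at $i$ into a sum over the edges of $\T$ incident to $i$. First, since $w(t)*l$ is assumed to be a genuine PL metric on $(S,\T)$, each triangle of $\T$ satisfies the strict triangle inequality (\ref{striangle}) in this metric, so by the cosine formula (\ref{cosine}) every inner angle $\theta^i_{jk}(t)$ is a smooth function of $t$, and hence so is $K_i(t)$. By definition, $K_i=2\pi-\sum_{\Delta ijk\ni i}\theta^i_{jk}$ when $i$ is an interior vertex and $K_i=\pi-\sum_{\Delta ijk\ni i}\theta^i_{jk}$ when $i$ is a boundary vertex; in either case
$$\frac{dK_i(t)}{dt}=-\sum_{\Delta ijk\ni i}\frac{d\theta^i_{jk}(t)}{dt}.$$

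Next, I apply Corollary~\ref{trig} to each triangle $\Delta ijk$ containing $i$. Taking $\{i,j,k\}$ in place of $\{1,2,3\}$, the two neighbors of $i$ inside this triangle are $j$ and $k$, so
$$\frac{d\theta^i_{jk}}{dt}=\cot(\theta^k_{ij})\Bigl(\frac{dw_j}{dt}-\frac{dw_i}{dt}\Bigr)+\cot(\theta^j_{ik})\Bigl(\frac{dw_k}{dt}-\frac{dw_i}{dt}\Bigr).$$
The idea is then to attribute the first summand to the edge $[ij]$ and the second to the edge $[ik]$. Each triangle at $i$ has exactly the two edges $[ij]$ and $[ik]$ emanating from $i$, so after summing over all triangles at $i$ every contribution is attached to a unique edge $[ij]$ with $j\sim i$, and all contributions attached to $[ij]$ carry the same factor $\frac{dw_j}{dt}-\frac{dw_i}{dt}$. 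If $[ij]$ is an interior edge it lies in the two triangles $\Delta ijk$ and $\Delta ijl$ and contributes the coefficient $\cot(\theta^k_{ij})+\cot(\theta^l_{ij})=\eta_{ij}$; if $[ij]$ is a boundary edge it lies in a single triangle $\Delta ijk$ and contributes $\cot(\theta^k_{ij})=\eta_{ij}$. Collecting terms,
$$\frac{dK_i(t)}{dt}=-\sum_{j\sim i}\eta_{ij}\Bigl(\frac{dw_j}{dt}-\frac{dw_i}{dt}\Bigr)=\sum_{j\sim i}\eta_{ij}\Bigl(\frac{dw_i}{dt}-\frac{dw_j}{dt}\Bigr),$$
which is (\ref{curvatureevol}).

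There is no genuine analytic difficulty here once Corollary~\ref{trig} is available; the only point requiring care is the bookkeeping in the reindexing. Specifically, one must check that for an interior edge $[ij]$ both adjacent triangles really do produce a term in which the difference of conformal factor derivatives is taken in the \emph{same} order, $\frac{dw_j}{dt}-\frac{dw_i}{dt}$, so that the two cotangents add rather than cancel to give $\eta_{ij}$, and that in passing from the sum over triangles to the sum over edges no term is dropped or double-counted. One should also observe that, since the statement concerns an arbitrary smooth family $w(t)$, it suffices to establish the identity at each fixed $t$, where it reduces to the single computation above; the differentiability of $K_i(t)$ needed to make sense of the left-hand side was already handled in the first step.
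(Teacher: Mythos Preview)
Your proof is correct and follows essentially the same approach as the paper: differentiate the definition of $K_i$, apply Corollary~\ref{trig} to each triangle at $i$, and regroup the resulting cotangent terms edge by edge using the definition of $\eta_{ij}$. The paper's proof is a one-line sketch of exactly this computation; your version simply spells out the reindexing from triangles to edges more carefully.
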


This follows directly from the Corollary \ref{trig} since $K_i=c
\pi-\sum_{r, s \in V} \theta^i_{rs}$ where $c=1$ or $2$ and
$\theta^i_{rs}$ are angles at $i$. Since $\frac{d K_i(t)}{dt}
=-\sum_{r,s \in V} \frac{d \theta^i_{rs}}{dt}$,
(\ref{curvatureevol}) follows from (\ref{893}) and the definition
of $\eta_{ij}$.

\section{A maximum principle, a ratio lemma and spiral hexagonal triangulations}

Let $v_0$ be an interior point of a star-shaped $n$-sided polygon
$P_n$ having vertices $v_1, ..., v_n$ labelled cyclically. The
triangulation $\T$ of $P_n$ with vertices $v_0, ..., v_n$ and
triangles $\Delta v_0v_iv_{i+1}$ ($v_{n+1}=v_1$) is called a \it
star triangulation \rm of $P_n$. See Figure \ref{f111}.

\begin{theorem}[Maximum principle] \label{max}  Let $\T$ be
a star triangulation of $P_n$  and $l: E(\T) \to \R_{>0}$ be  a
generalized Delaunay polyhedral metric on $\T$. If $w:\{v_0,
v_1,..., v_n\} \to \R$ satisfies


(a) $w*l$ is a generalized Delaunay polyhedral metric on $\T$,

(b) the curvatures $K_0(w*l)$ of $w*l$ and $K_0(l)$ of $l$ at
vertex $v_0$  satisfy $K_{0}(w*l) \leq K_{0}(l)$, and

(c) $w(v_0)=\max\{ w(v_i)| i=0,1,..., n\}$,

then $w(v_i)=w(v_0)$ for all $i$.
\end{theorem}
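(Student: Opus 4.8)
The plan is to use the curvature variation formula (\ref{curvatureevol}) along the linear path of conformal factors connecting $l$ to $w*l$, exploiting the sign of the weights $\eta_{ij}$ guaranteed by the Delaunay condition. Set $u = w - w(v_0)$, so $u(v_0) = \max_i u(v_i) = 0$ and $u \le 0$ everywhere. Consider the path $w(t) = w(v_0) \cdot \mathbf 1 + t\, u$ for $t \in [0,1]$; note $w(0)*l$ is isometric to $l$ (a global scaling by $e^{w(v_0)}$, which changes no angles or curvatures) and $w(1)*l = w*l$. The first issue is that $w(t)*l$ need not remain a polyhedral metric, nor Delaunay, for all intermediate $t$. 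I would address this by restricting attention to the connected subinterval $J \subset [0,1]$ containing $0$ on which $w(t)*l$ is a generalized polyhedral metric; Proposition \ref{variation}(a), applied triangle by triangle with $w_i, w_j$ fixed appropriately (or more simply the convexity of the triangle-inequality constraints in the $w$-variables), shows the feasible set is convex, so $J$ is an interval. On $J$ one has the variation formula, but to integrate it I need the Delaunay condition — i.e. $\eta_{ij} \ge 0$ on interior edges — along the whole path, not just at the endpoints.

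The key step is therefore to show $\eta_{ij}(t) \ge 0$ for every interior edge and all $t \in [0,1]$. This is where I expect the main obstacle to lie. One clean route: for a star triangulation the only interior edges are the spokes $[v_0 v_i]$, each facing the two angles at $v_{i-1}$ and $v_{i+1}$ in the two triangles $\Delta v_0 v_{i-1} v_i$ and $\Delta v_0 v_i v_{i+1}$. Using Corollary \ref{trig} I can track how the relevant angle sum $\theta^{i-1}_{0i} + \theta^{i+1}_{0i}$ evolves, and argue via the monotonicity statements in Proposition \ref{variation}(b)–(c) together with the sign structure of $u$ that the Delaunay inequality is preserved; a degenerate triangle appearing along the path is handled by part (c). If preservation of Delaunayness along the straight-line path cannot be guaranteed directly, the alternative is to replace the straight path by a path that stays in the (convex) feasible region and along which one controls the angles — but I would first try the straight-line argument. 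Once $\eta_{ij}(t)\ge 0$ is known on $[0,1]$, integrate (\ref{curvatureevol}) at $i = 0$:
\begin{equation*}
K_0(w*l) - K_0(l) = \int_0^1 \sum_{j \sim v_0} \eta_{0j}(t)\,\big(u(v_0) - u(v_j)\big)\, dt = \int_0^1 \sum_{j \sim v_0} \eta_{0j}(t)\,\big({-u(v_j)}\big)\, dt \ge 0,
\end{equation*}
since $\eta_{0j}(t) \ge 0$ and $-u(v_j) \ge 0$. Hypothesis (b) says the left side is $\le 0$, so the integrand vanishes identically; hence for every $j \sim v_0$ and a.e. $t$, either $\eta_{0j}(t) = 0$ or $u(v_j) = 0$.

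To finish I need to upgrade this to $u \equiv 0$. The point is that $\eta_{0j}(t)$ cannot vanish identically on the whole interval: if $u(v_j) < 0$ for some neighbor $v_j$ of $v_0$, then along the path the triangles at that spoke are genuinely deformed, and I claim the two face-angles at the spoke $[v_0 v_j]$ cannot both sit at exactly $\pi/2$ (total $\pi$) for all $t$ — this would force a rigid relation incompatible with strictly decreasing one conformal factor relative to another, by Proposition \ref{variation}(b) (the strict negativity of $\partial a_i/\partial w_i$ and the matrix being positive semidefinite with one-dimensional kernel). Working this out shows $u(v_j) = 0$ for all $v_j \sim v_0$, i.e. all boundary vertices, and combined with $u(v_0)=0$ gives $w$ constant. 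The delicate part is precisely this last non-degeneracy argument plus the earlier Delaunay-preservation claim; everything else is bookkeeping with the variation formula.
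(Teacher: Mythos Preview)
Your plan contains a genuine gap at the very first step: the claim that the feasible set (those $w$ for which $w*l$ satisfies the generalized triangle inequalities) is convex, so that the straight-line path $w(t)=w(v_0)\mathbf 1+tu$ stays feasible. Proposition~\ref{variation}(a) does not say this --- it asserts only connectivity in \emph{one} coordinate with the other two held fixed. In fact the triangle-inequality constraints are not convex in $w$. For a single triangle $\Delta v_0v_iv_{i+1}$ with $w_0=0$, the constraint $A(t)+C(t)\ge B(t)$ (where $A,B,C$ are the scaled edge lengths) becomes, after dividing by $e^{tu_{i+1}}$,
\[
a\,e^{t(u_i-u_{i+1})} + c\,e^{tu_i}\ \ge\ b.
\]
The left side is a positive combination of exponentials, hence convex in $t$; such a function can dip below $b$ on an interior subinterval while exceeding $b$ at both endpoints. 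A concrete instance: $a=l(v_0v_i)=1$, $b=l(v_0v_{i+1})=10.995$, $c=l(v_iv_{i+1})=10$, with $u_i=-0.1$, $u_{i+1}=-1$ (both $\le 0$, as required). One checks directly that both $l$ and $u*l$ are honest triangles, yet near $t\approx 0.1$ the inequality fails. So the straight-line path can leave the set of generalized PL metrics altogether, and your integration from $0$ to $1$ is not justified.

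The second obstacle you flag --- preservation of the Delaunay condition along the path --- is of the same nature and equally unresolved; the opposite-angle sums have no reason to behave monotonically along a linear path in $w$. This is exactly why the paper does \emph{not} try to connect $l$ to $w*l$ by a path. Its argument is a compactness-plus-perturbation scheme: assuming $w\le 0$, $w_0=0$, $w\ne 0$, and $\alpha(w)\ge\alpha(0)$, one maximizes the cone angle $\alpha$ over the compact set
\[
X=\{x\in\R^{n+1}:\ w\le x\le 0,\ x_0=0,\ x*l\ \text{generalized Delaunay}\},
\]
and the key local Lemma~\ref{key} shows that at any nonzero $x\in X$ one can strictly increase $\alpha$ by raising a single coordinate $x_j$ while remaining in $X$. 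All the delicate work you defer --- choosing the right index $j$ (either the one with smallest spoke length, or one at which the Delaunay inequality $a_j+b_j<\pi$ is strict), checking the perturbed metric is still a generalized PL metric, and verifying Delaunay on the three affected edges, including when degenerate triangles are present --- lives inside that lemma. That local analysis is what replaces, and is necessitated by, the failure of the global path argument you propose.
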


As a convention, if $x=(x_0, ..., x_m)$ and $y=(y_0, y_1, ...,
y_m)$ are in $\R^{m+1}$, then $x \geq y$ means $x_i \geq y_i$ for
all $i$. Given $w:\{v_0, ..., v_m\} \to \R$, we use $w_i =w(v_i)$
and identify $w$ with $(w_0, w_1, ..., w_m) \in \R^{m+1}$. The
cone angle of $w*l$ at $v_0$ will be denoted by $\alpha(w)$. Thus
 Theorem \ref{max}(b) says $\alpha(w) \geq \alpha(0)$.

The proof of Theorem \ref{max} depends on the following lemma.

\begin{lemma} \label{key} If $w:\{v_0,
v_1,..., v_n\} \to \R$ satisfies (a), (b), (c) in Theorem
\ref{max} such that there is $w_{i_0} < w_0$, then there exists
$\hat{w} \in \R^{n+1}$ such that

(a) $\hat{w}_i \geq w_i$ for $i=1,2,..., n$,

(b) $\hat{w}_i \leq  \hat{w}_0=w_0$ for $i=1,2,...,n$,

(c)  $\hat{w}*l$ is a generalized Delaunay polyhedral metric on
$\T$, and

(d)  \begin{equation} \alpha(\hat{w})
> \alpha(w). \end{equation}
\end{lemma}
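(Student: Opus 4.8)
The idea is to increase the conformal factor at a well-chosen vertex in the "outer ring" $\{v_1,\dots,v_n\}$ while keeping everything Delaunay, and to track how the cone angle $\alpha$ at $v_0$ changes via the variation formula (\ref{893}). Since we assume $w_{i_0}<w_0$ for some index, the set $J=\{i\in\{1,\dots,n\}: w_i<w_0\}$ is nonempty. I would like to push the factors on all vertices of $J$ upward toward $w_0$ simultaneously, along the straight segment $w(t) = w + t\,u$, where $u_0=0$ and $u_i = (w_0-w_i)_+$ for $i\ge 1$ (so $u_i>0$ exactly on $J$), for $t\in[0,1]$ — except that I must stop before any triangle degenerates or any interior edge violates the Delaunay condition. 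So the first step is to let $T\in(0,1]$ be the largest time for which $w(t)*l$ is a generalized Delaunay PL metric on $\T$ for all $t\in[0,T]$, and set $\hat w = w(T)$. Conclusions (a), (b), (c) of the lemma are then immediate from the construction: $\hat w_i = w_i + T(w_0-w_i)_+ \ge w_i$, and $\hat w_i \le w_0 = \hat w_0$ since $T\le 1$, and $\hat w*l$ is Delaunay by choice of $T$.

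The heart of the matter is (d): $\alpha(\hat w)>\alpha(w)$. By Corollary \ref{trig} applied to each triangle $\Delta v_0 v_i v_{i+1}$ at the vertex $v_0$, along the path $w(t)$ we have
\begin{equation}
\frac{d\alpha(t)}{dt} = \sum_{i\sim 0}\eta_{0i}\bigl(u_i - u_0\bigr) = \sum_{i\sim 0}\eta_{0i}\,u_i,
\end{equation}
using $u_0=0$ and summing the contributions $\cot(\theta^k_{0i})+\cot(\theta^l_{0i}) = \eta_{0i}$ over the edges at $v_0$ (this is exactly the interior-vertex case of the curvature variation formula (\ref{curvatureevol}), with $\alpha = 2\pi - K_0$). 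Along $[0,T]$ the metric is Delaunay, so every $\eta_{0i}\ge 0$, and every $u_i\ge 0$ with $u_i>0$ for $i\in J\ne\emptyset$. Hence $\frac{d\alpha}{dt}\ge 0$, which already gives $\alpha(\hat w)\ge\alpha(w)$; the work is to rule out equality. If $\frac{d\alpha}{dt}\equiv 0$ on $[0,T]$ then for each $i\in J$ we must have $\eta_{0i}\equiv 0$ along the path, i.e. the two angles facing the edge $[0 v_i]$ sum to exactly $\pi$ for all $t\in[0,T]$; so each such triangle is inscribed in a circle through $v_0$ and the edge $[0v_i]$ is a diameter-type chord with the opposite angle equal to $\pi - \theta^{\text{other}}$. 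I would extract a contradiction by combining this rigidity with hypothesis (b): since $\eta_{0i}\equiv0$ forces the angle sum across those edges to be constant, one analyzes the remaining edges (those incident to $i\notin J$, where $u_i=0$) and uses the maximality of $w_0$ together with Proposition \ref{variation}(b)–(c) — in particular the strict monotonicity $\partial a_i/\partial w_i<0$ in a non-degenerate triangle — to show that scaling up the vertices of $J$ must strictly open up the angle at $v_0$ in at least one triangle, unless all $w_i$ already equal $w_0$, contradicting $w_{i_0}<w_0$.

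The main obstacle I anticipate is precisely this borderline case where the naive derivative vanishes: the Delaunay inequalities $\eta_{0i}\ge 0$ are not strict, and degenerate triangles can occur (the metric is only a \emph{generalized} PL metric), so I cannot simply invoke strict positivity of all the $\eta$'s. Handling it cleanly will likely require either (i) perturbing the direction $u$ slightly — replacing $u$ by a direction that still keeps $\hat w$ in the prescribed box and Delaunay region but makes the angle variation strictly positive, using that the constraint region near $w$ has nonempty interior by Proposition \ref{variation}(a) — or (ii) a direct case analysis at the stopping time $T$ distinguishing whether $T$ is cut off by triangle degeneration (use Proposition \ref{variation}(c): the flat-vertex angle strictly decreases, so the companion angles at $v_0$ strictly increase, forcing strict growth of $\alpha$) or by a Delaunay edge flip becoming tight (before which $\eta_{0i}>0$ on a time interval, again giving strict growth since $u_i>0$ there). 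Option (ii) seems the more robust route, and I would organize the argument around it: show that in every scenario there is a subinterval of $[0,T]$ on which $\frac{d\alpha}{dt}$ is strictly positive, whence $\alpha(\hat w) = \alpha(w) + \int_0^T \frac{d\alpha}{dt}\,dt > \alpha(w)$.
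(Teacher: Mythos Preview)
Your strategy---pushing the conformal factor up on the whole set $J=\{i:w_i<w_0\}$ and using the variation formula to see that $\alpha$ is nondecreasing---is different from the paper's, which singles out \emph{one} index $j\in J$ with $a_j(w)+b_j(w)<\pi$ and increases only $w_j$ by a small amount. The nonnegativity $\tfrac{d\alpha}{dt}=\sum_i \eta_{0i}u_i\ge 0$ is indeed immediate from Delaunay (in the nondegenerate regime). The gap is exactly where you sense it: the strict inequality.

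The missing step is to show that \emph{at $t=0$} there exists $j\in J$ with $\eta_{0j}(w)>0$, i.e.\ $a_j(w)+b_j(w)<\pi$. Your option (ii) does not supply this: if $a_j(w)+b_j(w)=\pi$ for every $j\in J$, nothing in a ``stopping time'' analysis prevents this from persisting along the whole segment, and then $\tfrac{d\alpha}{dt}\equiv 0$ regardless of what determines $T$; worse, you have not shown $T>0$ (if $a_j+b_j=\pi$ at $t=0$ for some $j\in J$, moving all of $J$ simultaneously may immediately push $a_j+b_j$ past $\pi$). The paper's argument for this key point is a comparison with the \emph{original} metric $l$ that you never invoke: writing $\alpha=n\pi-\beta-\gamma$ with $\beta=\sum_{i\in I}(a_i+b_i)$, $\gamma=\sum_{j\in J}(a_j+b_j)$ and $I=\{i:w_i=w_0\}$, one shows (using only Proposition~\ref{variation}) that $\beta(w)>\beta(0)$ when $I\neq\emptyset$, hence hypothesis (b) in the form $\alpha(w)\ge\alpha(0)$ forces $\gamma(w)<\gamma(0)\le |J|\pi$ (the last inequality using that $l$ itself is Delaunay). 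This yields some $j\in J$ with $a_j(w)+b_j(w)<\pi$. When $I=\emptyset$ one instead picks $j$ minimizing $w*l(v_0v_j)$ to get $a_j,b_j<\pi/2$. Either way, hypothesis (b) and the Delaunay property of $l$ (not just of $w*l$) are what produce the strict inequality, and your sketch uses neither.

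A secondary issue: the differential identity $\tfrac{d\alpha}{dt}=\sum_i\eta_{0i}u_i$ is derived in Corollary~\ref{trig} only for nondegenerate triangles, and $\eta_{0i}$ is not even finite if an adjacent triangle is degenerate. The paper handles this by first proving (their Claim~1) that $c_{j-1}(w),c_j(w)<\pi$ for $j\in J$---using $w_j<w_0$ directly against the triangle inequality for $l$---so that any degeneracy in the two triangles at $v_j$ has its flat vertex at $v_j$; then Proposition~\ref{variation}(c) gives monotonicity of the relevant angles without differentiating. Once you have the strict Delaunay index $j$ from the comparison argument above, increasing only $w_j$ (rather than all of $J$) makes the verification of (i) generalized PL metric, (ii) Delaunay, and (iii) $\alpha(\hat w)>\alpha(w)$ straightforward; this is why the paper perturbs a single coordinate.
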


Let us first prove  Theorem \ref{max} using Lemma \ref{key}.

\begin{proof} By replacing $w$
by $w-w(v_0)(1,1,...,1)$, we may assume that $w(v_0)=0$. Suppose
the result does not hold, i.e., there exists $w$ so that $w_0=0$,
$w_i \leq 0$ for $i=1,2,..., n$ with one $w_{i_0}<0$, and $w*l$ is
a generalized Delaunay PL metric on $\T$ so that
 $\alpha(w) \geq \alpha(0)$. We will derive a contradiction as
 follows. By Lemma \ref{key}, we may assume, after replacing $w$ by
$\hat{w}$, that
\begin{equation}\label{larger} \alpha(w) > \alpha(0).
\end{equation}
 Consider the set
 $$X =\{ x \in \R^{n+1} | w \leq x \leq 0, x_0=0, \text{
 $x*l$ is a generalized Delaunay polyhedral metric on $\T$}\}.$$
 Clearly $w \in X$ and therefore $X
\neq \emptyset$ and $X$ is bounded. Since inner angles are
continuous in edge lengths, we see that $X$ is a closed set in
$\R^{n+1}$. Therefore $X$ is compact. Let $t \in X$ be a maximum
point of the continuous function $f(x)=\alpha(x)$ on $X$. We claim
that $t=0$.
To prove this, we assume $t\neq0$ and $t\leq0$. Then
by
Lemma \ref{key}, we can find $\hat{t} \geq t$ such that  $\hat{t}_0=0$ and
$\hat{t} \leq 0$, and $\hat{t}*l$ is a generalized Delaunay
polyhedral metric on $\T$ with $\alpha(\hat{t}) > \alpha(t)$. This
contradicts the maximality of $t$. 
  Now for $t=0$, we
have
$$ \alpha(0)=\alpha(t) \geq \alpha(w) > \alpha(0)$$
where the last inequality follows from (\ref{larger}). This is a
contradiction.
\end{proof}


Now back to the proof of Lemma \ref{key}. \begin{proof}  After
replacing $w$ by $w-w_0(1,1,...,1)$, we way assume $w_0=0$. Let
$a_i=a_i(w)=a_i(w_0, w_i, w_{i+1})$, $b_i=b_i(w)=b_i(w_0, w_{i-1},
w_i)$ and $c_i=c_i(w)=c_i(w_0, w_i, w_{i+1})$ be the inner angles
$\angle v_0v_{i+1}v_{i}$, $\angle v_0v_{i-1}v_i$ and $\angle
v_iv_0v_{i+1}$ in the metric $w*l$ respectively. See Figure
\ref{f111}. Let $l_i=l(v_0v_i)$ and $l_{i, i+1}
=l(v_iv_{i+1})$ be the edge lengths in the metric $l$.
\begin{figure}[ht!]
\begin{center}
\begin{tabular}{c}
\includegraphics[width=0.5\textwidth]{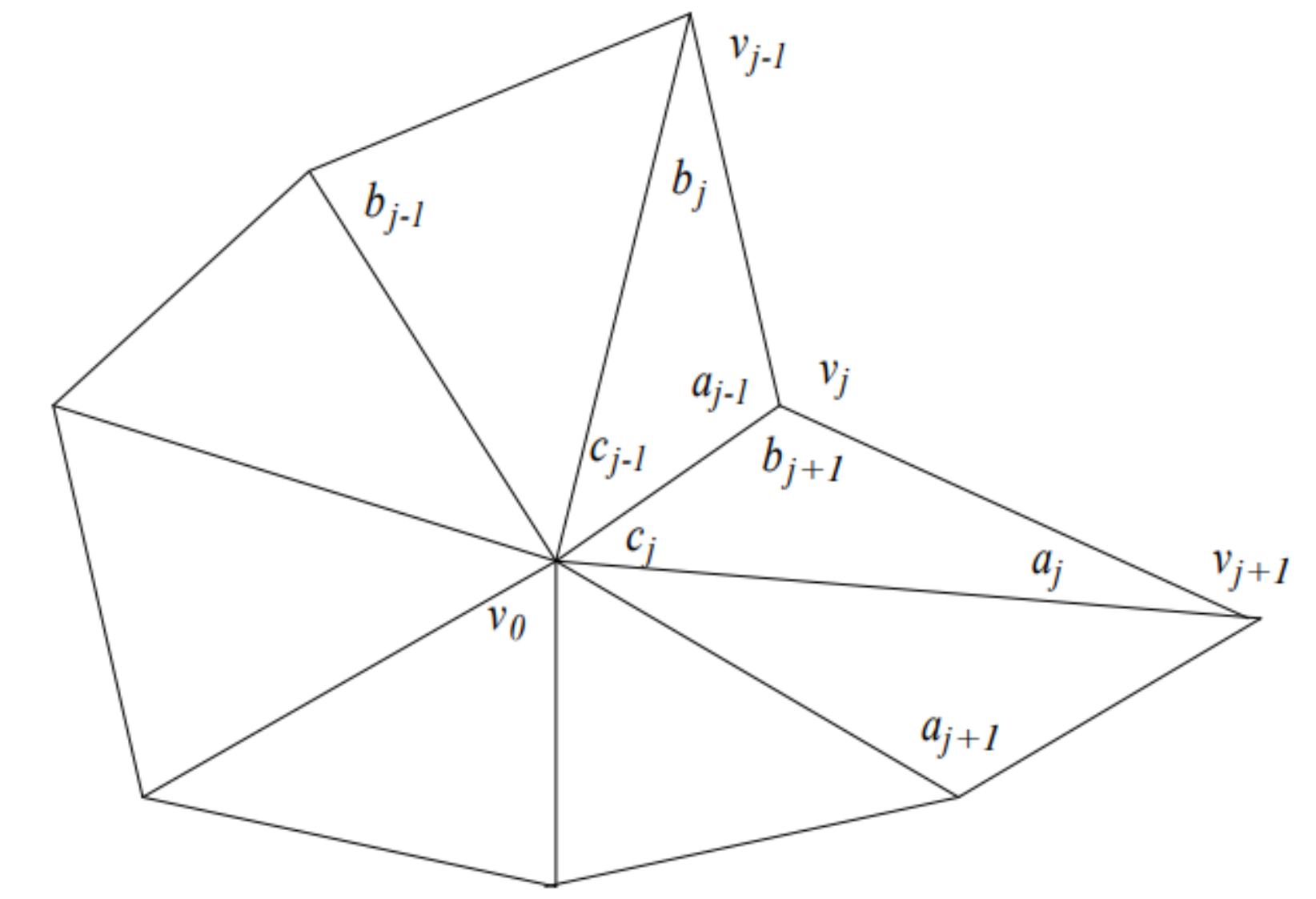}
\end{tabular}
\end{center}
\caption{Star triangulation of an $n$-sided polygon}
\label{f111}
\end{figure}

Let us begin the proof for the simplest case where all triangles
in $w*l$ are non-degenerate  (i.e., $w*l$ is a PL metric) and $w_i
<0$ for all $i \geq 1$.
Let $j\in \{1,2,3,..., n\}$ be the index such that
$w*l(v_0v_j)=\min\{ w*l(v_0v_k)| k=1,2,..., n\}$. It is well known
that in a Euclidean triangle $\triangle ABC$, $\angle A<\pi/2$ if $\overline {BC}$ is not the unique largest
edge.  Hence, due to $w*l(v_0v_j)\leq
w*l(v_0v_{j\pm 1})$, in the triangles $\Delta v_0v_jv_{j\pm 1}$,
we have
\begin{equation}\label{pi/2} \text{$a_j(w)< \pi/2$,
\quad $b_j(w) <\pi/2$ \quad and \quad $a_j(w)+b_j(w)<\pi$.}
\end{equation}

   Now consider $\hat{w}=(w_0, w_1, ...,
 w_{j-1}, w_j+t, w_{j+1}, ..., w_n)$.
 For small $t>0$, $\hat{w(t)}*l$ is still a PL metric since $w*l$ is.
 We claim $\hat{w}*l$ is still Delaunay for small $t$.
 Indeed,  by  Proposition \ref{variation}, both angles $a_{j-1}$ and
 $b_{j+1}$ decrease in $t$. On the other hand $a_{j+1}(w)=a_{j+1}(\hat{w})$ and
 $b_{j-1}(w)=b_{j-1}(\hat{w})$. Therefore, the Delaunay conditions
 $b_{j-1}+a_{j-1} \leq \pi$ and $b_{j+1}+a_{j+1}\leq \pi$ hold  for edges
 $v_0v_{j \pm 1}$.  The Delaunay condition on the edge
 $v_0v_j$ follows from choice of $j$ that $a_j+b_j<\pi$. Finally,  by Proposition
 \ref{variation}(b), $\frac{d \alpha(\hat{w})}{dt} =\cot(a_j)+\cot(b_j) =
 \frac{\sin(a_j+b_j)}{\sin(a_j)\sin(b_j)}
 >0$.   Therefore,  for small $t>0$,  we have $\alpha(\hat{w}) > \alpha(w)$.

In the general case, the above arguments  still work.

Let $J=\{j \in V | w_j <0\}$. By assumption, $J \neq \emptyset$.

{\bf Claim 1.} If $j \in J$, then $c_j(w) <\pi$ and $c_{j-1}(w) <
\pi$.

We prove $c_{j-1}(w) < \pi$ by contradiction. Suppose otherwise
that $c_{j-1}(w)=\pi$. Then the triangle $\Delta v_0v_jv_{j-1}$ is
degenerate  in $w*l$ metric, i.e., $ e^{w_j+w_{j-1}} l_{j, j-1} =
e^{w_j}l_j +e^{w_{j-1}} l_{j-1}$. Due to $w_j <0$ and $w_{j-1}
\leq 0$, we have
$$ e^{w_j+w_{j-1}} l_{j, j-1} = e^{w_j}l_j +e^{w_{j-1}} l_{j-1}
> e^{w_j+w_{j-1}}l_j +e^{w_j+w_{j-1}} l_{j-1}
=e^{w_j+w_{j-1}}(l_j+l_{j-1}).$$ This shows $l_{j, j-1} >
l_j+l_{j-1}$ which contradicts the triangle inequality for $l$
metric.  Therefore $c_{j-1}(w) <\pi$. By the same argument, we
have $c_j(w)<\pi$.  This proves Claim 1.

Let $I=\{ i>0 |w_i=0\}$ and
$$ \beta(w)=\sum_{i \in I} (b_i(w)+a_i(w))$$ and
$$\gamma(w)=\sum_{j \in J} (b_j(w) + a_j(w)).$$  Note that the
cone angle at $v_0$ is
$$\alpha(w)=\sum_{i=1}^n (\pi-a_i(w)-b_{i+1}(w))=\pi n
-\beta(w)-\gamma(w).$$ By the assumption that $\alpha(w) \geq
\alpha(0)$, we have
\begin{equation}\label{123}\beta(w)+\gamma(w) \leq
\beta(0)+\gamma(0). \end{equation}

{\bf Claim 2.} If $I \neq \emptyset$, then $\beta(w) > \beta(0).$

Indeed, if $i \in I$, i.e., $w_i=0$, then in the triangle $\Delta
v_0v_iv_{i\pm 1}$, we have $w_0=0, w_i=0$ and $w_{i\pm 1} \leq 0$.
Since $\Delta v_0v_iv_{i-1}$ are generalized triangles in both $l$
and $w*l$  metrics, by proposition \ref{variation}(a),
 we see that $\Delta v_0v_iv_{i-1}$ is a generalized triangle in
 $(w_0,..., w_{i-2}, tw_{i-1}, w_i, ..., w_n)*l$  for $t \in [0, 1]$.
By Proposition  \ref{variation} and $w_{i-1} \leq 0$,
$b_i(w_0,..., w_{i-2}, tw_{i-1}, w_i, ..., w_n)$ is  increasing in
$t \geq 0$ and is strictly increasing in $t \geq 0$ if
$w_{i-1}<0$. Therefore, $$ b_i(w)=b_i(w_0, w_{i-1}, w_i)  \geq
b_i(w_0, 0, w_i) =b_i(w_0, w_1, ..., w_{i-2}, 0, w_i, ...,
w_n)=b_i(0),$$ and $b_i(w) > b_i(0)$ if $w_{i-1} <0$. Apply the
same argument to $\Delta v_0v_iv_{i+1}$ and $a_i$, we have $a_i(w)
\geq a_i(0)$ and $a_i(w)> a_i(0)$ if $w_{i+1}<0$. Therefore
$\beta(w) \geq \beta(0)$. On the other hand, since $J \neq
\emptyset$, there exists an $i \in I$ so that either $i-1$ or
$i+1$ is in $J$. Say $i-1\in J$, i.e., $w_{i-1}<0$. Then we have
$b_i(w) > b_i(0)$ and $\beta(w)> \beta(0)$.

By claim 2 and (\ref{123}), if $I \neq \emptyset$, we conclude
that
\begin{equation}\label{1234}
 \gamma(w) =\sum_{j \in J} (a_j(w)+b_j(w)) < \gamma(0).
\end{equation}

Since $w*l$ and $l$ are Delaunay, we have $a_i(w)+b_i(w) \leq \pi$
and $a_i(0)+b_i(0)\leq \pi$ for all $i=1,2,..., n$. This implies,
by (\ref{1234}), that there exists $j \in J$ so that
\begin{equation}\label{1235}
a_j(w)+b_j(w) < \pi.
\end{equation}

If $I=\emptyset$, let $j\in J=\{1,2,3,..., n\}$ be the index such
that $w*l(v_0v_j)=\min\{ w*l(v_0v_k)| k=1,2,..., n\}$. Then the
same argument used in showing (\ref{pi/2}) and Claim 1 imply
(\ref{1235}) still holds. (Here Claim 1 is used to show that
$(w_0, ..., w_{j-1}, w_j+t, w_{j+1}, ..., w_n)*l$ is a generalized
PL metric for small $t>0$.)



Fix this $j \in J$ as above. To finish the proof, we will show
that there exists a small $t>0$ so that for $\hat{w}$=$(w_0,
w_1$,$...$, $w_{j-1}$,$ w_j+t, w_{j+1}, ..., w_n) \in \R_{\leq
0}^{n+1}$ the following hold:

(i) $\hat{w}*l$ is a generalized polyhedral metric on $\T$;

(ii) $\hat{w}*l$ satisfies the Delaunay condition;

(iii) $\alpha(\hat{w}) > \alpha(w)$.

Since $w_j<0$, any $ t \in (0, -w_j)$ will make $\hat{w} \in
\R_{\leq 0}^{n+1}$.

To see part (i), by Claim 1 and (\ref{1235}) which imply $a_j(w),
b_j(w), c_j(w), c_{j-1}(w)<\pi$,  the triangle $(\Delta
v_0v_jv_{j+1}, w*l)$ (or ($\Delta v_0v_jv_{j-1}, w*l)$)  is either
non-degenerate  or is degenerate  with $\pi$-angle at $v_j$, i.e.,
$b_{j+1}(w)=\pi$ (or $a_{j-1}(w)=\pi$ respectively). Therefore by
Proposition \ref{variation}(c), for small $t>0$, $\hat{w}*l$ is
still a generalized PL metric.

To see part (ii), we check the sum of opposite angles at
the following edges: $v_0v_{j-1}, v_0v_{j+1}$ and $v_0v_j$. At the
edge $v_0v_j$, due to (\ref{1235}) and continuity, we see
$a_j(\hat{w})+b_j(\hat{w}) < \pi$ for small $t>0$. At the edge
$v_0v_{j-1}$  (or similarly $v_0v_{j+1}$), by
 Proposition  \ref{variation}(c)
that $a_{j-1}(\hat{w}*l)$ and  $b_{j+1}(\hat{w}*l)$ are strictly
decreasing functions in $t>0$ and $b_{j-1}(\hat{w})=b_{j-1}(w)$,
we have
$$ a_{j-1}(\hat{w})+ b_{j-1}(\hat{w}) < a_{j-1}(w)+b_{j-1}(w) \leq
\pi.$$  Similarly, we have the Delaunay condition for $\hat{w}*l$
at the edge $v_0v_{j+1}$.

Finally, to see (iii), by Proposition  \ref{variation} and
(\ref{1235}), we have
$$\frac{d}{dt}|_{t=0}\alpha(\hat{w})=
\frac{d}{dt}|_{t=0}(c_j(\hat{w}))+\frac{d}{dt}|_{t=0}(c_{j-1}(\hat{w}))$$
$$=\cot(b_j(\hat{w}))+\cot(a_j(\hat{w})) >0.$$ Therefore, for small $t>0$,
$\alpha(\hat{w}) > \alpha(w)$.
\end{proof}




\begin{lemma}\label{ratio}
Let $(P_N, \T)$ be a star triangulation of an $N$-gon with
boundary vertices $v_1, ..., v_N$ labelled cyclically and one
interior vertex $v_0$ and $l: E(\T) \to \R_{>0}$ be a flat
generalized PL metric on $\T$. There is a constant $\lambda (l)$
depending on $l$ such that if $(P_N, \T, w*l)$ with $w:\{v_0, ...,
v_N\} \to \R$ is a generalized PL metric with zero curvature at
$v_0$,
then the ratio of edge lengths  satisfies
\begin{equation}\label{22} \frac{w*l(v_iv_{0})}{w*l(v_i v_{i+1})} \leq  \lambda(l)  \end{equation}
for all indices.
\end{lemma}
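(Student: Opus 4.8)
The plan is to reduce the global statement to a uniform bound on a single vertex star with zero curvature at $v_0$, using the maximum principle of Theorem \ref{max}. First I would set up the key observation: after the vertex scaling $w$, the cone angle $\alpha(w)$ at $v_0$ equals $2\pi$ (zero curvature), which is the same as the curvature condition that appears in Theorem \ref{max}(b), so the maximum principle applies to $w*l$ relative to a suitable reference metric. I want to show that both ratios $w*l(v_iv_0)/w*l(v_iv_{i+1})$ and their reciprocals cannot blow up. Since each edge length is of the form $e^{w_i+w_j}l(\cdot)$, the ratio $w*l(v_iv_0)/w*l(v_iv_{i+1}) = e^{w_0-w_{i+1}}\, l(v_iv_0)/l(v_iv_{i+1})$, so (\ref{22}) is equivalent to a lower bound on $w_{i+1}-w_0$ uniformly in $i$ (up to the fixed constants $l(v_iv_0)/l(v_iv_{i+1})$, which are bounded above and below in terms of $l$ alone). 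Thus the heart of the matter is to bound $\max_i w_i - \min_i w_i$ — or at least bound how far below $w_0$ the other $w_i$ can be — purely in terms of the combinatorics/geometry of $l$.

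The main tool is Theorem \ref{max} together with its quantitative refinement implicit in Lemma \ref{key}. Here is the strategy. Normalize $w_0 = 0$. Theorem \ref{max} already tells us that if $w_0 = \max_i w_i$ then $w$ is constant; but we need the case where $w_0$ need not be the max. I would handle the two one-sided estimates. For the lower bound on the $w_i$: suppose some $w_{i_0}$ is very negative. Then in the degenerate/near-degenerate limit the triangle $\Delta v_0 v_{i_0} v_{i_0\pm1}$ would violate the triangle inequality for $l$ (this is exactly the computation in Claim 1 of the proof of Lemma \ref{key}: $w_j < 0$ forces $c_j(w) < \pi$ because $l_{j,j-1} < l_j + l_{j-1}$ strictly, with a gap controlled by $l$). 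Quantifying that gap gives an explicit lower bound $w_i \geq -C_1(l)$. For the upper bound on the $w_i$: one cannot have $w_i$ too large either, since then $w*l(v_{i-1}v_i)$ would be enormous compared to the two edges $w*l(v_0v_{i-1})$, $w*l(v_0v_i)$ adjacent to it at $v_0$ (again a triangle inequality obstruction, now in the triangle $\Delta v_0 v_{i-1} v_i$, combined with the fact that the $w$-values at the other two vertices are bounded below by the previous step). Combining, $-C_1(l) \le w_i \le C_2(l)$ for all $i$, and then (\ref{22}) follows with $\lambda(l) = e^{C_1(l)+C_2(l)} \cdot \max_i \big(l(v_iv_0)/l(v_iv_{i+1})\big)$, the last factor being a finite constant depending only on $l$.

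The step I expect to be the main obstacle is making the upper bound $w_i \leq C_2(l)$ genuinely uniform, because a priori nothing forbids one $w_i$ from being large while its neighbors compensate — the cone-angle constraint at $v_0$ is a single scalar equation and does not obviously pin down individual $w_i$. The resolution I would pursue is that the zero-curvature condition $\alpha(w) = 2\pi$ plus the Delaunay hypothesis (or, if one does not assume Delaunay for $w*l$, simply the generalized-PL-metric triangle inequalities) forces a comparison: along the boundary polygon, consecutive edge lengths $w*l(v_iv_{i+1})$ cannot oscillate arbitrarily because they are all "seen" from $v_0$ subtending angles summing to $2\pi$; concretely, the edge $v_{i-1}v_i$ in $\Delta v_0v_{i-1}v_i$ satisfies $w*l(v_{i-1}v_i) < w*l(v_0v_{i-1}) + w*l(v_0v_i)$, and after dividing by $l$-quantities and using the lower bound on all $w$'s one gets $e^{w_{i-1}+w_i} \lesssim_l e^{w_0+w_{i-1}} + e^{w_0+w_i}$, i.e. $e^{w_i} + e^{w_{i-1}} \gtrsim_l e^{w_{i-1}+w_i}$ which, given $w_{i-1}, w_i \ge -C_1(l)$, immediately caps $\max(w_{i-1},w_i)$ from above. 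That is the crucial inequality and, once written out carefully with the correct constants coming from the fixed reference metric $l$, it should close the argument.
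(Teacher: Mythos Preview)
Your reduction to bounding $w_{i+1}-w_0$ from below is correct, but the argument you give for the lower bound $w_i\ge -C_1(l)$ does not go through. You invoke Claim~1 in the proof of Lemma~\ref{key}, but that claim only shows that when $w_j<0$ the angle $c_{j-1}(w)$ at $v_0$ is strictly less than $\pi$; it does \emph{not} bound $w_j$ from below. More concretely, the triangle inequalities in $\Delta v_0v_jv_{j\pm1}$ cannot by themselves prevent all the boundary $w_i$ from going to $-\infty$ simultaneously. For instance, if $l$ is the metric of a regular $N$-gon (all $l(v_0v_i)$ equal) and you set $w_0=0$, $w_i=-M$ for $i\ge1$, then each triangle $\Delta v_0v_iv_{i+1}$ has two sides of order $e^{-M}$ and a base of order $e^{-2M}$, which is a perfectly valid thin isoceles triangle for every $M>0$. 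The triangle inequalities hold for all $M$, yet $w_i\to-\infty$. What rules this out is precisely the zero-curvature condition at $v_0$: in that example every angle at $v_0$ tends to $0$, so their sum cannot stay equal to $2\pi$. Your proposed argument never uses $K_{v_0}(w*l)=0$, and without it the conclusion is false.

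The paper's proof is organized around exactly this missing ingredient. It sets $x_i=w*l(v_0v_i)$, $y_i=w*l(v_iv_{i+1})$, observes the scale-invariant identity $x_{i+2}/y_{i+1}=\lambda_i\,x_i/y_i$ with $\lambda_i>0$ depending only on $l$, and combines it with the triangle inequality $x_{i+1}\ge x_{i+2}-y_{i+1}$ to get $x_{i+1}/y_{i+1}\ge \lambda_i\,x_i/y_i-1$. Hence if one ratio $x_i/y_i$ blows up along a sequence of conformal factors, all of them do; then every angle at $v_0$ tends to $0$, contradicting $\sum_i a_i=2\pi$. This is short and uses the curvature hypothesis in an essential way. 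Your route via two-sided bounds on the $w_i$ could in principle be repaired, but only by feeding the cone-angle constraint back in; as written, the lower-bound step is a genuine gap.
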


\begin{figure}[ht!]
\begin{center}
\begin{tabular}{c}
\includegraphics[width=0.3\textwidth]{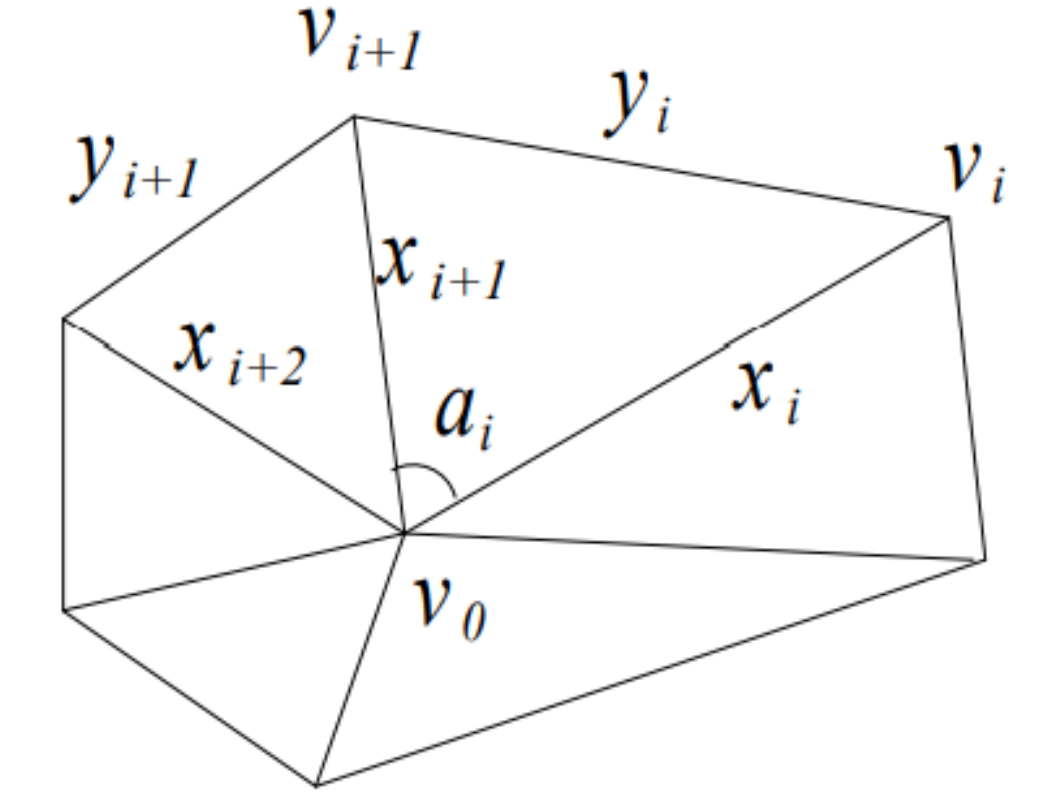}
\end{tabular}
\end{center}
\caption{Triangulated hexagon and length ratio} \label{3333}
\end{figure}

\begin{proof} Let $x_i(w)=w*l(v_0v_i)$ and $y_i(w)=w*l(v_iv_{i+1})$ be
the edge lengths
 in the metric $w*l$ where $v_{N+1}=v_1$.
By definition,
\begin{equation}\label{11}
 \frac{x_{i+2}}{y_{i+1}}=\lambda_i\frac{x_i}{y_{i}},
\end{equation} where $\lambda_i>0$ depends on $l$.
Then
\begin{equation}\label{601}
\frac{x_{i+1}}{y_{i+1}}\geq\frac{x_{i+2}-y_{i+1}}{y_{i+1}}=\frac{x_{i+2}}{y_{i+1}}-1=\lambda_i\frac{x_i}{y_{i}}-1.
\end{equation}\
We prove by contradiction. If the result of lemma 3.3 is not true, than there exists a sequence of conformal factors $w^{(n)}$ such that
$$
 \frac{x_i(w^{(n)})}{y_i(w^{(n)})}\rightarrow\infty,
$$
for some $i$. Without loss of generality, assume $i=1$ and
then by (\ref{601}) inductively we have
$$
\frac{x_{2}(w^{(n)})}{y_{2}(w^{(n)})}\rightarrow\infty,\quad
\frac{x_{3}(w^{(n)})}{y_{3}(w^{(n)})}\rightarrow\infty,...,
\frac{x_{N}(w^{(n)})}{y_{N}(w^{(n)})}\rightarrow\infty.
$$
Then the angle $a_i(w^{(n)})$ at $v_0$ in the triangle
 $\Delta v_0v_iv_{i+1}$ (in $w^{(n)}*l$ metric) converge to $0$, for any $i$.
But that contradicts the fact that the curvature
$2\pi - \sum_{i=1}^N a_i(w^{(n)})$  at $v_0$ is zero.
\end{proof}

The next result concerns linear discrete conformal factor and
spiral hexagonal triangulations.  It is a counterpart of Doyle
spiral circle packing in the discrete conformal setting. Unlike
Doyle spiral circle packing, not all choices of linear functions
produce generalized PL metrics.

We begin by recalling the developing maps. If $(S, \T, l)$ is a
flat generalized PL metric on a simply connected surface $S$
(i.e., $K_v=0$ for all interior vertices $v$), then a \it
developing map \rm $\phi: (S, \T, l) \to \C$ for $(\T,l)$ is an
isometric immersion determined by $|\phi(v)-\phi(v')|=l(vv')$ for
$v \sim v'$. It is constructed as follows. Fix a generalized
triangle $t \in \T$ and isometrically embeds $t$ to $\C$. This
defines $\phi|_t$.  If $s$ is a generalized triangle sharing a
common edge $e$ with $t$, we can extend $\phi|_t$ to $\phi|_{t
\cup s}$ by isometrically embedding $s$ to $\phi(s) \subset \C$
sharing the edge $\phi(e)$ with $\phi(t)$ such that $\phi(s)$ and
$\phi(t)$ are on different sides of $\phi(e)$. Since the surface
is simply connected, by the monodromy theorem, we can keep
extending $\phi$ to all triangles in $\T$ and produce a well
defined isometric immersion.
As a convention, if $\tau$ is triangle in  $\T$ and $l$ is a
generalized PL metric on $\T$, we use $(\tau, l)$ to denote the
induced generalized PL metric on $\tau$.

Given a lattice $L$ in $\C$, there exists a Delaunay triangulation
$\T_{st}=\T_{st}(L)$ of $\C$ with vertex set $L$ such that the
$\T_{st}$ is invariant under the translation action of $L$. In
particular $\T_{st}$ descends to a 1-vertex triangulation of the
torus $\C/L$. Therefore, the degree of each vertex $v \in \T_{st}$
is six, i.e., this triangulation is topologically the same as the
standard hexagonal triangulation of $\C$. Let $l_{0}: E(\T_{st})
\to \R_{>0}$ be the edge length function of $(\C, \T_{st}(L),
d_{st})$ where $d_{st}$ is the standard flat metric on $\C$. Let
$\tau$ be a triangle in $\T_{st}$ with vertices $0, u_1, u_2$.
Then $L =u_1\Z+u_2\Z$ and $\{u_1, u_2\}$ is called a \it geometric
basis \rm of $L$. Note that two vertices $v, v' \in L$ are joint
by an edge $e \in \T_{st}$ if and only if $v-v' \in \{\pm u_1, \pm
u_2, \pm (u_1-u_2)\}$.

\begin{proposition}\label{linear} Suppose $(\C,\T_{st}, l_0)$ is a
 hexagonal Delaunay triangulation of the plane with vertex set a lattice $V=u_1\Z +u_2\Z
$ where $\{u_1, u_2\}$ is a geometric basis.  Let  $w: V \to \R$
be a non-constant linear function $w(n u_1 +m u_2)= n
\ln(\lambda)+ m\ln(\mu)$, $m,n, \in \Z$, such that $w*l_{0}$ is a
generalized Delaunay PL metric on $\T_{st}$. Then the following hold.

(a) The generalized PL metric $(\T_{st}, w*l_{0})$ is flat.

Let $\phi$ be  the developing map for the flat metric
$(\T_{st},w*l_{0})$.

 (b) If there exists a  non-degenerate  triangle in the
generalized PL metric $w*l_{0}$, then there are two distinct
non-degenerate triangles $\sigma_1$ and $\sigma_2$ in $(\T_{st},
w*l_{0})$ such that $\phi(int(\sigma_1)) \cap \phi(int(\sigma_2))
\neq \emptyset$.

(c) Suppose all triangles in $w*l_0$ are degenerate.   Then there
exists an automorphism $\psi$ of the triangulation $\T_{st}$ such
that $w (\psi (n u_1+m u_2 )) =  n \ln(\gamma_1(V))+m\ln(\gamma_2(V))$
where $\gamma_i(V)$ are two explicit numbers depending only on $V$.
\end{proposition}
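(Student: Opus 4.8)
The plan is to prove (a), (b), (c) in that order, since (a) supplies the flatness without which the developing map $\phi$ used in (b) and (c) is not even defined.

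\emph{Part (a).} A direct computation from Definition~\ref{dc} shows that, in the metric $w*l_0$, every triangle of $\T_{st}$ of combinatorial type $\{q,q+u_1,q+u_2\}$ has edge lengths proportional --- with a factor depending only on $q$ --- to the fixed triple $(\lambda|u_1|,\ \mu|u_2|,\ \lambda\mu|u_1-u_2|)$, and every triangle of the complementary type is proportional to one fixed triple as well; hence $w*l_0$ has exactly two similarity classes of triangles. Going once around an arbitrary vertex $v_0$, its six incident triangles alternate between the two types (this is the only place some bookkeeping is needed, and it is routine), and the three of the first type present at $v_0$ the three corner angles $\alpha_1,\alpha_2,\alpha_3$ of that similarity class, one each, while the three of the second type present $\beta_1,\beta_2,\beta_3$. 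Since a Euclidean --- possibly degenerate --- triangle has angle sum $\pi$, the cone angle of $w*l_0$ at $v_0$ equals $\pi+\pi=2\pi$, so $K_{w*l_0}(v_0)=0$; as $v_0$ was arbitrary, $(\T_{st},w*l_0)$ is flat and $\phi$ is defined.

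\emph{Part (b).} The translations $T(z)=z+u_1$ and $T'(z)=z+u_2$ are simplicial automorphisms of $\T_{st}$ preserving $l_0$, and since $w$ changes by the additive constant $\ln\lambda$ under $T$ the computation in (a) gives $(w*l_0)(T(vv'))=\lambda^2(w*l_0)(vv')$. Thus $\phi\circ T$ and $\lambda^2\phi$ are both developing maps for the flat metric $\lambda^2(w*l_0)$ on the simply connected complex $(\C,\T_{st})$, so they differ by a Euclidean isometry (orientation preserving, by orienting $\phi$ consistently using the non-degenerate triangle $\sigma_0$); this gives a complex affine map $A_1$ with $\phi\circ T=A_1\circ\phi$ and $|A_1'|=\lambda^2$, and likewise $A_2$ with $\phi\circ T'=A_2\circ\phi$, $|A_2'|=\mu^2$. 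The maps commute because $T,T'$ do. Relabelling so that $\lambda\neq1$ (possible since $(\lambda,\mu)\neq(1,1)$), $A_1$ has a unique fixed point $p$, and $A_2(p)$ is again fixed by $A_1$, so $A_2(p)=p$; if $A_2$ is the identity then $\phi$ is $u_2$-periodic and the distinct triangles $\sigma_0,T'\sigma_0$ already have equal $\phi$-images, so we may assume $A_1,A_2$ share the fixed point $p$. Post-composing $\phi$ with a translation to put $p$ at $0$, we have $A_i(z)=a_iz$ with $|a_1|=\lambda^2$, $|a_2|=\mu^2$, and $\phi(int(T^{n}T'^{m}\sigma))=a_1^{n}a_2^{m}\,\phi(int(\sigma))$ for every triangle $\sigma$. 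Writing $U_0=\phi(int(\sigma_0))$, a bounded nonempty open set, we are done if $0\in U_0$ (then $\sigma_0$ and $T\sigma_0$ overlap), so assume $0\notin U_0$; it then suffices to produce $(n,m)\neq(0,0)$ with $a_1^{n}a_2^{m}$ arbitrarily close to $1$, for then $a_1^{n}a_2^{m}U_0$ meets $U_0$ while $\sigma_2:=T^{n}T'^{m}\sigma_0$ is a non-degenerate triangle --- of the same similarity type as $\sigma_0$ --- distinct from $\sigma_1:=\sigma_0$. Such $(n,m)$ exist by a dichotomy: passing to logarithms gives the homomorphism $\Z^2\to\R\times(\R/2\pi\Z)$, $(n,m)\mapsto(2n\ln\lambda+2m\ln\mu,\ n\arg a_1+m\arg a_2)$, whose image is a subgroup which, if not discrete, accumulates at $0$, and if discrete still contains elements of modulus one and arbitrarily small argument, found by first killing the (rank at most one, rational-slope) modulus direction and then, if a nontrivial rotation survives, using its rational order or its density on the circle. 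Verifying that this case distinction always yields the required element is the one mildly delicate point of (b).

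\emph{Part (c).} If every triangle of $w*l_0$ is degenerate, $\phi$ carries each triangle onto a segment, and since adjacent triangles share a nondegenerate segment all triangles develop into a single line $\ell$; identifying $\ell$ with $\R$ yields $h:V(\T_{st})\to\R$ with $|h(v)-h(v')|=e^{w(v)+w(v')}l_0(vv')$ for $v\sim v'$. The equivariance argument of (b), which for a metric developing into a line produces affine self-maps of $\R$ with linear parts $\epsilon_1\lambda^2$ and $\epsilon_2\mu^2$ for signs $\epsilon_i\in\{\pm1\}$, again gives a common fixed point and, after normalization, $h(nu_1+mu_2)=\epsilon_1^{n}\epsilon_2^{m}\lambda^{2n}\mu^{2m}$. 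Imposing the edge-length identity along the three edge directions $u_1$, $u_2$, $u_1-u_2$ yields
\begin{equation*}
\lambda|u_1|=|1-\epsilon_1\lambda^2|,\qquad \mu|u_2|=|1-\epsilon_2\mu^2|,\qquad \lambda\mu|u_1-u_2|=|\mu^2-\epsilon_1\epsilon_2\lambda^2|,
\end{equation*}
a system in $(\lambda,\mu,\epsilon_1,\epsilon_2)$ whose coefficients $|u_1|,|u_2|,|u_1-u_2|$ are determined by $V$. It remains to (i) exclude the sign patterns incompatible with the genuine triangle inequalities that $l_0$ satisfies --- for instance $\epsilon_1=\epsilon_2=1$ forces $\sinh$-relations among $|u_1|,|u_2|,|u_1-u_2|$ that violate one of them; (ii) for the surviving pattern(s) solve the first two equations, each of which has two positive solutions that are mutual reciprocals, obtaining $\lambda,\mu$ as explicit functions of $V$, the third equation being the compatibility constraint on $V$ under which all triangles can be degenerate; and (iii) observe that the finitely many solutions $(\lambda,\mu)$ so obtained form a single orbit under $\operatorname{Aut}(\T_{st})$, which is generated by the lattice translations and the finite group of automorphisms fixing $0$, the latter acting on $(\ln\lambda,\ln\mu)$ by linear maps. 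A suitable automorphism $\psi$ in that orbit then carries $w$ to the standard representative $w(\psi(nu_1+mu_2))=n\ln\gamma_1(V)+m\ln\gamma_2(V)$. The sign-and-branch bookkeeping together with the identification of this orbit is the principal obstacle, here and in the proposition as a whole.
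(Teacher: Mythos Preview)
Your arguments for (a) and (b) are essentially the paper's: two similarity classes of triangles giving cone angle $2\pi$ at every vertex, and commuting affine lifts $A_1,A_2$ of the lattice translations with a common fixed point, normalized to scalar multiplications.  The paper disposes of the end of (b) more cleanly --- if $G=\langle A_1,A_2\rangle\cong\Z^2$ then its action on $\C^*$ by scalars cannot be properly discontinuous, since $\C^*/\langle A_1\rangle$ is a compact torus and an infinite cyclic group never acts properly discontinuously on a compact space --- but your dichotomy, once the cases are sorted out, reaches the same conclusion.

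Part (c), however, has a genuine error.  After translating the common fixed point of the affine maps to $0$ you write $h(nu_1+mu_2)=\epsilon_1^{\,n}\epsilon_2^{\,m}\lambda^{2n}\mu^{2m}$, i.e.\ you are setting $h(0)=1$.  That normalization is illegitimate: a developing map into a line is determined only up to an isometry of $\R$, the translation has already been spent moving the fixed point to $0$, and the residual sign does not let you rescale $|h(0)|$.  Keeping $c:=h(0)$ as an unknown, your three edge identities become
\[
\lambda|u_1|=|c|\,\bigl|1-\epsilon_1\lambda^2\bigr|,\qquad
\mu|u_2|=|c|\,\bigl|1-\epsilon_2\mu^2\bigr|,\qquad
\lambda\mu|u_1-u_2|=|c|\,\bigl|\epsilon_1\lambda^2-\epsilon_2\mu^2\bigr|,
\]
and eliminating $|c|$ leaves only \emph{two} relations in $(\lambda,\mu)$; the third equation is not, as you assert, a compatibility constraint on $V$.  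In particular your claim that the first equation alone determines $\lambda$ as a function of $|u_1|$ (with $\mu$ likewise determined by $|u_2|$) is false: the paper's $\gamma_1(V)$ is the positive root of $b_2b_3\lambda^2+(b_3^2-b_1^2-b_2^2)\lambda-b_2b_3=0$, which depends on all three lengths $b_1=|u_1|$, $b_2=|u_2|$, $b_3=|u_1-u_2|$.

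The paper's route to (c) is also different in kind.  It does not use the developing map at all: it records which vertex of each of the two similarity types carries the $\pi$-angle, invokes the Delaunay hypothesis (which you never use in (c)) to rule out one of the configurations, reduces the two surviving configurations to one by an explicit automorphism of $\T_{st}$, and then reads off the two degeneracy equations $\lambda b_1+\mu b_2=\lambda\mu b_3$ and $\mu b_1=\lambda b_2+b_3$ directly from ``longest edge $=$ sum of the other two''.  The quadratic above follows by eliminating $\mu$.  If you want to salvage your line-developing approach, you must carry $|c|$ through, use Delaunay to cut the sign/flat-vertex cases, and then check that the resulting coupled system reduces to the same quadratic.
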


We remark that parts (a) and (b) for the lattice $\Z+e^{2\pi
i/3}\Z$ were proved in \cite{gsw}.

\begin{proof}  Consider two automorphisms $A$ and $B$ of the topological triangulation
$ \T_{st}$ defined by $A(v)=v+u_1$  and $B(v)=v+ u_2$ for $v \in
V$. By definition, we have $AB=BA$ and $A,B$ generate the group
$<A,B> \cong \Z^2$ acting on $\T_{st}$. Any triangle in $\T_{st}$
is equivalent, under the action of  $<A,B>$,  to exactly one of
the two triangles $T_1$ or $T_2$ where the vertices of $T_1$ are
$0, u_1, u_2$ and the vertices of $T_2$ are $0, -u_1, -u_2$. In
the generalized PL metric $w*l_0$, the maps $A$ and $B$ satisfy
$w*l_0(A(e))=\lambda^2 w*l_0(e)$ and $w*l_0(B(e))=\mu^2 w*l_0(e)$
for each edge $e \in \T$. It follows that for any triangle $\tau
\in \T_{st}$, the generalized triangle $(A(\tau), w*l_0)$ (resp.
$(B(\tau), w*l_0)$) is the scalar multiplication of $(\tau,
w*l_0)$ by $\lambda^2$ (resp. by $\mu^2$).  Hence
 there are only two similarity types of triangles in
$(\C, \T_{st}, w*l_0)$.  For each $v \in V$, the six angles at $v$
are congruent to the six inner angles in $T_1$ and $T_2$ in
$w*l_0$ metric. Therefore, $(\T, w*l_0)$ is a flat metric. See
Figure \ref{f666}(b).


\begin{figure}[ht!]
\begin{center}
\begin{tabular}{c}
\includegraphics[width=0.9\textwidth]{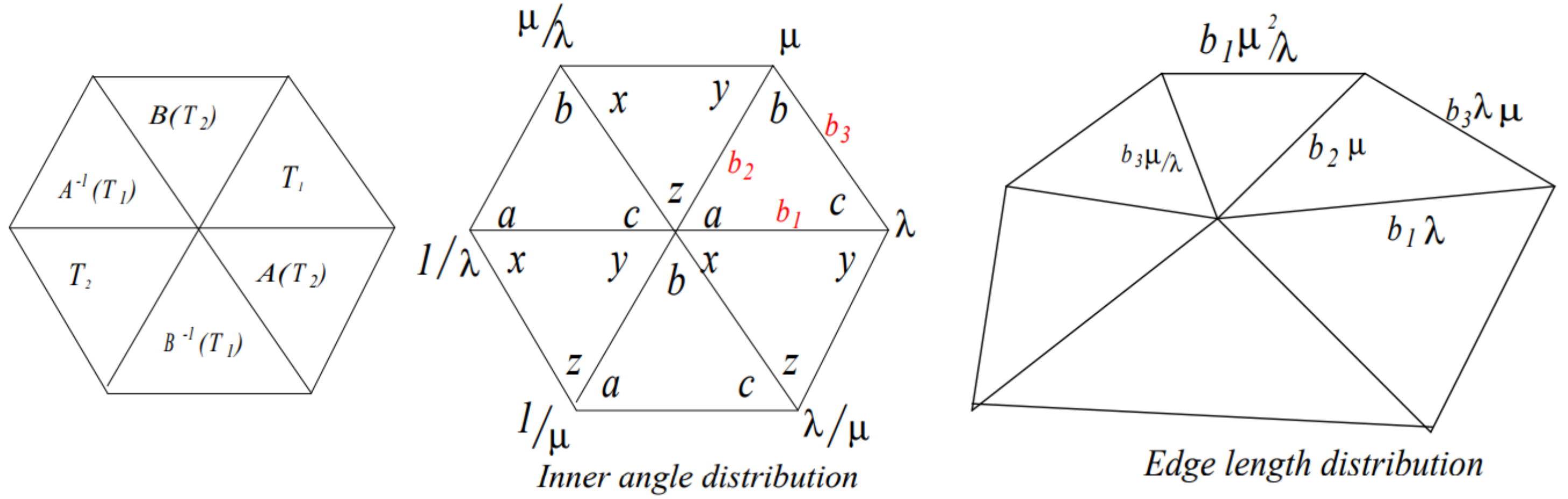}
\end{tabular}
\end{center}
\caption{Flatness of spiral hexagonal triangulations} \label{f666}
\end{figure}

 By the assumption that
$w$ is not a constant, we have $(\lambda, \mu) \neq (1,1)$. Say
$\lambda \neq 1$.
Using the developing map $\phi$, there exist two complex affine
maps $\alpha$ and $\beta$ of the complex plane $\C$ such that
$\phi A =\alpha \phi$ and $\phi B =\beta \phi$. Since $A$ is a
scaling by the factor $\lambda^2 \neq 1$ and $\phi$ is a local
isometry, the affine map $\alpha$ is of the form $\alpha(z)
=\lambda^*  z+ c$ where $|\lambda^*|=\lambda^2\neq 1$ and $\alpha$
has a unique fixed point $p\in \C$.  
By $AB=BA$, it follows $\alpha \beta =\beta \alpha$. Therefore,
from $\beta(p)=\beta \alpha(p)=\alpha\beta(p)$, we conclude
$\beta(p)=p$. After replacing the developing map $\phi$ by $\rho
\circ \phi$ for an isometry $\rho$ of $\C$, we may assume that
$\alpha$ and $\beta$ both fix
 $0$, i.e., $\alpha(z) =\lambda^* z$ and $\beta(z)=\mu^* z$ are
both scalar multiplications.  Let $G=<\alpha, \beta>$ be the
abelian group generated by $\alpha, \beta$ which acts on $\C$ by
scalar multiplications.

To see part (b), let $\Omega$ be the image $\phi(\C)$ of the
developing map which is invariant under the action of $G$. By the
assumption that there are non-degenerate triangles in
$(\T_{st},w*l_0)$,  the image $\Omega$ has non-empty interior.
There are two cases we have to consider. In the first case, there
exists a pair of integers $(n,m) \neq (0,0)$ so that $\alpha^n
\beta^m$ is the identity element in the group $G$. In this case,
we take $\sigma_1$ to be any non-degenerate triangle and
$\sigma_2=A^nB^m(\sigma_1)$. By definition, we have
$\phi(\sigma_1)=\phi(\sigma_2)$. Therefore, the result holds. In
the second case that  for all $(n, m) \neq (0,0)$, $\alpha^n
\beta^m \neq id$, i.e., the group $G$ is isomorphic to $\Z^2$.
Since both $\alpha(z)$ and $\beta(z)$ are scalar multiplications,
this implies that the action of the group $G$ on $int(\Omega)$ is
not discontinuous. In particular, for any non-empty open set $U
\subset \Omega$, there is $\alpha^n\beta^m \in G-\{id\}$ so that
$\alpha^n \beta^m (U) \cap U \neq \emptyset$. Take $\sigma_1$ to
be a  non-degenerate  triangle, $U=\phi(int(\sigma_1))$ and
$\sigma_2 =A^nB^m(\sigma_1)$. Then we have $\phi(int(\sigma_1))
\cap \phi(int(\sigma_2)) \neq \emptyset$.

To see part (c), since each triangle is degenerate, the inner
angles $a,b,c$ and $x,y,z$ of two triangles $T_1$ and $T_2$ are
$0$ or $\pi$ as shown in Figure \ref{f666}(b). Composing with  an
automorphism
 of $\T_{st}$, we may assume that $a=\pi$, and then by the Delaunay condition, $y\neq\pi$.

 There are two cases depending on
 $(x,y,z)=(\pi,0,0)$ or $(0,0,\pi)$. The two cases differ by the automorphism
$\rho$ of the lattice $u_1\Z+ u_2\Z$ and of $\T_{st}$ such that
$\rho(u_1)=u_2$, $\rho(u_2)=u_2-u_1$ and $\rho(0)=0$. Thus it
suffices to consider the  case: $z=\pi$.
Let the lengths of $u_1$, $u_2$ and $u_2-u_1$ in $l_0$-metric be
$b_1$, $b_2$ and $b_3$ respectively. The lengths of the corresponding edges in $w*l_0$ metric are $\lambda b_1$, $\mu b_2$ and $\lambda \mu b_3$.
 By the same computation, one works out the edge lengths of the triangle with vertices $0$,$u_2$ and $u_2-u_1$ in $w*l_0$ metric to be $\frac{\mu^2}{\lambda}b_1$, $\mu b_2$ and $\frac{\mu}{\lambda}b_3$. See Figure \ref{f666}(c).

We obtain two equations for edge lengths of
degenerate triangles: $\lambda b_1+\mu b_2=\lambda \mu b_3$ (due
to $a=\pi)$ and $ \frac{\mu^2}{\lambda} b_1 = \mu b_2+
\frac{\mu}{\lambda}b_3$ (due to $z=\pi)$. See Figure \ref{f666}(c).
These are same as $\lambda b_1+\mu b_2=\lambda \mu b_3$ and $\mu
b_1=\lambda b_2+b_3$. Solving $\mu$ in terms of $\lambda$, we
obtain a quadratic equation in $\lambda$:
\begin{equation}\label{solu}  b_2b_3 \lambda^2
+(b_3^2-b_1^2-b_2^2)\lambda -b_2b_3=0.\end{equation} Since
$b_i>0$, this equation has a unique positive solution which we
call $\gamma_1(V)$. The solution in $\mu$ is called
$\gamma_2(V)$.
 \end{proof}

\begin{figure}[ht!]
\begin{center}
\begin{tabular}{c}
\includegraphics[width=0.4\textwidth]{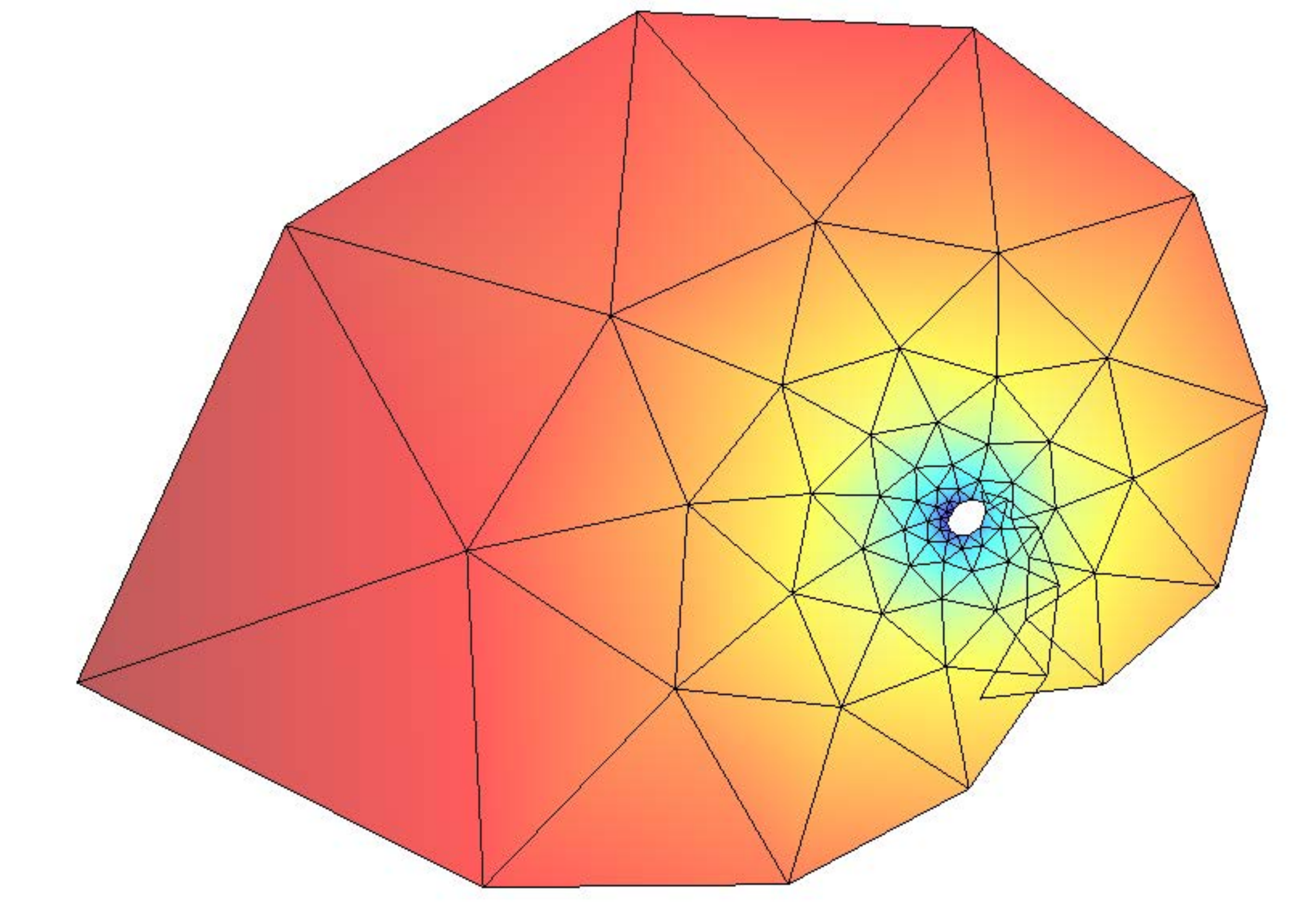}
\end{tabular}
\end{center}
\caption{Spiral hexagonal triangulations} \label{p}
\end{figure}

\section{Rigidity of hexagonal triangulations of the plane}

We begin with,
\begin{definition}
A flat generalized PL metric on a simply connected surface $(X,
\T, l)$ with developing map $\phi$ is said to be \it embeddable
\rm into $\C$ if for every simply connected finite subcomplex $P$
of $\T$, there exists a sequence of flat PL metrics on $P$ whose
developing maps $\phi_n$ converge uniformly to $\phi|_P$ and
$\phi_n: P \to \C$ is an embedding.
\end{definition}

For instance, all geometric triangulations of open sets in $\C$ are embeddable.  However, the spiral flat triangulations produced in Proposition 3.4 are not embeddable. The main result in this section works for embeddable flat PL metrics only.

The following lemma is a consequence of definition.

\begin{lemma}\label{embdev} Suppose $(X,\T,l)$ is a flat generalized PL metric on
a simply connected surface with a developing map $\phi$.

(a) Suppose $\phi$ is embeddable. If $t_1$, $t_2$ are two distinct
non-degenerate triangles or two distinct edges in $\T$, then
$\phi(int(t_1))\cap \phi(int(t_2)) =\emptyset$.

(b) If $\phi$ is  the pointwise convergent limit $\lim_{n\to
\infty}\psi_n$ of the developing maps $\psi_n$ of  embeddable flat
generalized PL metrics $(X, \T, l_n)$, then $(X,\T, l)$ is
embeddable.
\end{lemma}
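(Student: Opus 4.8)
The plan is to unwind the definition of embeddability in both parts and use elementary properties of developing maps together with an approximation/limiting argument. For part (a), suppose $t_1,t_2$ are two distinct non-degenerate triangles (the case of two distinct edges is handled identically, or by slightly thickening the edges), and suppose for contradiction that $\phi(int(t_1))\cap\phi(int(t_2))\neq\emptyset$. Pick a point $z_0$ in this intersection; since $\phi$ restricted to each closed triangle is an affine embedding, $z_0$ lies in the interior of both $\phi(t_1)$ and $\phi(t_2)$, so there is an open disk $D(z_0,\epsilon)$ contained in $\phi(int(t_1))\cap\phi(int(t_2))$. Now let $P$ be any finite simply connected subcomplex of $\T$ containing both $t_1$ and $t_2$. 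By embeddability there is a sequence of flat PL metrics on $P$ whose developing maps $\phi_n$ converge uniformly to $\phi|_P$ and each $\phi_n$ is an embedding. Uniform convergence of the $\phi_n$, which are affine on each triangle, forces the vertices of $\phi_n(t_i)$ to converge to those of $\phi(t_i)$; hence for $n$ large, $\phi_n(int(t_1))\cap\phi_n(int(t_2))$ still contains a disk of radius roughly $\epsilon/2$ about a point near $z_0$. This contradicts injectivity of $\phi_n$ on $P$, since $int(t_1)$ and $int(t_2)$ are disjoint subsets of $P$. Therefore no such $z_0$ exists.

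For part (b), let $P$ be an arbitrary finite simply connected subcomplex of $\T$; I must produce flat PL metrics on $P$ whose developing maps converge uniformly to $\phi|_P$ and are embeddings. Since $\phi=\lim_n\psi_n$ pointwise and all maps in question are affine on each triangle of the finite complex $P$, pointwise convergence on the (finitely many) vertices of $P$ upgrades automatically to uniform convergence on $P$; thus $\psi_n|_P \to \phi|_P$ uniformly. Now for each fixed $n$, the metric $(X,\T,l_n)$ is embeddable, so applying the definition to the finite subcomplex $P$ yields a flat PL metric on $P$ — call its developing map $\psi_{n,m}$, $m\to\infty$ — with $\psi_{n,m}\to\psi_n|_P$ uniformly and each $\psi_{n,m}$ an embedding of $P$ into $\C$. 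A standard diagonal argument then selects $m=m(n)$ large enough that $\|\psi_{n,m(n)}-\psi_n|_P\|_\infty < 1/n$; combined with $\psi_n|_P\to\phi|_P$ this gives $\psi_{n,m(n)}\to\phi|_P$ uniformly, with each $\psi_{n,m(n)}$ an embedding, which is exactly the required approximating sequence. Hence $(X,\T,l)$ is embeddable.

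The main obstacle, such as it is, is purely bookkeeping: being careful that uniform convergence of piecewise-affine developing maps on a \emph{finite} complex is equivalent to convergence of the finitely many vertex images, and organizing the double sequence in part (b) via a diagonal extraction so that the approximating metrics in the definition of embeddability for the limit can be drawn from the (already approximating) metrics for the $l_n$. There is a minor technical point in part (a) that "two distinct edges" must also be covered — one can either run the disk argument in a small two-dimensional neighborhood of an interior point of each edge (valid since distinct edges have disjoint relative interiors inside the simply connected $P$), or simply note that the relative interior of an edge is a limit of interiors of incident triangles and reduce to the triangle case. Neither requires genuine new ideas beyond what is used for triangles.
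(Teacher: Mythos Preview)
Your proof is correct and follows essentially the same approach as the paper: part (a) is the same contradiction argument via a finite simply connected subcomplex and stability of open intersection under uniform convergence, and part (b) is the same diagonal extraction, with the paper phrasing it as ``if $\lim_n a_n=a$ and $\lim_m b_{n,m}=a_n$ then $a=\lim_j b_{j,n_j}$ for some subsequence.'' Your write-up supplies more detail than the paper's terse version (in particular the explicit upgrade from pointwise to uniform convergence via affineness on finitely many triangles, and the separate remark on the edge case), but there is no substantive difference in strategy.
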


\begin{proof} To see (a), if otherwise that
$\phi(int(t_1)) \cap \phi(int(t_2)) \neq \emptyset$, then $\phi$
is not embeddable. Indeed, take $P$ to be a finite simply
connected subcomplex containing $t_1$ and $t_2$, then the
developing maps $\phi_n$ defined on $P$ which converge uniformly
to $\phi|_P$ must satisfy $\phi_n(int(t_1)) \cap \phi_n(int(t_2))
\neq \emptyset$ for $n$ large. This contradicts that $\phi_n$ are
embedding.

Part (b) follows from the fact that $\psi_n$ converges to $\phi$
uniformly on compact subsets and the fact that if $\lim_{n \to
\infty}a_n=a$
 and $\lim_{m\to \infty} b_{n,m}=a_n$, then $a=\lim_{j\to \infty} b_{j, n_j}$ for some
 subsequence.

\end{proof}

Let $\T_{st}$ be a  hexagonal Delaunay triangulation of the plane
$S=\C$ with vertex set the lattice $V=\{ u_1n+ u_2m | n, m \in
\Z\}$ and  $l_0: E(\T_{st}) \to \R_{>0}$ be the edge length
function associated to $(S, \T_{st}, d_{st})$.  Given a flat
generalized PL metric $(S, \T_{st}, l)$, its normalized developing
map $\phi=\phi_l: S \to \C$ is a developing map such that
$\phi(0)=0$ and $\phi(u_1)$ is in the positive x-axis. Suppose
$\{u_1, u_2\}$ is a geometric basis of the lattice $u_1\Z+u_2\Z$.
Two vertices $v,v'$ are adjacent in $\T_{st}$, i.e., $v \sim v'$,
if and only if $v=v'+\delta$ for some $\delta \in \{\pm u_1, \pm
u_2, \pm(u_1-u_2)\}$. Given two vertices $v,v' \in V$, the
combinatorial distance $d_c(v,v')$ between $v,v'$ is the length of
the shortest edge path joining them.

 The goal of this section is to
prove the following stronger version of theorem \ref{rigid}.

\begin{theorem}\label{rigidity}  Suppose $(S, \T_{st}, l_0)$ is a
hexagonal Delaunay triangulation whose vertex set is a lattice in
$\C$  and $(S, \T_{st}, w*l_0)$ is a flat generalized Delaunay PL
metric on $\T_{st}$. If $(S, \T_{st}, w*l_0)$ is embeddable into
$\C$, then $w$ is a constant function.
\end{theorem}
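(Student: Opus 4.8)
The plan is to use the Maximum Principle (Theorem~\ref{max}) together with the spiral-triangulation structure theorem (Proposition~\ref{linear}) to rule out non-constant conformal factors $w$. First I would argue by contradiction: assume $w$ is non-constant. The overall strategy mirrors the Rodin--Sullivan rigidity argument for hexagonal circle packings: one shows that a non-constant conformal factor on the full lattice would force either unbounded growth (contradicting flatness/embeddability via the ratio Lemma~\ref{ratio}) or a periodic/linear structure that is itself incompatible with embeddability.

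The key steps, in order, would be as follows. \textbf{Step 1 (a maximum principle on finite patches).} Apply Theorem~\ref{max} to the star triangulation around each interior vertex $v$ of $\T_{st}$ with the six neighbors $v_1,\dots,v_6$. Since $w*l_0$ is flat, $K_0(w*l_0)=0$, and since $l_0$ is the standard hexagonal metric with $K_0(l_0)=0$ as well, hypothesis (b) of Theorem~\ref{max} holds with equality. Theorem~\ref{max} then says: if $w(v)\geq w(v_i)$ for all six neighbors $v_i$, then $w$ is constant on $\{v,v_1,\dots,v_6\}$. By connectedness of $\T_{st}$, this propagates: $w$ cannot attain a strict local maximum anywhere, and if $w$ attains its supremum at a vertex, $w$ is globally constant. \textbf{Step 2 (ruling out bounded non-constant $w$).} If $w$ is bounded above but non-constant, then $\sup w$ is either attained (handled by Step~1) or approached along a sequence $v_n\to\infty$. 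Here I would use a compactness/normal-families argument: translate the patches around $v_n$ back to the origin; by the $L$-periodicity of $\T_{st}$ and $l_0$, and using the ratio Lemma~\ref{ratio} to control edge-length ratios uniformly (so the developing images of these finite patches do not degenerate), extract a limiting flat generalized PL metric $w_\infty * l_0$ on all of $\T_{st}$, which by Lemma~\ref{embdev}(b) is again embeddable, and for which $w_\infty$ attains its supremum at $0$ (after normalization); Step~1 forces $w_\infty$ constant, which upon unwinding the limit contradicts $w$ being non-constant near $v_n$. \textbf{Step 3 (ruling out unbounded $w$).} The remaining case is that $w$ is unbounded (above or below). Here the spiral picture enters. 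Along a ray in the lattice where $w\to+\infty$, the edge lengths $w*l_0$ grow geometrically; I would compare the local combinatorics to the linear/spiral model of Proposition~\ref{linear}: a limit of rescaled patches would be a flat generalized PL metric with a \emph{linear} conformal factor (the growth rate along the two basis directions giving $\ln\lambda,\ln\mu$). By Proposition~\ref{linear}(b), if this limit has any non-degenerate triangle, its developing map has overlapping triangle interiors, contradicting embeddability via Lemma~\ref{embdev}(a); and by Proposition~\ref{linear}(c), if all triangles are degenerate, the developing image is contained in a line, so again there are coincident edge-interiors under the $\Z^2$-action, contradicting Lemma~\ref{embdev}(a). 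Either way we reach a contradiction.

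\textbf{The main obstacle} I expect is Step~2--3: making the compactness/rescaling arguments rigorous, i.e., showing that the family of translated (and in Step~3, rescaled) finite sub-patches of $(S,\T_{st},w*l_0)$ is precompact in the appropriate sense, with a limit that is still a \emph{flat generalized Delaunay} metric and still \emph{embeddable}. The ratio Lemma~\ref{ratio} is precisely the tool that prevents the edge-length vectors from running off to $0$ or $\infty$ in a way that would destroy the generalized-triangle (triangle-inequality) condition, so the argument must be organized so that Lemma~\ref{ratio} applies uniformly to every star in the patch. A secondary subtlety is that the limiting metric is a \emph{generalized} PL metric (degenerate triangles allowed), which is exactly why Proposition~\ref{linear} was stated for generalized metrics and why the degenerate case~(c) must be handled separately; keeping track of which triangles degenerate in the limit, and invoking the correct part of Lemma~\ref{embdev}, is where the bookkeeping is delicate.
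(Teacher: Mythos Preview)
Your overall architecture (maximum principle $\Rightarrow$ compactness/limiting $\Rightarrow$ reduce to a linear conformal factor $\Rightarrow$ contradict embeddability via Proposition~\ref{linear}) is exactly the paper's strategy, and your identification of Lemma~\ref{ratio} and Lemma~\ref{embdev}(b) as the tools controlling the limit is correct. However, there is a genuine gap in how you pass from a general non-constant $w$ to a \emph{linear} limit, and your bounded/unbounded dichotomy does not close it.

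In Step~2, suppose $w$ is bounded and you translate so that $w_n(v)=w(v+v_n)-w(v_n)$ with $w(v_n)\to\sup w$; the limit $w_\infty$ satisfies $w_\infty\le 0=w_\infty(0)$ and the maximum principle gives $w_\infty\equiv 0$. But this does \emph{not} contradict $w$ being non-constant: it only says $w$ is asymptotically flat near the sequence $v_n$, and the non-constancy of $w$ may live far from every $v_n$. Your phrase ``unwinding the limit'' hides the problem rather than solving it. In Step~3, ``rescaling'' a discrete conformal factor has no obvious meaning that produces a linear limit; vertex scaling does not interact with metric rescaling the way circle radii do.

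The paper's fix is a single idea that replaces both of your Steps 2 and 3. Instead of applying the maximum principle to $w$, apply it to the \emph{difference function} $u_\delta(v):=w(v+\delta)-w(v)$ for a fixed lattice direction $\delta$. The point (Lemma~\ref{6.2}) is that the translate $(u_\delta+w)*l_0$ is again flat generalized Delaunay (it is just the shifted metric), so Theorem~\ref{max} applies with base metric $w*l_0$ and factor $u_\delta$. Crucially, $u_\delta$ is \emph{always} bounded by the ratio Lemma~\ref{ratio}, regardless of whether $w$ is. Now run your translation/compactness argument on $u_\delta$ rather than on $w$: choose $v_n$ so that $u_\delta(v_n)\to\sup u_\delta$, pass to a limit $w_\infty$ (Lemma~\ref{6.3}), and conclude that the limiting difference $w_\infty(\cdot+\delta)-w_\infty(\cdot)$ attains its supremum, hence is constant. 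This makes $w_\infty$ \emph{linear in the $\delta$-direction} (with nonzero slope if you chose $\delta$ so that $\sup u_\delta>0$, which is possible since $w$ is non-constant). Repeating with a second direction $\delta_2$ yields an affine $\hat w$, still embeddable by Lemma~\ref{embdev}(b), and Proposition~\ref{linear} finishes as you outlined.

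One further correction: in the degenerate case of Proposition~\ref{linear}(c), ``the image lies in a line'' is true but does not by itself yield two edges with overlapping interiors. The paper pins down the explicit values $(\gamma_1(V),\gamma_2(V))$, computes an edge-length inequality along a specific four-vertex path $P_1P_2P_3P_4$, and shows that any approximating embedded developing maps $\phi_n$ must have $\phi_n(P_1P_2)\cap\phi_n(P_3P_4)\neq\emptyset$. You will need a concrete argument of this kind, not a soft one.
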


We will deduce Theorem \ref{rigid} from Theorem \ref{rigidity} in
\S7.   Theorem \ref{rigidity} will be proved using several lemmas.

\subsection{Limits of discrete conformal factors}

The following lemma is a corollary of Theorem \ref{max}.

\begin{lemma} \label{6.2} Suppose $(S, \T_{st}, w*l_0)$ is
 a flat generalized Delaunay PL
metric surface.  Then for any $\delta \in V$ and $u: V \to \R$
defined by $u(v)=w(v+\delta) -w(v)$,  $u*(w*l_0)=(u+w)*l_0$ is  a flat
generalized Delaunay PL metric on $\T_{st}$. In particular, if
$u(v_0)=\max\{u(v) | v \in V\}$, then $u$ is constant.

\end{lemma}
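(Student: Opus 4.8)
The plan is to deduce everything from the Maximum Principle (Theorem \ref{max}) applied at a single vertex. First I would observe that $u*(w*l_0) = (u+w)*l_0$ is exactly the statement that applying the conformal factor $u$ to the metric $w*l_0$ shifts the log-lengths additively: if $v\sim v'$ then
\[
(u+w)*l_0(vv') = e^{u(v)+u(v')+w(v)+w(v')}l_0(vv') = u*(w*l_0)(vv'),
\]
so that is a definitional identity. The real content is that $(u+w)*l_0$ is again a \emph{flat, generalized, Delaunay} PL metric on $\T_{st}$. For this I would exhibit $u*(w*l_0)$ as a geometrically natural metric: since $u(v) = w(v+\delta) - w(v)$, one checks that for an edge $vv'$ with $v' = v+\sigma$ ($\sigma$ a basis difference vector),
\[
(u+w)*l_0(vv') = e^{w(v+\delta)+w(v'+\delta)}\,l_0(vv') = (w*l_0)\big((v+\delta)(v'+\delta)\big),
\]
using translation-invariance $l_0(vv') = l_0((v+\delta)(v'+\delta))$ of the standard lattice metric. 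In other words, $u*(w*l_0)$ is the pullback of the metric $w*l_0$ under the translation automorphism $v\mapsto v+\delta$ of $\T_{st}$. Translations are combinatorial automorphisms of $\T_{st}$ preserving all the combinatorics (each triangle goes to a triangle, each interior edge to an interior edge), so the pulled-back metric is isometric as a polyhedral surface to $(S,\T_{st},w*l_0)$; in particular it is flat (zero curvature at every vertex) and its angle at every edge matches the angle at the translated edge, hence the Delaunay inequalities are preserved. This gives conclusions (a) through the flatness/Delaunay part.

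For the "in particular" clause: suppose $u(v_0) = \max\{u(v): v\in V\}$ for some vertex $v_0$. I would apply Theorem \ref{max} to the star triangulation of the hexagon formed by $v_0$ and its six neighbors inside $\T_{st}$, taking the metric there to be $w*l_0$ restricted to that subcomplex (which is generalized Delaunay since the ambient metric is), taking the conformal factor on the seven vertices $\{v_0,\dots,v_6\}$ to be the restriction of $u$, and checking hypotheses (a), (b), (c) of Theorem \ref{max}: (a) $u*(w*l_0)$ is generalized Delaunay by the previous paragraph; (b) the curvature of $u*(w*l_0)$ at $v_0$ equals the curvature of $w*l_0$ at the translated vertex $v_0+\delta$, which is $0$ since $w*l_0$ is flat, and similarly the curvature of $w*l_0$ at $v_0$ is $0$, so $K_{v_0}(u*(w*l_0)) = 0 = K_{v_0}(w*l_0)$ and hypothesis (b) holds with equality; (c) $u(v_0)$ is the maximum among the seven values by the global maximality assumption. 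Theorem \ref{max} then forces $u$ to be constant on $\{v_0,\dots,v_6\}$, i.e. $u$ agrees with its maximum on every neighbor of $v_0$. Since $\T_{st}$ is connected and every vertex where $u$ attains its maximum forces the maximum at all neighbors, a standard propagation/connectedness argument shows $u$ is constant on all of $V$.

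The main obstacle I anticipate is the bookkeeping in the first paragraph: one must be careful that "generalized Delaunay" is preserved, because the subtlety in this paper is that degenerate triangles are allowed, and the Delaunay condition is an inequality on sums of opposite angles that could a priori degrade. The clean way around this is precisely to present $u*(w*l_0)$ as the honest pullback under the translation $v\mapsto v+\delta$ rather than manipulating the exponential identity blindly — once it is identified as an isometric copy of $w*l_0$ under a combinatorial automorphism, flatness, the generalized (triangle-inequality) condition, and Delaunay are all automatic, and no estimate is needed. A secondary point is ensuring the local application of Theorem \ref{max} is legitimate: the hexagon around $v_0$ together with the six triangles $\Delta v_0 v_i v_{i+1}$ is genuinely a star triangulation of a (possibly non-convex, possibly degenerate) polygon, and the metric on it inherited from $w*l_0$ is a generalized Delaunay PL metric on that star triangulation, which is exactly the hypothesis Theorem \ref{max} requires; this requires only that interior edges of the star remain interior, which holds since the six edges $v_0 v_i$ are interior to $\T_{st}$.
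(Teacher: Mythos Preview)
Your proposal is correct and follows the approach the paper intends: the paper gives no explicit proof, simply labeling the lemma ``a corollary of Theorem \ref{max}'' (the Maximum Principle), and your argument fills in precisely the two steps this requires --- first identifying $(u+w)*l_0$ as the pullback of $w*l_0$ under the translation automorphism $v\mapsto v+\delta$ of $\T_{st}$ (so flatness and the Delaunay condition are inherited automatically), and then applying Theorem \ref{max} at the hexagonal star of a maximizing vertex and propagating by connectedness.
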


The next lemma shows how to produce discrete conformal factors $w$
such that $w(v+\delta)-w(v)$ are constants.

\begin{lemma} \label{6.3} Suppose $w*l_0$ is a flat generalized Delaunay PL metric on
$\T_{st}$. Then for any $\delta \in \{\pm u_1, \pm u_2, \pm
(u_1-u_2)\}$, there exist $v_n \in V$ such that $w_n \in \R^V$ defined
by $w_n(v)=w(v+v_n)-w(v_n)$ satisfies

\noindent (a) for all $v \in V$, the following limit exists

$$ w_{\infty}(v) =\lim_{n \to \infty} w_n(v) \in \R,$$

\noindent (b) $w_n*l_0$ and
 $w_{\infty}*l_0$ are flat generalized Delaunay PL metric on $\T_{st}$,

\noindent (c) $w_{\infty}(v+\delta)-w_{\infty}(v)=a$  for all $v
\in V$ where $a =\sup\{ w(v+\delta)-w(v) | v\in V\}$,

\noindent (d)  the normalized developing maps $\phi_{w_{n}*l_0}$
of $w_{n}*l_0$ converges uniformly on compact sets in $S$ to the
normalized developing map $\phi_{\infty}$ of $w_{\infty}*l_0$. In
particular, if $(S, \T_{st}, w*l_0)$ is embeddable, then $(S,
\T_{st}, w_{\infty}*l_0)$ is embeddable.

\end{lemma}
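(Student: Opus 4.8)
The plan is to run a normal-families argument, using the ratio lemma (Lemma~\ref{ratio}) to get enough compactness and the maximum principle (Lemma~\ref{6.2}) to pin down the limit. The engine is the a priori estimate that there is a constant $C_1=C_1(l_0)$ with
\begin{equation*}
|w(v+\eta)-w(v)|\le C_1\qquad\text{for all }v\in V,\ \eta\in\{\pm u_1,\pm u_2,\pm(u_1-u_2)\}.
\end{equation*}
To see this, fix $v$ and $\eta$, let $p$ be a common neighbor of $v$ and $v+\eta$, and set $e=v(v+\eta)$, $e'=(v+\eta)p$. Applying Lemma~\ref{ratio} to the $6$-star of $p$ (which is flat, since $w*l_0$ is) bounds $w*l_0(e')$ by $\lambda(l_0)\,w*l_0(e)$, and applying it to the $6$-star of $v$ bounds $w*l_0(e)$ by $\lambda(l_0)\,w*l_0(e')$; here $\lambda(l_0)$ is a single constant because all $6$-stars of $\T_{st}$ are translates of one another. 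Thus $w*l_0(e)/w*l_0(e')\in[\lambda^{-1},\lambda]$, and since this ratio equals $e^{w(v)-w(p)}\,l_0(e)/l_0(e')$ with $l_0$ taking finitely many values on edges, $|w(v)-w(p)|$ is bounded in terms of $l_0$ alone. Letting $\eta$ and the choice of $p$ vary, $p-v$ runs over all six edge directions, which gives the estimate.

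Granting this, first $a:=\sup_v(w(v+\delta)-w(v))\le C_1<\infty$, so I can choose $v_n\in V$ with $w(v_n+\delta)-w(v_n)\to a$. For each fixed $v$, telescoping the estimate along an edge-path from $0$ to $v$ gives $|w_n(v)|\le d_c(0,v)\,C_1$ uniformly in $n$; since $V$ is countable, a diagonal argument yields a subsequence along which $w_\infty(v):=\lim_n w_n(v)$ exists in $\R$ for all $v$, which is (a) (and still $w_n(\delta)\to a$). For (b), since $l_0$ is lattice-translation invariant, $w_n*l_0=e^{-2w(v_n)}\,\tau_n^*(w*l_0)$, where $\tau_n$ is translation by $v_n$; rescaling by a positive constant and pulling back by a simplicial isometry preserve flatness, the Delaunay property, and embeddability, so each $w_n*l_0$ is flat generalized Delaunay (embeddable if $w*l_0$ is). The edge lengths of $w_n*l_0$ converge to those of $w_\infty*l_0$, and the triangle inequalities, the vanishing of curvature at every vertex, and the Delaunay inequalities are all closed conditions (angles and curvatures being continuous in edge lengths, \S2), so $w_\infty*l_0$ is again flat generalized Delaunay.

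For (c), put $u_\infty(v)=w_\infty(v+\delta)-w_\infty(v)=\lim_n(w(v+v_n+\delta)-w(v+v_n))$. Every term is $\le a$, so $u_\infty\le a$ everywhere, while $u_\infty(0)=\lim_n(w(v_n+\delta)-w(v_n))=a$; hence $u_\infty$ attains its maximum at $0$, and as $w_\infty*l_0$ is flat generalized Delaunay, Lemma~\ref{6.2} forces $u_\infty\equiv a$. For (d), the normalized developing map of a flat generalized PL metric on $\T_{st}$ is assembled triangle by triangle and depends continuously on the edge-length vector on each finite subcomplex, the normalization $\phi(0)=0,\ \phi(u_1)\in\R_{>0}$ removing the ambiguity and staying stable because $w_\infty(u_1)\in\R$ keeps the edge $0u_1$ nondegenerate in the limit; hence $\phi_{w_n*l_0}\to\phi_{w_\infty*l_0}$ uniformly on every compact subset of $S$ (which meets only finitely many triangles). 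In particular, if $w*l_0$ is embeddable then so is each $w_n*l_0$, and $\phi_{w_\infty*l_0}$ being their pointwise limit, $w_\infty*l_0$ is embeddable by Lemma~\ref{embdev}(b).

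The one delicate point is the a priori estimate in the first paragraph: one must invoke the ratio lemma at \emph{both} endpoints of the auxiliary edge $e$ to get the \emph{two-sided} comparison of $e$ with the neighboring edge $e'$. A one-sided bound would only control $w(v+\delta)-w(v)$ from above, whereas the matching lower bound is exactly what is needed to keep the $w_n(v)$ bounded at vertices away from $0$ in the second paragraph. Everything else is routine compactness and continuity.
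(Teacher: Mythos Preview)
Your proof is correct and follows essentially the same approach as the paper: use the ratio lemma to get a uniform bound on $|w(v')-w(v)|$ for adjacent $v,v'$, extract a subsequence by a diagonal argument, pass flatness/Delaunay to the limit by continuity, and apply the maximum principle (Lemma~\ref{6.2}) to force $w_\infty(v+\delta)-w_\infty(v)\equiv a$. The only presentational difference is that for (b) you argue directly that $w_n*l_0$ is a rescaled translation pullback of $w*l_0$, whereas the paper packages this via Lemma~\ref{6.2}; and you spell out the two-sided edge bound more carefully than the paper does (though note that a one-sided upper bound for all six edge directions already yields the two-sided bound, since $-\delta$ is again an edge direction).
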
 \begin{proof}  By Lemma \ref{ratio}, there is a constant $M=M(V)$ depending only on
the lattice $V=u_1 \Z + u_2\Z$ such that $a =\sup\{w(v+\delta)-w(v) | v\in V\} \leq M(V)$. Take $v_n \in V$ so that $$ w(v_n+\delta) -w(v_n) \geq a-\frac{1}{n}.$$
By definition, \begin{equation}\label{231} w_n(0)=0,  \quad  w_n(\delta) \geq a-\frac{1}{n},  \quad w_n(v+\delta)-w_n(v) \leq a,  \end{equation} and \begin{equation}   \sup\{|w_n(v) -w_n(v') | | v \sim v'\}  <  \infty. \end{equation}  By  Lemma \ref{ratio}, if $v \in V$ is of combinatorial distance $m$ to $0$, then, using $w_n(0)=0$, we have  \begin{equation}\label{cantor} |w_n(v)| \leq m M(V).\end{equation} By (\ref{cantor}) and the diagonal argument, we see that there exists a subsequence of $\{v_n\}$, still denoted by $\{v_n\}$ for simplicity, so that $w_n$ converges to
$w_{\infty} \in \R^V$ in the pointwise convergent topology. By
lemma \ref{6.2}, each $w_n*l_0$ is a flat generalized Delaunay PL
metric. By $\lim_{n \to \infty} w_n =w_{\infty}$ and continuity,
we conclude that $w_{\infty}*l_0$ is again a flat generalized
Delaunay PL metric on $\T_{st}$. By (\ref{231}),
$$ w_{\infty}(\delta)-w_{\infty}(0) =\max\{w_{\infty}(v+\delta)-w_{\infty}(v)| v \in V\}.$$
By Lemma \ref{6.2}, we see that conclusion (c) holds. Since the developing map $\phi_{w*l_0}$ depends continuously on $w
\in \R^V$,   $\lim_{n \to \infty} \phi_{w_n*l_0}(v)
=\phi_{\infty}(v)$  for each vertex $v \in V$.  On the other hand,
a developing map $\phi$ is determined by its restriction to $V$.
We see that $\phi_{w_n*l_0}$ converges to $\phi_{\infty}$
uniformly on compact subsets of the plane.  The last statement
follows from Lemma \ref{embdev}(b) since each $\phi_{w_n*l_0}$ is
embeddable by definition. \end{proof}

\subsection{Proof of Theorem \ref{rigidity}}
Suppose $w*l_0$ is a flat generalized Delaunay PL metric on
$\T_{st}$ with an embeddable developing map $\phi$.  Our goal is
to show that $w: V \to \R$ is a constant. Suppose otherwise, we
will derive a contradiction by showing that the developing map
$\phi$ is not embeddable.

Since $w$ is not a constant,  we can choose $\delta_1 \in \{ \pm
u_1, \pm u_2, \pm(u_1-u_2) \}$ such that $a_1=\sup\{
w(v+\delta_1)-w(v)|v \in V\}
> 0$. By Lemma \ref{6.3} applied to $w*l_0$ and $\delta=\delta_1$,
we produce a function $w_{\infty}: V \to \R$ so that
$w_{\infty}*l_0$ is a flat generalized Delaunay PL metric on
$\T_{st}$ and $w_{\infty}(v+\delta_1)=w_{\infty}(v)+a_1$ for all
$v\in V$. Now applying Lemma \ref{6.3} to $w_{\infty}*l_0$ with
$\delta_2 \in \{ \pm u_1, \pm u_2, \pm (u_1-u_2) \}-\{\pm
\delta_1\}$, we obtain a second function
$\hat{w}=(w_{\infty})_{\infty}: V \to \R$ and $b_1 \in \R$ such
that $\hat{w}*l_0$ is a flat generalized Delaunay PL metric on
$\T_{st}$ and
$$ \hat{w}(v+\delta_1)=\hat{w}(v)+a_1, \quad  \hat{w}(v+\delta_2)=\hat{w}(v)+b_1,$$
for all $v\in V$.  This shows that $\hat{w}: V \to \R$ is a
non-constant affine function, i.e., $\hat{w}(n+m e^{\pi i/3}) =
a_2n +b_2m +c_2$ for some $a_2, b_2, c_2 \in \R$.

Let $\hat{\phi}$, $\phi_{\infty}$ and $\phi$ be the normalized
developing maps for $\hat{w}*l_0$, $w_{\infty}*l_0$ and $w*l_0$
respectively.  Since $\phi$ is embeddable, by Lemmas \ref{6.3},
$\hat{\phi}$ and $\phi_{\infty}$ are embeddable.

If $\hat{w}*l_0$ contains a non-degenerate triangle, then by
Proposition   \ref{linear}, there exist two non-degenerate
triangles $t_1$ and $t_2$ in ($\T_{st}$, $\hat{w}*l_0)$ so that
$\hat{\phi}(int(t_1))\cap \hat{\phi}(int(t_2)) \neq \emptyset$. By
Lemma \ref{embdev}(a), this contradicts that $\hat{w}*l_0$ is
embeddable.



\begin{figure}[ht!] \begin{center}
\begin{tabular}{c}
\includegraphics[width=0.8\textwidth]{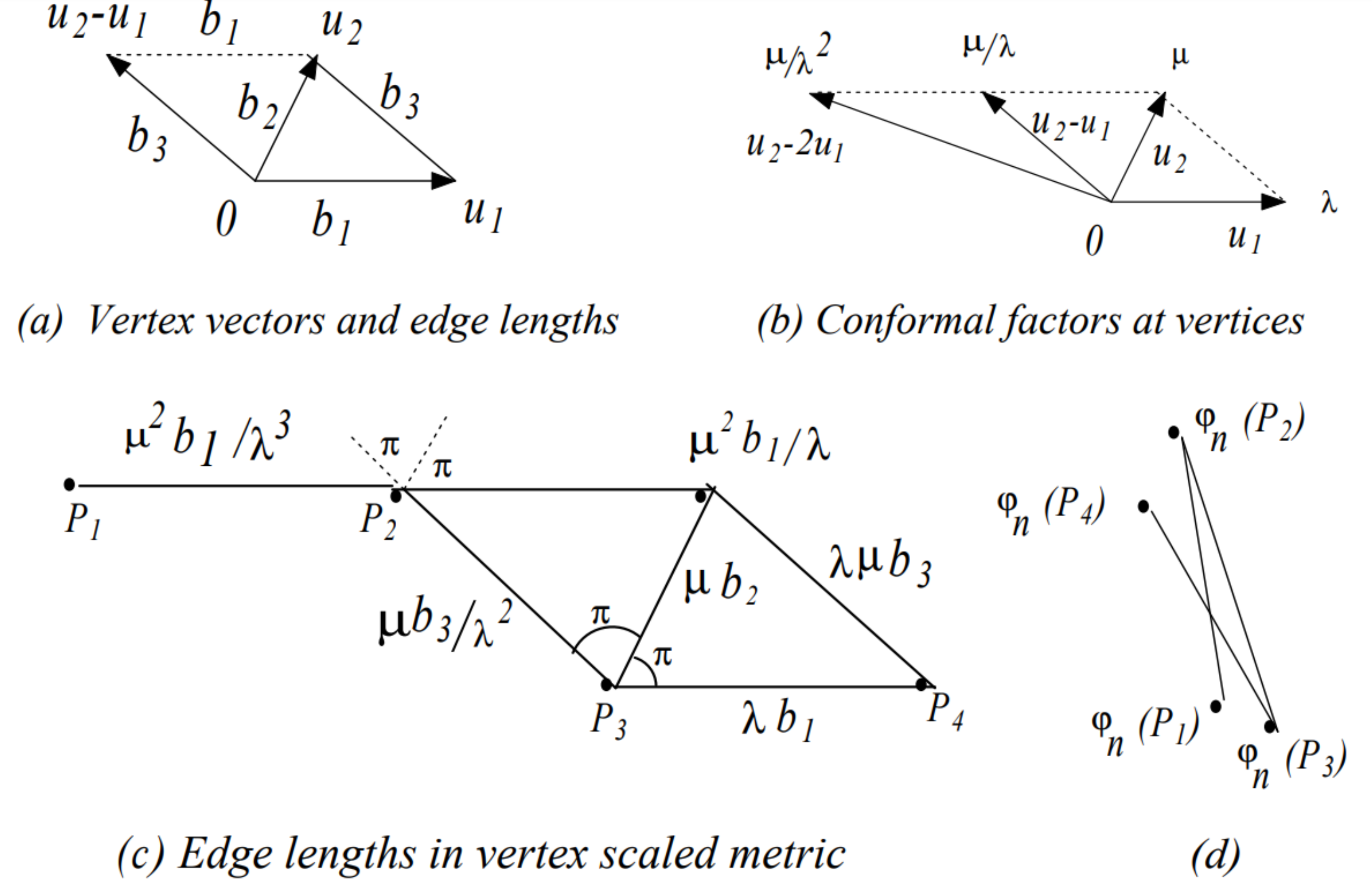}
\end{tabular} \end{center} \caption{Angles $a$ and $z$ are zero in $\hat{w}*l_0 $. Part (b)
is the developing image of corresponding set in $w*l_0$}
\label{f67}
\end{figure}
 Therefore all triangles in the generalized PL metric $\hat{w}*l_0$ are degenerate, i.e., all angles in triangles are either $0$ or
$\pi$. We will use the same notations used in the proof of Proposition \ref{linear}. By Proposition \ref{linear}(c) and Figure \ref{f67}, we may assume, after composing with an automorphism of $\T_{st}$ and subtracting by a constant, that $\hat{w}(  n u_1 +m u_2 )=n \ln(\gamma_1(V)) +m  \ln(\gamma_2(V))$ where $(\gamma_1(V), \gamma_2(V))$ are given by the
solutions  of (\ref{solu})  and the angles $a,b,c,x,y,z$ of $T_1$
and $T_2$ are $(a,b,c, x,y,z)=(\pi, 0, 0, 0, 0, \pi)$.

Let $P_1=u_2-2u_1$, $P_2=u_2-u_1$, $P_3=0$ and $P_4=u_1$ in $V$.
See Figure 6(c). In the case of $a=z=\pi$, we claim that the
length $\frac{\mu}{\lambda}b_3$ of the edge $P_2P_3$ is strictly
less than the sum of the lengths $\lambda b_1$ of the edge
$P_3P_4$ and $\frac{\mu^2}{\lambda^3}b_1$ of the edge $P_1P_2$,
i.e.,
\begin{equation} \label{912}\frac{\mu}{\lambda}b_3 <\lambda
b_1+\frac{\mu^2}{\lambda^3}b_1. \end{equation}

Indeed, by the equations $\lambda b_1+\mu b_2=\lambda\mu b_3$ and $\mu b_1=\lambda b_2+b_3$ derived in the proof of Proposition \ref{linear}, we obtain $$
\frac{b_3}{b_1}=\frac{ \lambda^2+\mu^2}{(1+\lambda^2)\mu}.$$
Equation (\ref{912}) says $$ \frac{b_3}{b_1} <
\frac{\lambda^4+\mu^2}{\lambda^2 \mu}.$$  Thus it suffices to show
that $\frac{ \lambda^2+\mu^2}{(1+\lambda^2)\mu}
<\frac{\lambda^4+\mu^2}{\lambda^2 \mu}$.  This is the same as,
$\lambda^2(\lambda^2+\mu^2)< (1+\lambda^2)(\lambda^4+\mu^4)$,
i.e., $\lambda^4+\lambda^2\mu^2 <
\lambda^4+\lambda^2\mu^2+\lambda^6+\mu^2$.  The last inequality
clearly holds since both $\lambda$ and $\mu$ are positive.

Now consider the oriented edge path $P_1P_2P_3P_4$ (oriented from
$P_2$ to $P_4$) in $\T_{st}$ and its image under the developing
map $\hat{\phi}$ of $\hat{w}*l_0$ in $\C$. By the assumption that
$a=z=\pi$, the angles of the polygonal path
$\hat{\phi}(P_1P_2P_3P_4)$ at $\hat{\phi}(P_2)$ and
$\hat{\phi}(P_3)$ are $2\pi$. See Figure 6(c). Also the sum of the lengths of
$\hat{\phi}(P_1P_2)$ and $\hat{\phi}(P_2P_4)$ is larger than the
length of $\hat{\phi}(P_2P_3)$ by the claim above.  On the other
hand, since $\hat{\phi}$ is embeddable, there exists a sequence of
flat PL metrics on $\T_{st}$ whose developing maps $\phi_n$ are
embedding and $\phi_n$ converges uniformly on compact sets to
$\hat{\phi}$. This implies, that for $n$ large the two line
segments $\phi_n(P_1P_2)$ and $\phi_n(P_3P_4)$ intersect in their
interiors. This contradicts the assumption that $\phi_n$ is an
embedding.

This ends the proof of Theorem \ref{rigidity}
\bigskip

\begin{remark} The above argument also gives a new proof of
Rodin-Sullivan's hexagonal circle packing theorem. \end{remark}

The following will be used to show that the limit of discrete
uniformization maps is conformal. Let $B_n(v)=\{ i \in V(\T_{st})
| d_c(i,v) \leq n\}$ and $\mathcal B_n(v)$ be the subcomplex of
$\T_{st}$ whose simplices have vertices in $B_n(v)$.

\begin{lemma}\label{hex} Take the standard hexagonal lattice $V=\Z +e^{2\pi/3}\Z$
and its associated standard hexagonal triangulation whose edge
length function is $l: V \to \{1\}$. There is a sequence $s_n$ of
positive numbers decreasing to zero with the following property.
For any integer $n$ and a vertex $v$, there exists $N=N(n, v)$ such that if $m \geq N$ and  $(\mathcal B_m(v), w*l_0)$ is a flat Delaunay triangulated PL
surface with embeddable developing map, then the ratio of the
lengths of any two edges sharing a vertex in $\mathcal B_m(v)$  is at most $1+s_n$.
\end{lemma}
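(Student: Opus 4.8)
The plan is to prove Lemma \ref{hex} by contradiction, extracting a limiting flat Delaunay metric on the whole plane that violates the rigidity Theorem \ref{rigidity}. Suppose the conclusion fails for some fixed $n$ and vertex $v$. Then for every $m$ there is a flat generalized Delaunay PL metric $w^{(m)}*l_0$ on $\mathcal B_m(v)$ with embeddable developing map, yet some pair of adjacent edges in $\mathcal B_m(v)$ has length ratio exceeding $1+s_n$. I would like to say this forces $w^{(m)}$ to be nearly constant on $\mathcal B_m(v)$ as $m\to\infty$ and hence rule out the bad ratio; so the whole argument hinges on a compactness/normalization step followed by an appeal to Theorem \ref{rigidity}.

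First I would normalize: replacing $w^{(m)}$ by $w^{(m)}-w^{(m)}(v_0)$ for a suitable base vertex $v_0$ (or by subtracting its value at a vertex realizing the max difference, as in Lemma \ref{6.3}), and using Lemma \ref{ratio} to bound $|w^{(m)}(v+\delta)-w^{(m)}(v)|$ uniformly by the lattice constant $M(V)$. This gives the key estimate $|w^{(m)}(u)| \le d_c(u,v_0)\,M(V)$ on $\mathcal B_m(v_0)$, exactly as in (\ref{cantor}). By the diagonal argument a subsequence of $w^{(m)}$ converges pointwise on all of $V$ to some $w_\infty\in\R^V$; by continuity of angles and lengths, $w_\infty*l_0$ is a flat generalized Delaunay PL metric on the entire plane $\T_{st}$. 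Moreover the normalized developing maps converge uniformly on compact sets, and since each $\phi_{w^{(m)}*l_0}$ is embeddable on the respective ball, Lemma \ref{embdev}(b) (applied to any fixed finite subcomplex, which eventually sits inside $\mathcal B_m(v)$) shows $w_\infty*l_0$ is embeddable. Theorem \ref{rigidity} then forces $w_\infty$ to be constant.

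The remaining step is to run the bad ratio to a contradiction. The trouble is that the "bad edge pair" in $\mathcal B_m(v)$ can drift off to infinity as $m$ grows, so pointwise convergence near $v$ alone does not immediately contradict anything. To handle this I would instead re-center: for each $m$ choose the base vertex $v_0=v_0^{(m)}$ so that the offending adjacent pair of edges lies within bounded combinatorial distance of $v_0^{(m)}$ (e.g. take $v_0^{(m)}$ to be an endpoint of a bad edge), and carry out the normalization and limit extraction with respect to these shifted base points — exactly the translation trick used in Lemma \ref{6.3}, where $w_n(u)=w(u+v_n)-w(v_n)$. The translates of $\mathcal B_m(v)$ exhaust the plane because $v_0^{(m)}$ stays within the balls, so the same compactness argument yields an embeddable flat Delaunay $w_\infty*l_0$ on all of $\T_{st}$, hence $w_\infty$ constant by Theorem \ref{rigidity}. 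But then near the re-centered base vertex the ratios $w^{(m)}*l_0(e)/w^{(m)}*l_0(e')$ for adjacent $e,e'$ converge to $l_0(e)/l_0(e')$, and the hexagonal lattice has only finitely many adjacent-edge ratios $r_1,\dots,r_k$; choosing $s_n$ to decrease to $0$ slower than $\max_j|r_j-1|$ is impossible since some $r_j\ne1$, so instead one defines $s_n$ via the modulus of continuity: $s_n$ is the supremum of $|w*l_0(e)/w*l_0(e')-1|$ over all embeddable flat Delaunay metrics with $|w(u)-w(u')|\le \epsilon_n$ for adjacent vertices in a ball of radius depending on $n$, with $\epsilon_n\downarrow0$ forced by the rigidity-driven convergence. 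Making this last definition of the universal sequence $\{s_n\}$ precise, independent of $v$ and of the metric, is the part I expect to require the most care; the cleanest route is to phrase the whole statement as a single compactness claim: if no such $s_n\to0$ existed, a diagonal subsequence would produce an embeddable non-constant flat Delaunay $w_\infty*l_0$ on the plane, contradicting Theorem \ref{rigidity}.
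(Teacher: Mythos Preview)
Your overall strategy---normalize using Lemma \ref{ratio}, extract a pointwise limit by the diagonal argument, and invoke Theorem \ref{rigidity} on the limiting metric---is exactly what the paper intends; it simply says the proof is ``exactly the same as that of Rodin--Sullivan'' with Lemma \ref{ratio} playing the role of the Ring Lemma and Theorem \ref{rigidity} that of hexagonal rigidity.

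There is, however, a genuine gap in your re-centering step. You assert that ``the translates of $\mathcal B_m(v)$ exhaust the plane because $v_0^{(m)}$ stays within the balls.'' This is false: if the bad edge pair sits near $\partial\mathcal B_m(v)$, then after translating $v_0^{(m)}$ to the origin the distance from the origin to one side of the boundary stays bounded, and the translated balls only fill (at best) a half-plane. Theorem \ref{rigidity} is stated for $(\C,\T_{st})$, and you have no rigidity statement for a half-plane, so the contradiction does not close. A second, smaller slip: in the standard hexagonal lattice every edge has length $1$, so all adjacent-edge ratios $r_j$ equal $1$; the digression about ``some $r_j\neq 1$'' is moot.

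Both problems evaporate once you read the lemma the way the Rodin--Sullivan hexagonal packing lemma is meant (and the way it is used in the proof of Theorem \ref{conv1}): the conclusion controls only edges at a vertex that is surrounded by many generations---e.g.\ edges incident to $v$ itself, or in $\mathcal B_n(v)$---not arbitrary edges of $\mathcal B_m(v)$. With that reading the offending edge pair is pinned near the center, no re-centering is needed, and your compactness argument (normalize so $w^{(m)}(v)=0$, use $|w^{(m)}(u)|\le d_c(u,v)M(V)$, pass to a diagonal limit, check embeddability via Lemma \ref{embdev}(b)) produces an embeddable flat Delaunay $w_\infty*l_0$ on all of $\T_{st}$; Theorem \ref{rigidity} forces $w_\infty\equiv 0$, whence the edge ratios at $v$ converge to $1$, contradicting the assumed lower bound. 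Defining $s_n$ is then straightforward: take $s_n$ to be the supremum, over all flat Delaunay embeddable $w*l_0$ on $\mathcal B_n(0)$, of the maximum adjacent-edge ratio at $0$ minus $1$; this depends only on $n$ by translation invariance, is finite by Lemma \ref{ratio}, and the argument above shows $s_n\to 0$.
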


The proof of the lemma is exactly the same as that of
Rodin-Sullivan \cite[pages 353-354]{RS}  since we have Lemma
\ref{ratio} and Theorem \ref{rigidity} which play the roles of
Rodin-Sullivan's Ring Lemma and rigidity of hexagonal circle
packing  in \cite[pages 352-353]{RS}).



\section{Existence of discrete uniformization metrics on polyhedral disks with special equilateral triangulations}

By a \it polygonal disk \rm  we mean a flat PL surface $(\mathcal
P, V,d)$ which is isometrically embedded in the complex plane $\C$
and $\mathcal P$ is homeomorphic to the closed disk.  The goal of this section is to prove the existence of a discrete conformal metric by regular subdividing the given triangulations.



An \it equilateral triangulation \rm $\T$ of a polyhedral surface
is a geometric triangulation whose triangles are equilateral. The
edge length function of an equilaterally triangulated connected polyhedral surface will be denoted by the constant function
 $l_{st}: E(\T) \to \R$. Given an equilateral Euclidean triangle $\Delta \subset \C$ and
$n \in \Z_{\geq 1}$,
the \it $n$-th standard subdivision \rm of $\Delta$ is the equilateral triangulation of $\Delta$ by $n^2$ equilateral triangles.
See Figure \ref{7}. If $\T$ is an equilateral triangulation of a
polyhedral surface, its \it $n$-th standard subdivision\rm, denoted
by $\T_{(n)}$, is the equilateral triangulation obtained by
replacing each triangle in $\T$ by its $n$-th standard subdivision.
We use $V_{(n)}$ to denote
$V(\T_{(n)})$. 

\begin{figure}[ht!]
\begin{center}
\begin{tabular}{c}
\includegraphics[width=0.8\textwidth]{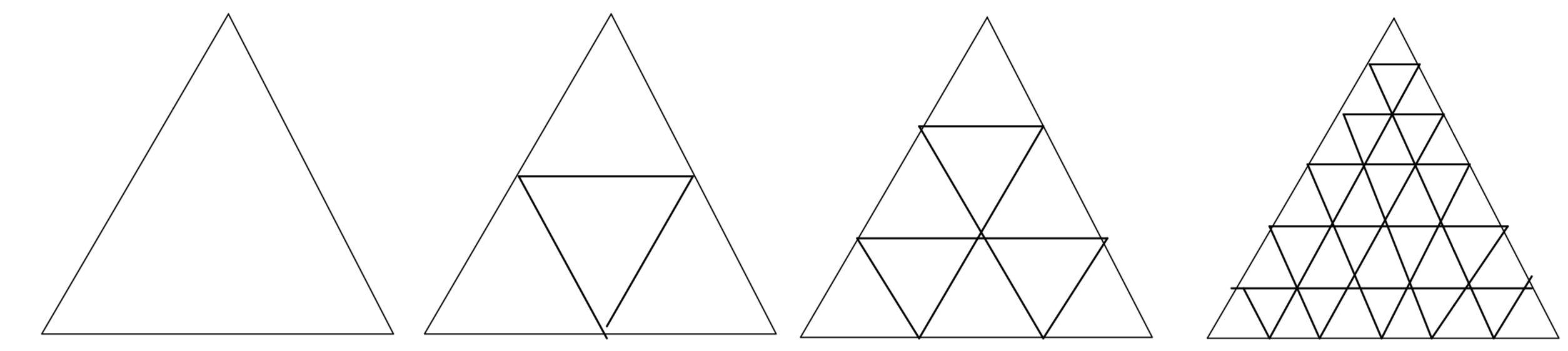}\\
\quad
1st\quad \quad\quad\quad\quad\quad
2nd\quad \quad\quad\quad\quad\quad
3rd\quad \quad\quad\quad\quad\quad$~~~~$
6th
\end{tabular}
\end{center}
\caption{The standard subdivisions} \label{7}
\end{figure}

The main result of this section is  the following theorem.

\begin{theorem}\label{1.1} Suppose $(\mathcal P, \T, l_{st})$ is a
flat polygonal disk with an equilateral triangulation $\T$
 such that exactly three boundary vertices $p,q,r$ have
curvature $\frac{2\pi}{3}$. 
Then for sufficiently large $n$, there is discrete conformal
factor $w_n: V_{(n)}\to \R$ for the $n$-th standard subdivision
$(\mathcal P, \T_{(n)}, l_{st})$  such that the discrete
 curvature $K$ of $w_n*l_{st}$ satisfies

 (a) $K_i=0$ for all $i \in V_{(n)}-\{p,q,r\}$,

 (b) $K_i=\frac{2\pi}{3}$ for all $i \in \{p,q,r\}$, and

 (c) there is a constant $\epsilon_0>$ independent of $n$ such that all inner angles of triangles in $(\T_{(n)}, w_n*l_{st})$ are in
 the interval $[\epsilon_0, \pi/2+\epsilon_0]$,  the sum of two angles
 facing each interior edge is at
 most $\pi-\epsilon_0$ and each angle facing a boundary edge is at most
 $\pi/2-\epsilon_0$.
\end{theorem}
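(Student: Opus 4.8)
The plan is to realize the discrete conformal factor as the endpoint of a flow, or equivalently as the solution of a variational problem, and then to extract the uniform angle bounds (c) by a compactness argument that exploits the fact that the standard subdivisions $\T_{(n)}$ become "locally almost hexagonal" and the target metric is the equilateral-triangle metric, which is smooth away from the three cone points. First I would set up the target curvature $\bar K$ on $V_{(n)}$ with $\bar K_i = 0$ off $\{p,q,r\}$ and $\bar K_i = 2\pi/3$ at $p,q,r$, noting that by Gauss-Bonnet this is admissible for a disk. By the Bobenko-Pinkall-Springborn theory (the variational principle whose derivative is the curvature evolution formula (\ref{curvatureevol})), existence of $w_n$ realizing $\bar K$ is equivalent to the convex function $\mathcal E_n(w) = \sum_i (\bar K_i - K_i^{\infty})w_i + \ldots$ attaining an interior minimum; since the equilateral triangulation of $\Delta ABC$ by its $n^2$ standard sub-triangles \emph{is itself} a flat Delaunay PL metric discretely conformal to $(\mathcal P,\T_{(n)},l_{st})$ for $n$ a multiple of a suitable integer (both are made of the same equilateral pieces glued with the correct cone angles), one already has a candidate $w_n^{\mathrm{model}}$; the real content is that the \emph{true} minimizer stays uniformly nondegenerate. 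So the strategy is: (1) produce \emph{some} discrete conformal $w_n$ with curvature $\bar K$ — here I would either invoke the BPS existence/uniqueness directly using the fact that a discretely conformal flat Delaunay realization exists, or run the curvature flow $\dot w = \bar K - K(w)$, which by (\ref{curvatureevol}) is a negative-gradient flow for a convex energy and converges whenever the Delaunay cell structure is preserved; (2) prove the uniform estimate (c).

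For step (1), the cleanest route is to observe that $(\mathcal P, \T, l_{st})$ with the prescribed cone angles $2\pi/3$ at $p,q,r$ and $0$ elsewhere \emph{is} isometric to $\Delta ABC$ (this is forced: a flat disk with exactly three boundary cone points of interior angle $\pi/3$ and all other boundary angles $\pi$ and zero interior curvature is an equilateral triangle), hence for $n$ divisible by the relevant integer the subdivision $\T_{(n)}$ maps to a geometric triangulation $\T'_{(n)}$ of $\Delta ABC$, giving an explicit $w_n$. For general large $n$ one then perturbs, or one simply applies BPS uniqueness to the target metric. For step (2), the uniform angle control, I would argue by contradiction and rescaling: if (c) failed, there would be a sequence of vertices $v_n$ at which some angle degenerates. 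Rescale the metric $w_n * l_{st}$ near $v_n$ so that a neighbouring edge has unit length; passing to a limit (using Lemma \ref{ratio} for compactness of the conformal factors and the diagonal argument as in Lemma \ref{6.3}) one obtains a flat generalized Delaunay PL metric on the hexagonal triangulation $\T_{st}$ of the whole plane, embeddable into $\C$ because each $\phi_n$ is an embedding (the metric $w_n*l_{st}$ comes from the actual triangulation $\T'_{(n)}$ of $\Delta ABC$). By the rigidity Theorem \ref{rigidity}, the limiting conformal factor is constant, so the limiting metric is the standard equilateral hexagonal metric, whose angles are all $\pi/3$ — contradicting degeneracy. The same limiting argument, applied near a boundary vertex and using the metric double along the boundary (whose Delaunay/angle hypotheses are guaranteed once interior Delaunay holds with boundary angles $\le \pi/2$), handles the boundary edge estimate.

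The main obstacle I expect is step (2), and within it two technical points. First, the rescaling/compactness must produce a limit on the \emph{full} plane tiled by $\T_{st}$, which requires knowing that the conformal factors $w_n$, after recentering at $v_n$ and renormalizing, stay bounded on every combinatorial ball of fixed radius \emph{uniformly in $n$} — this is exactly what Lemma \ref{ratio} (the ratio lemma, playing the role of Rodin-Sullivan's Ring Lemma) delivers, but one must also rule out that $v_n$ is too close to one of the three cone points $p,q,r$, i.e. show that the "bad" vertex escapes the fixed cone points; near a cone point the local model is a $2\pi/3$-cone, not the plane, so a separate (easier) direct estimate on the finitely many sub-triangles adjacent to $p,q,r$ in the explicit model metric is needed, and one notes these angles are independent of $n$ for the model and vary continuously. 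Second, one must be careful that the limiting object is genuinely \emph{embeddable} in the sense of the definition preceding Lemma \ref{embdev}, so that Theorem \ref{rigidity} applies: this follows from Lemma \ref{embdev}(b) since each $w_n*l_{st}$ is the metric of an honest geometric triangulation of $\Delta ABC$, hence embeddable, and embeddability passes to pointwise-convergent limits. Once the contradiction is in place, $\epsilon_0$ is obtained as, say, half the infimum of the finitely many angle defects, and conclusions (a), (b) are immediate from the construction of $w_n$.
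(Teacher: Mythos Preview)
Your proposal has a genuine gap in step (1), which is in fact the main content of the theorem.

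You write that ``$(\mathcal P, \T, l_{st})$ with the prescribed cone angles $2\pi/3$ at $p,q,r$ and $0$ elsewhere is isometric to $\Delta ABC$'', but this misreads the hypothesis. The statement says exactly three boundary vertices have curvature $2\pi/3$; it does \emph{not} say the remaining boundary vertices are flat. The paper's setup explicitly allows other boundary vertices $v\notin\{p,q,r\}$ with nonzero curvature (these are called \emph{corners}, forming the set $V_c$; since $\mathcal P$ is embedded in $\C$ with an equilateral triangulation, their curvatures are $\pm\pi/3$ or $-2\pi/3$). So $\mathcal P$ is in general a complicated polygon, not an equilateral triangle, and there is no ``explicit model'' $w_n^{\mathrm{model}}$: two equilaterally triangulated disks built from the same number of triangles are almost never related by a vertex scaling. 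Your fallback suggestions do not help either. The BPS/GLSW existence theory produces a discretely conformal metric only after allowing diagonal flips of the triangulation; on the \emph{fixed} triangulation $\T_{(n)}$ existence of a vertex-scaled metric with prescribed curvature can genuinely fail, as the paper emphasizes in the introduction. And ``run the curvature flow'' begs the question: without a priori angle control the flow may hit the boundary of the PL-metric cone in finite time, which is exactly what must be ruled out.

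The paper's argument is constructive and does not use the rigidity theorem at all. It has two stages. First, around each corner $v\in V_c$ of degree $m$ one applies Theorem~\ref{1.2} (an explicit ODE construction on a single $[n/3]$-subdivided equilateral triangle, with hard angle bounds in $[\pi/6,\pi/2]$ obtained from a current/gradient estimate that exploits the reflection symmetry, Lemma~\ref{1.3}) to each of the $m-1$ triangles in $B_{[n/3]}(v)$. Gluing gives a factor $w^{(1)}$ supported near the corners which flattens every corner and pushes the curvature defect out to the circles $\{d_c(\cdot,v)=[n/3]\}$, where it has size $O(1/\sqrt{\ln n})$ and total mass $\le \pi N/3$. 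Second, one runs the ODE (\ref{ode4}) from $w^{(1)}*l_{st}$ toward the target curvature $K^*$; because the remaining defect is now small and supported on a thin set, the discrete-harmonic gradient estimate of Proposition~\ref{311} (proved by a Dirichlet-energy annulus argument, Lemma~\ref{318}) gives $|w_i'-w_j'|=O(1/\sqrt{\ln\ln n})$ along the flow, so angles move by less than $\pi/2000$ and the flow reaches $t=1$ without degeneration. The constant $\epsilon_0$ in (c) falls out of these explicit bounds.

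Your step (2) compactness/rescaling idea is not unreasonable in spirit---it is essentially how Lemma~\ref{hex} is proved and used later in \S6 for the \emph{convergence} theorem---but it cannot replace the work above. Even granting existence of $w_n$, to feed the rescaled limit into Theorem~\ref{rigidity} you would need each $(\T_{(n)},w_n*l_{st})$ to be Delaunay, and that is part of conclusion (c), not a hypothesis; a geometric triangulation of $\Delta ABC$ produced by an arbitrary vertex scaling need not be Delaunay.
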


Conditions (a) and (b) imply that the underlying metric space of
$(\mathcal P, \T_{(n)}, w_n*l_{st})$ is an equilateral triangle.
Condition (c) says that the metric doubles of $(\mathcal P,
\T_{(n)}, l_{st})$ and $(\mathcal P, \T_{(n)}, w_n*l_{st})$ are
two Delaunay triangulated polyhedral 2-spheres differing by a vertex scaling.

There are two steps involved in the construction of the discrete
conformal factor $w_n$ in Theorem \ref{1.1}.  In the first step,
we produce a discrete conformal factor $w^{(1)}: V_{(n)} \to \R$
such that $w^{(1)}$ vanishes outside the union of combinatorial
balls of radius $[n/3]$ (the integral part of $n/3$)  centered at non-flat vertices $v \neq
p,q,r$ and the discrete curvature $K_i(w^{(1)}*l_{st}) =0$ if
$d_c(i,v) < [n/3]$ and $K_i(w^{(1)}*l_{st}) = O(1/\sqrt{\ln(n)})$
if $d_c(i, v)=[n/3]$. This step diffuses the non-zero discrete
curvatures $\pi/3, -\pi/3$ and $-2\pi/3$ (at non-flat vertices
$v$) to small curvatures  at vertices defined by $d_c(i,v)=[n/3]$.
In the second step, by choosing $n$ large such that all curvatures
are very small, we use a perturbation argument to show that there
is $w^{(2)}: V_{(n)} \to \R$ such that $w^{(2)}*(w^{(1)}*l_{st})$
satisfies the conditions in Theorem \ref{1.1}.  The required
discrete conformal factor $w_n$ is $w^{(1)}+w^{(2)}$ since
$(w^{(2)} +w^{(1)})*l_{st}= w^{(2)}*(w^{(1)}*l_{st}).$

The basic tools to be used for proving Theorem \ref{1.1} are
discrete harmonic functions, their gradient estimates and ordinary
differential equations (ODE). We begin by recalling the related
material.

\subsection{Laplace operator on a finite graph}

Given a graph $(V, E)$,  the set of all oriented edges in $(V,E)$
is denoted  by $\bar{E}$.   If $i \sim
 j$ in $V$, we use $[ij] \in \bar{E}$ to denote the oriented edge from $i$ to $j$.
If $x \in \R^V$ and $y \in \R^{\bar{E}}$, we use $x_i$
 and $y_{ij}$ to denote $x(i)$ and $y([ij])$ respectively.
A \it conductance \rm on $G$ is a function $\eta: \bar{E} \to
\R_{\geq 0}$ so that $\eta_{ij}=\eta_{ji}$.

\begin{definition} Given a finite graph $(V,E)$ with a conductance $\eta$,
the   gradient  $\triangledown: \R^V \to \R^{\bar{E}}$ is the
linear map $$ (\triangledown f)_{ij} =\eta_{ij}(f_i-f_j),$$
 the  Laplace operator  associated to $\eta$ is the linear map
$\triangle: \R^V \to \R^V$ defined by
$$ (\triangle f)_i =\sum_{j \sim i}
\eta_{ij}(f_i-f_j),$$ and the  Dirichlet energy  of  $f \in \R^V$
on $(V,E,\eta)$ is $$ \mathcal E(f) =\frac{1}{2} \sum_{ i \sim j}
\eta_{ij}(f_i -f_j)^2.$$


\end{definition}

The following is well known (see \cite{chung}).

\begin{proposition}[Green's identity]\label{green} Given a finite
graph $(V,E)$ with a conductance $\eta$,

(a)  for any subset $V_0 \subset V$,
$$ \sum_{ i \in V_0}f_i (\triangle g)_i -g_i ( \triangle f)_i =\sum_{ i
\in V_0, j \sim i, j \notin V_0} \eta_{ij}(g_if_j-f_ig_j).$$

(b)  $\sum_{i \in V} (\triangle f)_i=0.$

\end{proposition}

Given a set $V_0 \subset V$ and $g: V_0 \to \R$, the \it Dirichlet
problem \rm asks for a function $f: V \to \R$ so that

\begin{equation}\label{dp} (\triangle f)_i=0, \text{$\forall i \in
V-V_0$, and }  f|_{V_0}=g. \end{equation}
 The \it Dirichlet principle \rm
states that solutions $f$ to the Dirichlet problem (\ref{dp}) are
the same as minimum points of the  Dirichlet energy function restricted to
the affine subspace $\{ h \in \R^V | h|_{V_0}=g\}$, i.e.,
\begin{equation}\label{dprinciple}
\mathcal E(f) = \min\{ \mathcal E(h)| h \in \R^V \quad \text{and}
\quad h|_{V_0}=g \}.
\end{equation}
In particular, the Dirichlet problem (\ref{dp}) is always
solvable.

 A subset $U \subset V$ in a graph $(V,E)$
is called \it connected \rm if any two vertices $i, j \in U$ can
be joint by an edge path whose vertices are in $U$. For instance,
a connected  graph $(V,E)$ means $V$ is a connected.
The following is well known (see \cite{chung}).

\begin{proposition}\label{mphar} Suppose $(V,E)$ is a finite
connected graph with a conductance $\eta_{ij}>0$ for all edges
$[ij]$ and $V_0 \subset V$. Let $f$ be a solution to  the
Dirichlet problem (\ref{dp}). Then,

(a) (Maximum principle) for $V_0\neq \emptyset$,
$$ \max_{i \in V}
f_i =\max_{ i \in V_0} f_i.$$

(b) (Strong maximum principle) If $V-V_0$ is connected and
$\max_{i\in V-V_0} f_i =\max_{i \in V_0} f_i$,
 then $f|_{V-V_0}$ is a constant function.
\end{proposition}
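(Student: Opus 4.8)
The plan is to prove both parts by the standard ``propagation of maxima'' argument, using only that every conductance is strictly positive and that $f$ is discrete harmonic on $V-V_0$; Green's identity is not needed. The key observation, used repeatedly, is the following: if $i \in V-V_0$ and $f_i = \max_{k \in V} f_k =: M$, then from $(\triangle f)_i = \sum_{j \sim i} \eta_{ij}(f_i - f_j) = 0$ together with $\eta_{ij}(M - f_j) \ge 0$ for every neighbour $j$, each summand must vanish, and since $\eta_{ij} > 0$ we conclude $f_j = M$ for all $j \sim i$. Thus, within $V-V_0$, the set of vertices where $f$ attains the value $M$ is closed under passing to adjacent vertices.

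For part (a), note that $\max_{V_0} f \le \max_V f$ is automatic, so it suffices to exhibit a vertex of $V_0$ at which $f$ equals its global maximum $M$. Choose any $v$ with $f_v = M$. If $v \in V_0$ we are done; if $v \in V-V_0$, the key observation gives $f \equiv M$ on all neighbours of $v$, and iterating along edge paths (here using connectedness of the ambient graph $(V,E)$) yields $f \equiv M$ on all of $V$, in particular on the nonempty set $V_0$. Either way $\max_{V_0} f = M$.

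For part (b), set $M = \max_{V-V_0} f$, which by hypothesis equals $\max_{V_0} f$ and hence equals $\max_V f$ by part (a). Let $U = \{\, i \in V-V_0 : f_i = M \,\}$, which is nonempty. By the key observation, if $i \in U$ then every neighbour of $i$ that lies in $V-V_0$ also lies in $U$. Given any $w \in V-V_0$, the hypothesis that $V-V_0$ is a connected subset of $(V,E)$ provides an edge path from a fixed vertex of $U$ to $w$ all of whose vertices lie in $V-V_0$; walking along it and applying the previous sentence at each step shows $w \in U$. Hence $U = V-V_0$, i.e. $f|_{V-V_0}$ is the constant $M$. The argument is essentially routine; the only point deserving attention is that in part (b) one must stay inside $V-V_0$ when joining vertices, so the hypothesis that $V-V_0$ (and not merely the ambient graph) is connected is genuinely used.
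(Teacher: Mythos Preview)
The paper does not supply its own proof of this proposition; it merely records it as well known with a reference to Chung. Your argument is the standard propagation-of-maxima proof and part (b) is carried out carefully and correctly.

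There is, however, a small overstatement in your proof of part (a). You assert that if the global maximum $M$ is attained at some $v\in V-V_0$, then ``iterating along edge paths \ldots\ yields $f\equiv M$ on all of $V$.'' This need not be true: the key observation applies only at vertices of $V-V_0$, so once the propagation reaches a vertex of $V_0$ you cannot push through it. For a concrete counterexample, take the path graph on $\{1,2,3,4,5\}$ with unit conductances, $V_0=\{1,3,5\}$, and boundary values $g(1)=g(3)=1$, $g(5)=0$; then $f=(1,1,1,\tfrac12,0)$, the maximum $M=1$ is attained at the interior vertex $2$, yet $f_4\neq M$. The fix is immediate and in the same spirit: argue by contradiction that if $M$ were \emph{not} attained on $V_0$, then the set $\{i\in V:f_i=M\}$ would lie entirely in $V-V_0$, would be nonempty, and (by the key observation) would have no edges leaving it; connectedness of $(V,E)$ then forces this set to be all of $V\supset V_0$, a contradiction. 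With this adjustment your proof of (a) is complete.
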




\subsection{A system of ODE associated
to discrete conformal change} Let $(S, \T, l)$ be a compact
connected polyhedral surface with discrete curvature $K^0$. Given
a subset $V_0 \subset V$ and a function $K^*: V-V_0 \to (-\infty,
2\pi)$, we try to find a function $w: V \to \R$ such that $w*l$ is
a PL metric whose curvature $K(w)$ is equal to $K^*$ on $V-V_0$
and $w|_{V_0}=0$. In the PL metric $w*l$, let
$\theta^i_{jk}=\theta^i_{jk}(w)$ be the angle at vertex $i$ in the
triangle $\Delta ijk$ and $\eta_{ij}=\eta_{ij}(w)$ be
$\cot(\theta^k_{ij})+\cot(\theta^l_{ij})$ if $[ij]$ is an interior
edge and $\eta_{ij}=\cot(\theta^k_{ij})$ if $[ij]$ is a boundary
edge. The associated Laplacian  $\Delta: \R^V \to \R^V$ is
$(\Delta f)_i =\sum_{j \sim i} \eta_{ij}(f_i-f_j)$.   We will
construct $w$ by choosing a smooth 1-parameter family $w(t)\in
\R^V$ such that $w(0)=0$ and $w(t)*l$ is a PL metric whose
curvature $K_i(t)=K_i(w(t)*l_{st})$ satisfies \begin{equation}
\label{ode1} \forall i\in V-V_0, \quad K_i(t)=(1-t)K_i^0+tK_i^*;
\quad \text{and} \quad \forall i \in V_0, \quad
 w_i(t)=0.
\end{equation}  The required vector  $w$ is defined to be $w(1)$.
 Note that by definition $K(0)=K^0$.
Due to the curvature evolution equation (\ref{curvatureevol}) that
$\frac{dK_i(t)}{dt}=\sum_{j \sim i} \eta_{ij}(w(t)) (w_i'-w_j')$
where $w'_i(t)=\frac{dw_i(t)}{dt}$, we obtain the following system
of ODE in $w(t)$ equivalent to (\ref{ode1}):
\begin{equation}\label{ode3}
\forall i \in V-V_0, \quad \sum_{j \sim i}
\eta_{ij}(w_i'-w_j')=K^*_i-K_i^0;   \quad \forall i\in V_0, \quad
w'_i(t)=0; \quad \text{and  $w(0)=0$}.
\end{equation}
Using  $\triangle f$, we can write Equation (\ref{ode3}) as
\begin{equation}\label{ode4}
\forall i \in V-V_0, \quad (\Delta w')_i=K^*_i-K_i^0;  \quad
\forall i\in V_0, \quad w'_i(t)=0; \quad \text{and  $w(0)=0$.}
\end{equation}

We will show, under some assumptions on $(\T, l)$, that the
solution to (\ref{ode3}) exists for all $t \in [0,1]$.

Let $W \subset \R^V$ be the open set
\begin{equation}\label{wdomain} W=\{ w \in \R^V | w*l  \text{ is a
PL metric on $\T$ and $\eta_{ij}(w)>0$ for all edges $[ij]$} \}.
\end{equation}

\begin{lemma} \label{odeexist} Suppose  $V_0 \neq
\emptyset$ and $0\in W$.  The initial valued problem (\ref{ode3})
defined on $W$ has a unique solution in a maximum interval $[0,
t_0)$ with $t_0>0$ such that if $t_0<\infty$, then either
$\liminf_{t \to t_0^-} \theta^i_{jk}(w(t)) = 0$ for some angle
$\theta^i_{jk}$ or $\liminf_{t \to t_0^-} \eta_{ij}(w(t))=0$ for
some edge $[ij]$.
\end{lemma}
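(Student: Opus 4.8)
The plan is to treat this as a standard short-time existence and maximality result for an ODE on an open subset of a finite-dimensional vector space, obtained from the Picard--Lindel\"of theorem together with a continuation (escape) argument.

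First I would set up the ODE as a genuine first-order system $w'(t) = F(w(t))$ on the open set $W \subset \R^V$. Fix $t \in [0,1]$; for a given $w \in W$ I need to solve the linear system (\ref{ode4}) for the unknown vector $v = w' \in \R^V$, namely $(\Delta_w v)_i = K_i^* - K_i^0$ for $i \in V - V_0$ and $v_i = 0$ for $i \in V_0$, where $\Delta_w$ is the Laplacian with conductance $\eta_{ij}(w)$. The key point is that for $w \in W$ all conductances $\eta_{ij}(w)$ are strictly positive, so by the Dirichlet principle (Proposition \ref{green}, Proposition \ref{mphar} and the surrounding discussion) this Dirichlet problem has a unique solution: existence and uniqueness of the solution to (\ref{dp}) with $V_0 \neq \emptyset$. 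Thus $v = F(w)$ is well defined on $W$. Moreover, the angles $\theta^i_{jk}$ depend smoothly (real-analytically) on the edge lengths via the cosine law (\ref{cosine}), the edge lengths $e^{w_i + w_j} l(ij)$ depend smoothly on $w$, hence $\eta_{ij}(w)$ is smooth on $W$, and since $\Delta_w$ restricted to the subspace $\{v : v|_{V_0} = 0\}$ is invertible with inverse depending smoothly on the entries of $\Delta_w$ (Cramer's rule), the map $F: W \to \R^V$ is smooth, in particular locally Lipschitz.

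Next I would invoke the Picard--Lindel\"of theorem for the initial value problem $w'(t) = F(w(t))$, $w(0) = 0 \in W$: there is a unique maximal solution defined on a half-open interval $[0, t_0)$ with $t_0 > 0$. (Strictly, $F$ also depends on $t$ through the right-hand side $K_i^* - K_i^0$ in (\ref{ode4}) — but that dependence is affine and only through the constant vector, so it changes nothing; if one prefers, note the right side of (\ref{ode4}) is $t$-independent, so $F$ is autonomous.) The continuation theorem for ODEs says that if $t_0 < \infty$ then the solution curve $w(t)$ must eventually leave every compact subset of $W$ as $t \to t_0^-$. So I need to characterize how a curve can escape $W$.

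The main step, and the one requiring the most care, is translating ``leaves every compact subset of $W$'' into the stated alternative. Here I would argue as follows. Suppose $t_0 < \infty$ and suppose, for contradiction, that $\liminf_{t \to t_0^-} \theta^i_{jk}(w(t)) > 0$ for every angle and $\liminf_{t \to t_0^-} \eta_{ij}(w(t)) > 0$ for every edge. I first observe that the solution $w(t)$ stays bounded as $t \to t_0^-$: integrating $w'(t) = F(w(t))$, it suffices to bound $|F(w(t))|$, and the Dirichlet solution $v$ has $|v|$ controlled by the $\ell^\infty$ norm of the right-hand side $K^* - K^0$ times the operator norm of $\Delta_w^{-1}$ on the relevant subspace, which in turn is controlled by a lower bound on the smallest positive eigenvalue of $\Delta_w$ — and that eigenvalue is bounded below in terms of a lower bound on the conductances $\eta_{ij}$ (a standard Poincar\'e-type / Rayleigh-quotient estimate on the connected graph, using $V_0 \neq \emptyset$) together with an upper bound on the $\eta_{ij}$, the latter following since all angles stay bounded away from $0$, hence all $\cot\theta$ stay bounded. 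So $w(t)$ remains in a bounded region; together with all angles bounded below (hence, since angles in a triangle sum to $\pi$, all angles also bounded above away from $\pi$, so the triangle inequalities (\ref{striangle}) hold with a uniform gap) and all $\eta_{ij}$ bounded below, the curve $w(t)$ then lies in a compact subset $C \subset W$ for $t$ near $t_0$. By the escape theorem the solution extends past $t_0$, contradicting maximality. Hence one of the two liminf conditions fails, which is exactly the claimed dichotomy. I expect the bookkeeping in this last paragraph — making precise that the complement of $W$ in $\R^V$ is the set where some triangle inequality degenerates or some $\eta_{ij}$ vanishes, and that boundedness of $w(t)$ plus the two liminf hypotheses force the curve into a compact subset of $W$ — to be the only real obstacle; everything else is a direct application of the Dirichlet principle and the standard ODE existence/continuation theorems.
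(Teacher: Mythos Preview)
Your proposal is correct and follows the same overall architecture as the paper: rewrite (\ref{ode3}) as $w'=F(w)$ by inverting the Dirichlet Laplacian on $W$ (the paper phrases this as invertibility of a matrix $Y(w)$, proved via the maximum principle), apply Picard--Lindel\"of, then use the continuation/escape criterion to conclude.

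The one place your argument genuinely differs from the paper is in showing $w(t)$ stays bounded under the contradiction hypothesis. You bound $|F(w(t))|$ directly via a spectral/Poincar\'e estimate (lower bound on $\eta_{ij}$ gives a lower bound on the Dirichlet eigenvalue, hence an upper bound on $\|\Delta_w^{-1}\|$) and then integrate over the finite interval. The paper instead argues geometrically: if all angles stay $\geq\delta$, then in each triangle the edge-length ratios are bounded by $1/\sin\delta$, so $e^{w_i}\leq e^{w_j}\cdot C$ for adjacent $i,j$; since $w|_{V_0}=0$ and the surface is connected, this propagates to bound every $w_k$. Both arguments use connectedness and $V_0\neq\emptyset$ at the crucial moment (you for the Poincar\'e inequality, the paper for the propagation step). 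Your route is a bit more analytic and uses both hypotheses ($\theta$ and $\eta$ bounded below) to control $F$; the paper's route is more elementary and uses only the angle lower bound to control $w$, treating the third escape mode $|w_i|\to\infty$ as a separate case that reduces to the first. Either is fine here. Your remark that the compactness bookkeeping (closedness of the set $\{\theta\geq\delta,\ \eta\geq\delta,\ |w|\leq M\}$ inside $W$) is the only delicate point is accurate; the paper handles it by noting the limit is a generalized PL metric with all angles $\geq\delta$, hence non-degenerate.
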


\begin{proof}
Indeed Equation (\ref{ode3}) can be written as $Y(w)\cdot
w'(t)=\beta$ and $w(0)=0$  where $Y(w)$ is a square matrix valued
smooth function of $w \in W$ and $w'(t)$ is considered as a column
vector. We claim that $Y(w)$ is an invertible matrix for $w \in
W$. If  $Y(w)$ is invertible, then (\ref{ode3}) can be written as
$w'(t)=Y(w)^{-1} \beta$ and  by the Picard's
existence theorem, there exists an interval on which the ODE (\ref{ode3}) has a solution. Now $Y(w)$ is invertible if and only if  the
following system of linear equations has only trivial solution
$x=0$,
\begin{equation}\label{ode5} Y(w) \cdot x=0.
\end{equation}
By (\ref{ode3}), Equation (\ref{ode5}) is the same as $(\Delta
x)_i=0$ for  $i \in V-V_0$ and $x_i=0$ for  $i\in V_0$.
Furthermore $w \in W$ implies $\eta_{ij}(w)>0$ for all edges
$[ij]$. By the maximum principle (Proposition \ref{mphar}), we see
that $x=0$.

If $t_0<\infty$ and $t \uparrow t_0$, then $w(t)$ leaves every
compact set in $W$.    For each $\delta>0$, we claim that
$W_{\delta}=\{ w \in W|
 \theta^{i}_{jk} \geq \delta,  |w_{i}| \leq \frac{1}{\delta},\eta_{ij} \geq \delta
 \}$ is compact. Clearly $W_{\delta}$ is bounded by definition.
To see that $W_{\delta}$ is closed in $\R^V$, take a sequence $x_n
\in W_{\delta}$ such that $\lim_{n \to \infty} x_n =y \in \R^V$.
Then $y*l$ is a generalized PL metric with all angles
$\theta^i_{jk} \geq \delta$. Since each degenerate  triangle has
an angle which is zero, therefore $y*l$ is a PL metric. Also by
continuity, we have $\theta^i_{jk}(y)\geq \delta$,
$\eta_{ij}(y)\geq \delta$ and $|y_{i}|\leq \frac{1}{\delta}$,
i.e., $y \in W_{\delta}$. Since $w(t)$ leaves every $W_{\delta}$
for each $\delta >0$, one of the following three occurs:
$\liminf_{t \to t_0^-} \theta^i_{jk}(w(t))=0$ for some
$\theta^i_{jk}$, or $\liminf_{t \to t_0^-} \eta_{ij}(w(t))=0$ for
some edge $[ij]$, or $\limsup_{t \to t_0^-} |w_i(t)|=\infty$ for
some $i_0 \in V$. However $\limsup_{t \to t_0^-} |w_{i_0}(t)|=\infty$
for one vertex $i_0$ implies that $\liminf_{t \to t_0^-}
\theta^l_{jk}(w(t))=0$ for some $\theta^l_{jk}$. Indeed, if
otherwise, $\liminf_{t \to t_0^-} \theta^l_{jk}(w(t))\geq
\delta>0$ for all $\theta^l_{jk}$ for some $\delta$. It is well
known  that in a Euclidean triangle whose angles are at least $\delta$,
the ratio of two edge lengths is at most $\frac{1}{\sin(
\delta)}$. Therefore, in each triangle $\Delta v_iv_jv_k$  in
$\T$, we have $e^{w_i(t)} \leq
e^{w_j(t)}\frac{l(v_jv_k)}{l(v_iv_k)\sin(\delta)}$. Since
$w_j(t)=0$ for $j \in V_0$ and the surface $S$ is connected, we
conclude that all $w_k(t)$, $k\in V$, are bounded for all $t$.
This contradicts $\limsup_{t \to t_0^-} |w_{i_0}(t)|=\infty$.
 \end{proof}

\subsection{Standard subdivision of an equilateral triangle}


%

\begin{theorem}\label{1.2}  Let $S=\Delta ABC$ be an
equilateral triangle,  $\T$ be the $n$-th standard subdivision of
$S$ with the associated PL metric $l_{st}:V= V(\T) \to \{
\frac{1}{n}\}$ and $V_0=\{ v \in V| v$ is in the edge $BC$ of the
triangle $\Delta ABC$\}. Given any $\alpha \in [\frac{\pi}{6},
\frac{\pi}{2}]$, there exists a smooth family of vectors $w(t) \in
\R^V$ for $t\in [0,1]$ such that $w(0)=0$ and  $w(t)*l_{st}$ is a
PL metric on $\T$ with curvature $K(t)=K(w(t)*l_{st})$ satisfying,

(a) $K_A(t)= -t\alpha+(2+t)\frac{\pi}{3}$ (angle at $A$ is
$t\alpha+(1-t)\frac{\pi}{3}$),

(b)  $K_i(t)$=0  for all $i \in V-\{A\}\cup V_0$,

(c) $w_i(t)=0$ for all $i \in V_0$,

(d) all inner angles $ \theta^i_{jk}(t)$ in metric $w(t)*l_{st}$
are in the interval  $ [\frac{\pi}{3}-|\alpha-\frac{\pi}{3}|,
\frac{\pi}{3}+|\alpha-\frac{\pi}{3}|] \subset [\frac{\pi}{6},
\frac{\pi}{2}]$,

(e)  $\theta^i_{jk}(t) \leq \frac{59 \pi}{120}$ for $i \neq A$,

(f)  $|K_i(t) -K_i(0)| \leq \frac{ 2000}{\sqrt{ \ln(n)}}$ for $i
\neq A$ and
\begin{equation}\label{totalcur} \sum_{i \in V_0}
|K_i(t)-K_i(0)|\leq \frac{\pi}{6}.  \end{equation}
\end{theorem}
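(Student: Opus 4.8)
The plan is to construct $w(t)$ as the solution of the ODE system \eqref{ode3} on $(S,\T,l_{st})$ with $V_0=\{v\in V: v\in BC\}$ and target curvature deformation that, except at $A$, is the constant $0$ while at $A$ it interpolates linearly from $K_A(0)=\frac{\pi}{3}$ to $K_A(1)=-\alpha+\frac{2\pi}{3}$. Concretely $K_i(t)=(1-t)K_i^0+tK_i^*$ with $K^*_i=0$ for $i\notin V_0\cup\{A\}$ and $K^*_A=-\alpha+\frac{2\pi}{3}$; by Lemma \ref{odeexist} this has a unique solution on a maximal interval $[0,t_0)$, and the whole game is to show $t_0>1$ together with the quantitative bounds (d)--(f). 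The symmetry of the equilateral triangle and of its $n$-th standard subdivision under the reflection fixing $A$ and swapping $B,C$ forces $w(t)$ to be invariant under this reflection, which will be used repeatedly; in particular $w'(t)$ is symmetric and its values on $V_0$ vanish by construction.

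First I would control $w'(t)$ by the discrete Dirichlet problem it solves: for $i\in V\setminus V_0$, $(\Delta_{w(t)}w')_i=K^*_i-K^0_i$, which is $0$ except at $i=A$ where it equals $-\alpha+\frac{\pi}{3}$, a bounded quantity of absolute value $\le \frac{\pi}{6}$. Thus $w'(t)$ is the discrete potential (with respect to the conductances $\eta_{ij}(w(t))$) of a single point charge at $A$ grounded on the segment $BC$. As long as all angles of $w(t)*l_{st}$ stay bounded away from $0$ and $\pi/2$ — which I will maintain as a bootstrap hypothesis, say all angles in $[\pi/6-\epsilon,\pi/2+\epsilon]$ for a fixed small $\epsilon$ — the conductances $\eta_{ij}$ are comparable to $1$ (bounded above and below), so the corresponding random walk is comparable to the simple random walk on the triangular grid. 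The key estimates are then: (i) the \emph{energy} $\mathcal E(w'(t))=\sum_i w'_i(t)((\Delta w')_i)=w'_A(t)(-\alpha+\tfrac{\pi}{3})$ is $O(1)$ times $|w'_A(t)|$, and by the Dirichlet principle comparison with the $O(\log n)$ effective resistance from $A$ to $BC$ on the grid of mesh $1/n$, one gets $|w'_A(t)|=O(\log n)$ and $\mathcal E(w'(t))=O(\log n)$; (ii) gradient/oscillation estimates for discrete harmonic functions away from the charge: on any combinatorial ball of radius $d_c(i,A)=d$ not meeting $BC$, the function $w'(t)$ is discrete harmonic, so its increment across an edge at distance $d$ from $A$ is $O(1/d)$ by a standard discrete-harmonic gradient estimate (Cauchy-type, or a dyadic energy argument using $\mathcal E=O(\log n)$ distributed over $\sim\log n$ dyadic annuli, giving $O(1/\sqrt{\log n})$ per annulus and hence curvature increments of that size). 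This is exactly the mechanism behind (f): since $K_i(t)=c\pi-\sum\theta^i_{jk}$ and $\frac{dK_i}{dt}=\sum_{j\sim i}\eta_{ij}(w_i'-w_j')$, at a vertex $i\ne A$ not on $BC$ the right side is a Laplacian of a harmonic function, which telescopes against the boundary of a ball, yielding $|K_i(t)-K_i(0)|=|\int_0^t\frac{dK_i}{ds}ds|\le \frac{2000}{\sqrt{\log n}}$; for $i$ at distance exactly $[n/3]$-type scales the constant can be pinned down explicitly. The total bound $\sum_{i\in V_0}|K_i(t)-K_i(0)|\le \pi/6$ follows from Gauss--Bonnet: $\sum_{\text{all }i}K_i(t)=2\pi\chi=2\pi$ is constant in $t$, so $\sum_{i\in V_0}(K_i(t)-K_i(0))=-(K_A(t)-K_A(0))=\alpha-\frac{\pi}{3}$, whose absolute value is $\le\frac{\pi}{6}$; combined with a sign/monotonicity observation (each boundary curvature on $BC$ moves in one direction, which one checks from the maximum principle applied to $w'$) this upgrades to the $\ell^1$ bound.

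Next I would close the bootstrap for the angle bounds (d),(e). At $t=0$ all angles are $\pi/3$. By Corollary \ref{trig}, $\frac{d\theta^i_{jk}}{dt}=\cot(\theta^i_{lm'})(w'_j-w'_i)+\cot(\cdots)(w'_k-w'_i)$ where the cotangents are of the other angles of that triangle, and by the gradient estimate above the edge-differences $w'_j-w'_i$ are small — of order $1/\sqrt{\log n}$ away from $A$, and of order $1$ (but with a definite sign, pushing toward the desired $[\frac\pi3-|\alpha-\frac\pi3|,\frac\pi3+|\alpha-\frac\pi3|]$ window) in the finitely many triangles touching $A$. Near $A$: the subcomplex of triangles incident to $A$ is a fixed finite "fan", the boundary data of $w'$ on the outer ring of that fan is nearly constant (by the harmonic gradient estimate in the annulus between that ring and $BC$), so $w(t)$ restricted to the fan is, up to $o(1)$, the exact model where only $w_A$ varies; Proposition \ref{variation}(b) then shows the angle at $A$ decreases at the controlled rate while the angles of the fan triangles at the other two vertices stay within $[\frac\pi3-|\alpha-\frac\pi3|-o(1),\frac\pi3+o(1)]$, and crucially $\le \frac{59\pi}{120}$ — this is where I would do the only genuine numerical check, verifying that in a triangle with one angle ranging over $[\pi/6,\pi/3]$ the other two stay below $\frac{59\pi}{120}$ with room to spare for the $o(1)$ error. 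Away from $A$, since each angle moves by $O(1/\sqrt{\log n})$ in total over $t\in[0,1]$, for $n$ large all angles remain in $[\pi/3-\pi/12,\,\pi/3+\pi/12]\subset[\pi/6,\pi/2]$, re-establishing the bootstrap hypothesis strictly, hence $t_0>1$ by Lemma \ref{odeexist} (neither an angle nor an $\eta_{ij}$ can degenerate). Finally (a)--(c) are built into the ODE: (c) because $w_i'\equiv 0$ on $V_0$, (b) because $K^*_i=K^0_i=0$ there so $K_i(t)$ is constant $=0$, and (a) because $K_A(t)=(1-t)\frac\pi3+t(-\alpha+\frac{2\pi}{3})=-t\alpha+(2+t)\frac\pi3$, equivalently the angle at $A$ is $t\alpha+(1-t)\frac\pi3$.

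The main obstacle, and the step I would spend the most care on, is the quantitative discrete-harmonic estimate underlying (e) and (f): turning "energy $O(\log n)$" plus "harmonic away from $A$ and $BC$" into the explicit $\frac{2000}{\sqrt{\log n}}$ curvature bound while simultaneously keeping the angles uniformly off $0$ and $\pi/2$ so that the conductances — and hence the comparison with the grid random walk — remain valid. This is a chicken-and-egg coupling between the geometry (angle bounds) and the PDE estimate (gradient decay), resolved by a continuity/bootstrap argument in $t$: one assumes the angle bounds on $[0,T)$, derives the gradient decay, derives that angles only moved by $o(1)$, concludes the bounds were not saturated, and lets $T\uparrow 1$. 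The dyadic-annulus energy decomposition (logarithmically many annuli between the $O(1)$-scale near $A$ and the $O(1)$-scale near $BC$ — both at combinatorial scale comparable to $n$ — sharing total energy $O(\log n)$, so a typical annulus carries $O(1)$ energy, giving oscillation $O(1/\sqrt{\log n})$ across it by Cauchy--Schwarz) is the cleanest route to the $1/\sqrt{\log n}$ rate; pinning the constant to $2000$ is then just bookkeeping of the comparison constants, which I would not belabor.
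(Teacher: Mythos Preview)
Your overall strategy---construct $w(t)$ via the ODE \eqref{ode3} and run a continuity argument in $t$---is the paper's, and your treatment of (a)--(c) and of the $\ell^1$ bound in (f) (Gauss--Bonnet plus monotonicity of the boundary curvatures from the maximum principle) is correct and matches the paper. Your dyadic-energy route to the pointwise bound in (f) is also close in spirit to the paper's Lemma \ref{891}, which argues by duality: set $g_u=1$, $g|_{V_0\setminus\{u\}}=0$, harmonic elsewhere; Green's identity gives $(\Delta w')_u=-g_A(\Delta w')_A$, and $g_A\le 20M/\sqrt{\ln n}$ follows by comparing $\mathcal E(g)$ against a layer decomposition $U_k=\{d_c(\cdot,u)=k\}$.

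The genuine gap is in (d) and (e). Your mechanism---$O(1/\sqrt{\log n})$ or $O(1/d)$ gradient decay away from $A$, plus a direct model at the single triangle incident to $A$---leaves the triangles at combinatorial distance $1,2,\dots,O(1)$ from $A$ (outside the apex triangle) uncontrolled: neither estimate gives anything small there, and the fan model does not extend past the first triangle. Consequently your bootstrap does not close: the angle derivative at those vertices is only $O(1)$, which could drive angles out of $[\pi/6,\pi/2]$ and kill the conductance bounds you need.

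What the paper uses instead is a uniform current bound you are missing (Lemma \ref{1.3}): by the reflection $\tau$, the conductance network folds to a quotient in which $A$ has a \emph{single} outgoing edge, so no edge carries more current than that one; i.e.\ $|\eta_{ij}(w'_i-w'_j)|\le\tfrac12|(\Delta w')_A|=\tfrac12|\alpha-\tfrac\pi3|$ for \emph{every} edge, uniformly in $n$ and in the distance to $A$. Under the bootstrap $\theta\in[\tfrac\pi6,\tfrac\pi2]$ one has $0\le\cot\theta^k_{ij}\le\eta_{ij}$, so Corollary \ref{trig} gives $|d\theta^k_{ij}/dt|\le|(\nabla w')_{ij}|+|(\nabla w')_{ik}|\le|\alpha-\tfrac\pi3|$ everywhere; integrating recovers (d) exactly and the bootstrap closes with no slack. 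Part (e) is then a refinement of the same bound: on $t\in[0,\tfrac12]$ all angles lie in $[\tfrac\pi4,\tfrac{5\pi}{12}]$, so for an interior edge $\eta_{ij}\ge(1+\cot\tfrac{5\pi}{12})\cot\theta^k_{ij}\ge\tfrac54\cot\theta^k_{ij}$, yielding $|d\theta^i_{jk}/dt|\le\tfrac{9}{10}|\alpha-\tfrac\pi3|$ for $i\ne A$; integrating over $[0,\tfrac12]$ and $[\tfrac12,1]$ separately gives the $\tfrac{59\pi}{120}$ bound. Once Lemma \ref{1.3} is in hand, the energy machinery is needed only for (f), not for (d)--(e).
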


\begin{remark} The discrete conformal map from $(\Delta ABC, \T,
l_{st})$ to $(\Delta ABC, \T, w(1)*l_{st})$ is a discrete
counterpart of the analytic function $f(z)=z^{3\alpha/\pi}$.

\end{remark}

\begin{figure}[ht!]
\begin{center}
\begin{tabular}{c}
\includegraphics[width=1\textwidth]{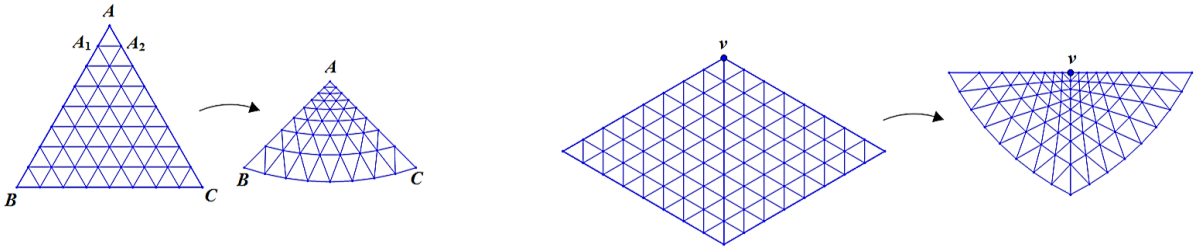}
\end{tabular}
\end{center}
\caption{Discrete conformal maps of equilateral triangles and
their unions} \label{p4}
\end{figure}


Our proof of Theorem \ref{1.2} relies on the following two lemmas about estimates on discrete harmonic functions on $\mathcal T$.

\begin{lemma} \label{1.3}
Assume $\Delta ABC,n,\mathcal T,V_0$ are as given in Theorem \ref{1.2}.
Let $\tau: \T \to \T$ be the involution
induced by the reflection of $\Delta ABC$ about the angle bisector
of $\angle BAC$ and $\eta: E \to \R_{\geq 0}$ be a conductance so
that $\eta \tau =\eta$ and $\eta_{ij}=\eta_{ji}$. Let $\Delta:
\R^V \to \R^V$ be the Laplace operator defined by $(\Delta f)_i
=\sum_{ j \sim i} \eta_{ij}(f_i -f_j)$. If $f \in \R^V$ satisfies
$(\Delta f)_i=0$ for $i \in V-\{A\}\cup V_0$ and $f|_{V_0}=0$,
then for all edges $[ij]$, the gradient $(\triangledown
f)_{ij}=\eta_{ij}(f_i-f_j)$ satisfies
\begin{equation}\label{90e} |\eta_{ij}(f_i -f_j)| \leq
\frac{1}{2}|\Delta(f)_A|.
\end{equation}
\end{lemma}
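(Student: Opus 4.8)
The plan is to read the discrete gradient $(\triangledown f)_{ij}=\eta_{ij}(f_i-f_j)$ as a current: by hypothesis it satisfies Kirchhoff's law at every vertex outside $\{A\}\cup V_0$, it has a single source at $A$ of total strength $c:=(\Delta f)_A$, and it drains entirely into the boundary edge $V_0$. The estimate $|(\triangledown f)_{ij}|\le\tfrac12|c|$ will follow by cutting the triangulated triangle along a level set of $f$ that separates $A$ from $V_0$ and tallying the flux through the cut; the factor $\tfrac12$ is produced by the reflection symmetry $\tau$ together with the elementary fact that no edge of $\mathcal T$ lies on the bisector of $\angle BAC$. Throughout I may assume $\eta_{ij}>0$ for every edge, since edges of zero conductance affect neither $\Delta$ nor $\triangledown$ (this positivity holds in all applications of the lemma).

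First I would symmetrize. Because $\eta\circ\tau=\eta$, $\tau(A)=A$ and $\tau(V_0)=V_0$, the function $f-f\circ\tau$ is $\Delta$-harmonic on $V\setminus(\{A\}\cup V_0)$ and vanishes on $\{A\}\cup V_0$, hence is identically zero by the maximum principle (Proposition \ref{mphar}); thus $f=f\circ\tau$. Consequently every level set $S_\theta:=\{v\in V:f_v>\theta\}$ is $\tau$-invariant and $(\triangledown f)_{\tau(i)\tau(j)}=(\triangledown f)_{ij}$ for all oriented edges.

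Next I would normalize and record a priori bounds. Replacing $f$ by $-f$ if necessary, assume $c=(\Delta f)_A\ge0$; if $c=0$ then $f$ is harmonic on all of $V\setminus V_0$ with zero boundary values, so $f\equiv0$ and the inequality is trivial, and I may assume $c>0$. The maximum principle applied to $f$ (harmonic off $\{A\}\cup V_0$, with $f|_{V_0}=0$) then gives $0\le f\le f_A$ with $f_A=\max_Vf>0$: if $f_A<0$ then $f_A=\min_Vf$ forces $(\Delta f)_A\le0$, and if $f_A=0$ then $(\Delta f)_A=-\sum_{j\sim A}\eta_{Aj}f_j\le0$, both contradicting $c>0$. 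Now fix an edge $e=[ij]$ with $f_i>f_j$ (the remaining cases being trivial) and choose a threshold $\theta$ with $f_j<\theta<f_i$; then $\theta>0$ (as $f_j\ge0$) and $\theta<f_A$, so $S_\theta$ contains $A$ and $i$ but contains neither $j$ nor any vertex of $V_0$. Summing $\Delta f$ over $S_\theta$ and using Green's identity (Proposition \ref{green}) and harmonicity off $\{A\}\cup V_0$ gives
\[
c=(\Delta f)_A=\sum_{v\in S_\theta}(\Delta f)_v=\sum_{\substack{v\in S_\theta,\ w\notin S_\theta\\ w\sim v}}\eta_{vw}(f_v-f_w),
\]
a sum of nonnegative terms (each satisfies $f_v>\theta\ge f_w$). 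The edge $e$ is one of these terms, contributing $\eta_{ij}(f_i-f_j)$; its mirror $\tau(e)$ is another, contributing the same amount. Moreover $\tau(e)\ne e$: otherwise $\tau$ either interchanges $i$ and $j$, forcing $f_i=f_{\tau(i)}=f_j$, or fixes both $i$ and $j$, which is impossible because the vertices of $\mathcal T$ on the bisector of $\angle BAC$ are exactly the barycentric points $(s,s,n-2s)$, no two of which are adjacent, so $\mathcal T$ carries no edge on that bisector. Hence $c\ge 2\,\eta_{ij}(f_i-f_j)$, which is (\ref{90e}).

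The only ingredient beyond routine discrete potential theory is the constant $\tfrac12$, and I expect its justification — the symmetry $f=f\circ\tau$ assigning every current-carrying edge a distinct mirror twin of equal current, combined with the lattice observation that no edge of $\mathcal T$ lies on the bisector — to be the point requiring the most care, along with the bookkeeping that the chosen level set genuinely separates $A$ from $V_0$ with all boundary flux positive. Everything else is Green's identity and the maximum principle already set up in \S5.
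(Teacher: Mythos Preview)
Your proof is correct and follows essentially the same route as the paper: exploit the $\tau$-symmetry to get $f=f\circ\tau$, then apply Green's identity over a super-level set of $f$ so that the total outgoing current $c=(\Delta f)_A$ dominates the contributions of the edge $e$ and its distinct mirror $\tau(e)$. Your version is slightly streamlined---by including $A$ in $S_\theta$ you avoid the paper's separate treatment of the two edges at $A$, and your explicit check that no edge of $\mathcal T$ lies on the bisector fills in a step the paper leaves implicit---whereas the paper handles the case of vanishing conductances more carefully by passing to connected components of the positive-conductance subgraph rather than assuming $\eta>0$.
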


\begin{lemma}\label{891}
Assume $\Delta ABC,n,\mathcal T,V_0$ are as given in Theorem \ref{1.2}.
Let $\eta: E(\T) \to [\frac{1}{M}, M]$ be a conductance
function  for some $M>0$ and  $\Delta$ be the Laplace operator on
$\R^V$ associated to $\eta$. If $f:V \to \R$ solves the Dirichlet
problem $(\Delta f)_i=0, \forall i \in V-\{A\}\cup V_0$,
$f|_{V_0}=0$ and $(\Delta f )_A=1$, then for all $u \in V_0$,
$|(\Delta f)_u|\leq \frac{20M}{\sqrt{\ln n}}.$
\end{lemma}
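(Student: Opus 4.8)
The plan is to reduce the estimate to a bound on a discrete harmonic measure and then exploit the one feature of the $n$‑th subdivided equilateral triangle that produces the $\sqrt{\ln n}$: the combinatorial distance from the apex $A$ to the opposite side $V_0$ is exactly $n$, so any "test profile'' around $A$ passes through $\Theta(\log n)$ dyadic scales.

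\emph{Step 1 (reduction by Green's identity).} Introduce the discrete harmonic measure $\omega^u\in\R^V$ of $u$: the solution of $(\Delta\omega^u)_i=0$ for $i\in V-V_0$ and $\omega^u|_{V_0}=\mathbf 1_{\{u\}}$ (it exists by the Dirichlet principle and satisfies $0\le\omega^u\le1$ by the maximum principle, Proposition \ref{mphar}). Apply Green's identity (Proposition \ref{green}(a) with index set all of $V$, so the right side is empty) to the pair $f,\omega^u$ and use $f|_{V_0}=0$, $(\Delta\omega^u)|_{V-V_0}=0$, $(\Delta f)|_{V-(\{A\}\cup V_0)}=0$, $(\Delta f)_A=1$:
\[
0=\sum_{i\in V}\bigl(f_i(\Delta\omega^u)_i-\omega^u_i(\Delta f)_i\bigr)=-\bigl(\omega^u_A+(\Delta f)_u\bigr),
\]
so $(\Delta f)_u=-\omega^u_A$ and it suffices to prove $\omega^u_A\le 20M/\sqrt{\ln n}$. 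Testing the equation for $\omega^u$ against $\omega^u$ gives $\sum_i\omega^u_i(\Delta\omega^u)_i=\omega^u_u(\Delta\omega^u)_u=\sum_{j\sim u}\eta_{uj}(1-\omega^u_j)\le4M$, since $u\in V_0\subset BC$ has degree at most $4$ in $\mathcal T$ and $\eta\le M$; thus the Dirichlet energy satisfies $\mathcal E(\omega^u)\le 4M$ (up to the factor in the definition, which I absorb into the final constant).

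\emph{Step 2 (dyadic annuli about $u$).} Index $V(\mathcal T)$ by $\{(i,j):i,j\ge0,\ i+j\le n\}$ with $A=(0,0)$ and $BC=\{i+j=n\}$; along the three edge directions the quantity $i+j$ changes by $0$ or $\pm1$ per step, so $d_c(A,v)=i+j$, whence $V_0=\{d_c(\cdot,A)=n\}$ and $d_c(A,u)=n$. For $0\le k\le K:=\lfloor\log_2 n\rfloor$ set $\mu_k:=\max\{\omega^u(x):d_c(x,u)=2^k\}$. Because $\omega^u$ is harmonic off $V_0$ and vanishes on $V_0\cap\{d_c(\cdot,u)>2^k\}$, the maximum principle on $\{d_c(\cdot,u)\ge2^k\}$ gives $\mu_{k+1}\le\mu_k$, and likewise $\omega^u_A\le\mu_{K-1}$ since $d_c(A,u)=n\ge2^{K-1}$. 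The crucial input is the per‑annulus energy lower bound
\[
\mathcal E_{\mathrm{Ann}_k}(\omega^u):=\sum_{e\subset\mathrm{Ann}_k}\eta_e(\nabla\omega^u)_e^2\ \ge\ \frac{c_0}{M}\,\mu_k^2,\qquad \mathrm{Ann}_k:=\{2^k\le d_c(\cdot,u)<2^{k+1}\}\cap\mathcal T ,
\]
for an absolute constant $c_0>0$: inside $\mathrm{Ann}_k$ the function $\omega^u$ attains $\mu_k$ on the inner sphere and is $0$ on the arc $V_0\cap\mathrm{Ann}_k$ (which has $\asymp2^k$ vertices), and one connects a macroscopic, $\asymp2^k$‑vertex portion of the inner sphere on which $\omega^u$ is comparable to $\mu_k$ — available from a quantitative oscillation (Harnack‑type) estimate for the uniformly elliptic harmonic function $\omega^u$ — to that arc by $\asymp2^k$ essentially disjoint paths of length $\asymp2^k$ lying in $\mathrm{Ann}_k$, applies Cauchy–Schwarz along each path, and sums (the $M$‑dependence of the oscillation estimate and the degree bound are absorbed into $c_0$). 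The edge sets of $\mathrm{Ann}_0,\dots,\mathrm{Ann}_{K-1}$ are pairwise disjoint, so $\frac{c_0}{M}\sum_{k=0}^{K-1}\mu_k^2\le\mathcal E(\omega^u)\le4M$, i.e. $\sum_{k<K}\mu_k^2\le4M^2/c_0$; by monotonicity $K\mu_{K-1}^2\le\sum_{k<K}\mu_k^2$, so $\mu_{K-1}\le2M/\sqrt{c_0K}$, and as $K\ge\tfrac12\ln n$ for $n$ large we conclude $\omega^u_A\le\mu_{K-1}\le20M/\sqrt{\ln n}$, hence $|(\Delta f)_u|=\omega^u_A\le20M/\sqrt{\ln n}$.

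\emph{Main obstacle.} Everything hinges on the per‑dyadic‑annulus estimate $\mathcal E_{\mathrm{Ann}_k}(\omega^u)\gtrsim\mu_k^2/M$. Since $\eta$ is only assumed to lie in $[1/M,M]$, no explicit sub/supersolution barrier works (a radial "discrete logarithm'' fails to be robustly superharmonic against adversarial $\eta$), so one must actually control $\omega^u$ on a full scale‑$2^k$ slab of the sphere, i.e. invoke a quantitative oscillation/Harnack estimate for uniformly elliptic conductances with constants depending only on $M$ — this is the genuine work, and its $M$‑dependence is precisely what forces the linear factor $M$ in the conclusion. The $1/\sqrt{\ln n}$ rate (much weaker than the true $O(1/n)$) is the price of this soft scheme: a \emph{constant}, hence non‑summable, lower bound per scale, summed over the $K\asymp\ln n$ dyadic scales separating $u$ from $A$, yields the $\sqrt{\ln n}$ and nothing better.
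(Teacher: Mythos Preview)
Your Step 1 is correct and is exactly the paper's reduction: the auxiliary function $\omega^u$ is the paper's $g$, Green's identity gives $(\Delta f)_u=-g_A$, and the Dirichlet principle with the indicator test function gives $\mathcal E(g)\le 4M$.

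The gap is in Step 2. Your per–dyadic-annulus lower bound $\mathcal E_{\mathrm{Ann}_k}(\omega^u)\ge c_0\mu_k^2/M$ is not proved; you explicitly defer it to a ``quantitative oscillation/Harnack estimate'' for arbitrary conductances in $[1/M,M]$, and you correctly identify this as the main obstacle. Without that estimate the argument collapses: a single path in $\mathrm{Ann}_k$ from the max point on the inner sphere to $V_0$ has length $\asymp 2^k$, so Cauchy--Schwarz only yields $\mathcal E_{\mathrm{Ann}_k}\gtrsim \mu_k^2/(2^kM)$, and summing over $O(\log n)$ dyadic scales gives nothing useful. To recover the claimed constant-per-scale bound you would need $\asymp 2^k$ essentially disjoint paths, each with drop $\asymp\mu_k$, and that is exactly what requires the unproved Harnack input.

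The paper avoids this entirely by a much more elementary decomposition. Instead of dyadic annuli it uses the \emph{linear} spheres $U_k=\{d_c(\cdot,u)=k\}$ for $k=1,\dots,[n/2]$, and on each $U_k$ it uses only a \emph{single} path. The key geometric observation (which you noted but did not exploit) is that for $k\le n/2$ the sphere $U_k$ always meets $V_0\setminus\{u\}$, where $g=0$; moreover $U_k$ spans a connected arc in the edge graph with $|U_k|\le 3k+1$. By the maximum principle on $\{d_c(\cdot,u)\ge k\}$ one has $g_A\le\max_{U_k}g=:g_{v_k}$, and a path inside $U_k$ of length $\le 3k$ joins $v_k$ to a point of $V_0$ where $g=0$. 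Cauchy--Schwarz along that one path gives energy at least $g_A^2/(3kM)$. These paths lie on distinct spheres and hence use disjoint edges, so
\[
\mathcal E(g)\ \ge\ \sum_{k=1}^{[n/2]}\frac{g_A^2}{3kM}\ \ge\ \frac{g_A^2\,\ln n}{100M},
\]
and comparing with $\mathcal E(g)\le 4M$ gives $g_A\le 20M/\sqrt{\ln n}$. No Harnack or oscillation estimate is needed: the $\ln n$ comes from the harmonic series $\sum_{k\le n/2}1/k$, not from counting dyadic scales.
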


We will prove Lemma \ref{1.3}, Lemma \ref{891}, and Theorem \ref{1.2} in order.

The simplest way to see Lemma \ref{1.3} is to use the theory of electric
network. We put a resistance of $\frac{1}{\eta_{ij}}$ Ohms at the
edge $[ij]$  (if $\eta_{ij}=0$, the resistance is $\infty$, or
remove edge $[ij]$ from the network). Now place a one-volt battery
at vertex $A$ and ground every vertex in $V_0$. Then Kirchhoff's
laws show that the voltage $f_i$ at the vertex $i$ solves the
Dirichlet problem $( \Delta f)_i=0$ for all $i \in V-\{A\}\cup
V_0$, $f_A=1$ and $f|_{V_0}=0$. Ohm's law says
$\eta_{ij}(f_i-f_j)$ is the electric current through the edge
$[ij]$. Since the resistance is symmetric with respect to the
symmetry $\tau$,  the currents in the network are the same as the
currents in the quotient network $\T/\tau$. In the quotient
network $\T/\tau$, there is only one edge $e_A$ from the vertex A.
Therefore, the current through any edge is at most the current
$\frac{1}{2}|(\Delta f)_A|$ through $e_A$ (in the network
$\T/\tau$). This shows $|\eta_{ij}(f_i-f_j)| \leq
\frac{1}{2}|(\Delta f)_A|$.

\begin{proof}[Proof of lemma \ref{1.3}] Removing all edges $[ij]$ for which $\eta_{ij}=0$ from
the graph $(V, E)$, we obtain a finite collection of disjoint
connected subgraphs $\Gamma_1, ..., \Gamma_N$ from $(V, E)$.  By
construction, the associated Laplace operators on $\Gamma_i$ with
conductance $\eta|_{E(\Gamma_i)}$ is the restriction of the
Laplace operator $\Delta$ to $V(\Gamma_i)$. By the maximum
principle (Proposition \ref{mphar}), the function
$f|_{V(\Gamma_m)}$ is a constant and (\ref{90e}) holds unless
$\Gamma_m$ contains the vertex $A$ and some vertex in $V_0$.
Therefore, it suffices to prove the lemma for those edges $[ij]$
in the connected graph $\Gamma_m=(V', E')$ such that $A \in V'$
and $V'\cap V_0 \neq \emptyset$.  Let $A_1, A_2=\tau(A_1)$ be the
vertices adjacent to $A$. Since $\tau(A)=A$,
 $\eta \tau=\eta$ and $V' \cap V_0 \neq \emptyset$, we have $\tau(\Gamma_m)=\Gamma_m$ and $A_1, A_2
\in V'$.

We will work on the graph $\Gamma_m=(V',E')$ from now on.  Using
the maximum principle for $f -f\tau$, we see that $f=f\tau$. By
replacing $f$ by $-f$ if necessary, we may assume that $f_A >0$.
By the maximum principle, we  have that  $0 \leq f_i < f_A$ for
all $i \in V'-\{A\}$.

Take an edge $[ij]$ in the graph $\Gamma_m$.  If $\tau
\{i,j\}=\{i,j\}$, then $\tau_i=j$ and $\tau_j=i$. This implies
$f_i=f \tau_i=f_j$ and (\ref{90e}) holds.  If $\tau \{i,j\}
=\{i',j'\} \neq \{i,j\}$, say $\tau_i=i', \tau_j=j'$, then
$f_i=f_{i'}, f_j =f_{j'}$. We may assume that $f_i \leq f_j$. If
$f_i=f_j$, then (\ref{90e}) holds. Hence we may assume $f_i < f_j$.
If $j=A$, then $i=A_1$ or $A_2$. Due to $f_{A_1}=f_{A_2}$, then
(\ref{90e}) holds. If $j \neq A$, then by the maximum principle
applied to $f$ on the subgraph $(V'-\{A\}, E'-\{AA_1, AA_2\})$, we
conclude that $f_{A_1} \geq f_j > f_i$. Let $U=\{ k \in V'-\{A\} |
f_k
> f_i\}$. By definition, $j,j', A_1, A_2 \in U$,  $i,i',A \notin
U$, and $V_0 \cap U=\emptyset$.  This shows $(\Delta
 f)_k=0$ for all $k \in U$ and hence $\sum_{k \in U}
( \Delta f)_k=0$.  By Green's formula (\ref{green}), $$\sum_{k \in
U} (\Delta f)_k=\sum_{k \in U, l \notin U, k\sim l}
\eta_{kl}(f_k-f_l) =0.$$ If  $l \notin U \cup\{A\}$, then by
definition $f_i \geq f_l$. Therefore, if $k \in U$, $k \sim l$,
and $l \notin U \cup\{A\}$, then $f_k>f_i\geq f_l$. This shows,
$$ 0=\sum_{k \in U, l \notin U, l \sim k} \eta_{kl}(f_k -f_l)$$
$$ =\sum_{k \in U, l \notin U \cup \{A\}, l \sim k} \eta_{kl}(f_k
-f_l)+\sum_{ k \sim A} \eta_{kA}(f_k -f_A)$$
$$\geq (\triangledown f)_{ji}+(\triangledown f)_{j'i'} -(\Delta
f)_A.$$  Therefore, $|(\Delta f)_A |\geq 2 |(\triangledown
f)_{ij}|$ since $(\triangledown f)_{ij}=(\triangledown f)_{i'j'}$.
\end{proof}

\begin{proof}[Proof of Lemma \ref{891}]
For the given $u \in V_0$, construct a function $g: V \to \R$ by
solving the Dirichlet problem: $(\Delta g)_i =0, \forall i \in
V-V_0$, $g_u=1$ and $g|_{V_0-\{u\}}=0$.  By the maximum principle
(Proposition \ref{mphar}),  $0 \leq g_i \leq 1$ for all $i$. Using
 Green's identity that $\sum_{i \in V} [f_i (\Delta g)_i
-g_i(\Delta f)_i]=0$, we obtain $g_A(\Delta f)_A +g_u(\Delta
f)_u=0$. Since $(\Delta f)_A=1$ and $g_u=1$, we see
$$ (\Delta f)_u =-g_A.$$

\begin{figure}[ht!]
\begin{center}
\begin{tabular}{c}
\includegraphics[width=0.37\textwidth]{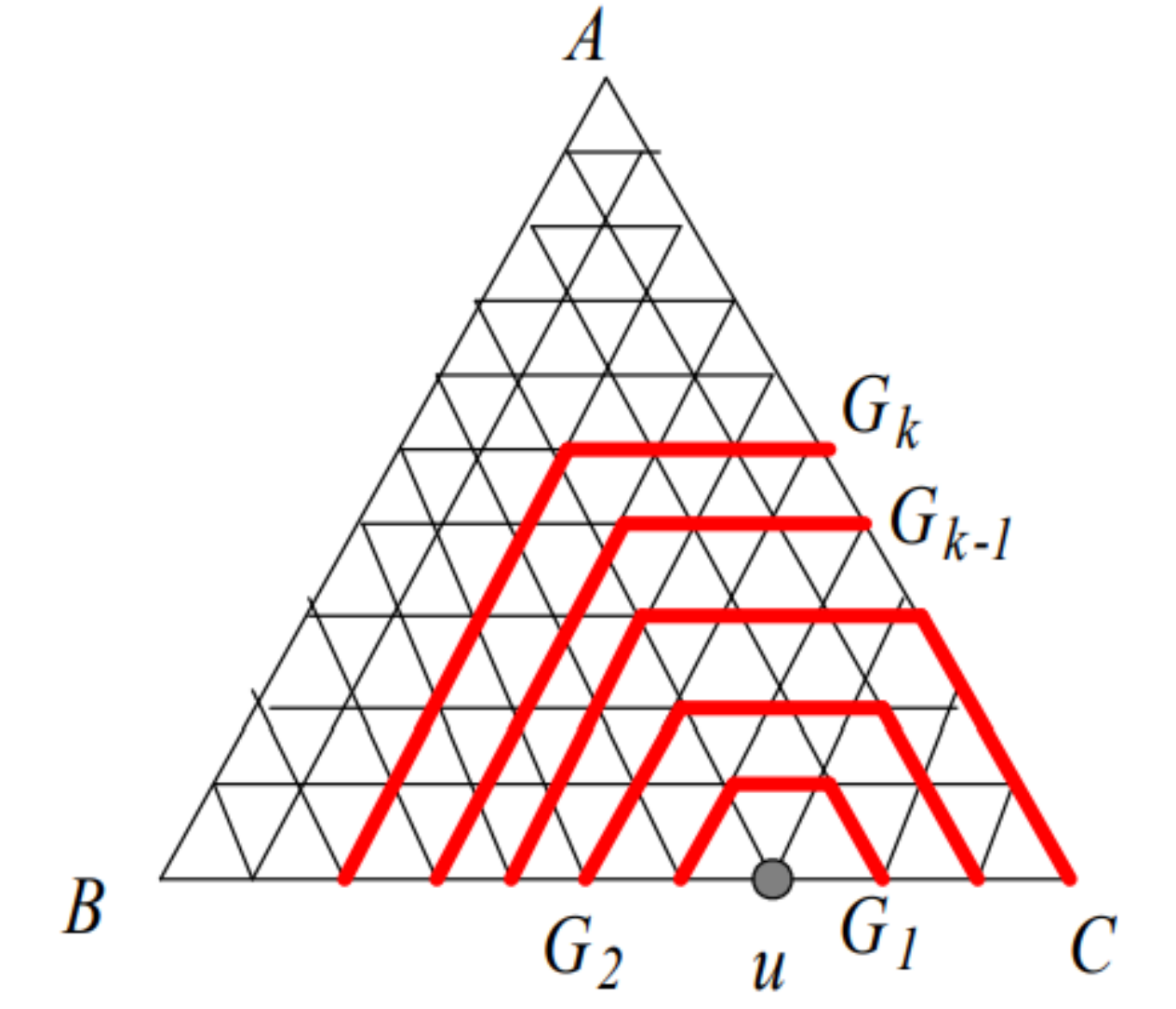}
\end{tabular}
\end{center}
\caption{Layers in triangle ABC} \label{subdivion}
\end{figure}

Therefore, it suffices to show that $|g_A| \leq
\frac{20M}{\sqrt{\ln n}}.$  For this purpose, take $k \leq
[\frac{n}{2}]$ and define $U_k =\{ i \in V| d_c(i,u)=k\}$ where
$d_c(i,j)$ is the combinatorial distance in the graph $\T^{(1)}$.
Let $G_k$ be the subgraph of $\T^{(1)}$ whose edges are $[ij]$
where $i,j \in U_k$. Due to $k \leq [\frac{n}{2}]$,  $U_k \cap V_0
\neq \emptyset$,  and $G_k$ is topologically an arc. By the
maximum principle applied to $g$ on the subgraph whose edges
consist of $[ij]$ with $i, j \in \{v \in V| d_c(v, u) \geq k\}$,
we obtain $g_A \leq \max_{i \in U_k} g_i$. Let $v_k \in U_k$ such
that $g_{v_k}=\max_{i \in U_k} g_i$  and
edge path $E_k$ be the shortest edge path in $G_k$ joining $v_k$ to a point $u_{k}$ in
$V_0-\{u\}$.  By construction $g_{u_{k}}=0$. Since $U_k$ contains at
most $3k+1$ vertices, the length  of $E_k$ is at most $3k$. The
Dirichlet energy $\mathcal E(g)$ of $g$ on $\T^{(1)}$ is given by
\begin{equation}\label{eq8} \mathcal E(g) =\frac{1}{2}\sum_{i \sim j}
\eta_{ij}(g_i-g_j)^2
 \geq
\sum_{k=1}^{[\frac{n}{2}]}
\mathcal E_k,$$ where
$$\mathcal E_k=\frac{1}{2}\sum_{[ij]\in \bar{E_k}}\eta_{ij}
(g_i-g_j)^2,\end{equation} and $\bar{E_k}$ be the set of oriented
edges in $E_k$.  Suppose $w_0=v_k \sim w_1 \sim w_2 \sim ... \sim
w_{l_k}=u_k$ are the vertices in the edge path $E_k$ where $l_k
\leq 3k$. Using the Cauchy-Schwartz inequality, we obtain
\begin{equation}\label{eq9} \mathcal
E_k=\sum_{i=1}^{l_k}\eta_{w_iw_{i-1}}
(g_{w_{i}}-g_{w_{i-1}})^2
\end{equation} $$\geq \frac{1}{M}\sum_{i=1}^{l_k} (g_{w_{i}}-g_{w_{i-1}})^2$$
$$\geq \frac{1}{M l_k} [\sum_{i=1}^{l_k}
(g_{w_{i}}-g_{w_{i-1}})]^2$$
$$\geq \frac{1}{3kM} (g_{v_k}-g_{u_{k}})^2
=\frac{g_{v_{k}}^2}{3kM} \geq \frac{g_A^2}{3kM}.$$

By (\ref{eq8}) and (\ref{eq9}), we obtain
\begin{equation}\label{eq10} \mathcal E(g) \geq
\frac{g_A^2}{3M}\sum_{k=1}^{[\frac{n}{2}]} \frac{1}{k} \geq
\frac{g_A^2 \ln(n)}{100M}.
\end{equation}
On the other hand, the Dirichlet principle says $\mathcal E(g)
=\min_{ h \in \R^V}\{ \frac{1}{2}\sum_{i \sim
j}\eta_{ij} (h_i-h_j)^2| h_u=1, h|_{V_0-\{u\}}=0\}.$ Take
$h\in \R^V$ to be $h_u=1$ and $h_i=0$ for all $i \in V-\{u\}$. We
obtain
$$\mathcal E(g) \leq \frac{1}{2}\sum_{i \sim j} \eta_{ij}(h_i
-h_j)^2 \leq 4M.$$ Combining this with (\ref{eq10}), we obtain
$$\frac{g_A^2 \ln(n)}{100M} \leq 4M,$$
i.e., $$g_A \leq \frac{20M}{\sqrt{ \ln(n)}}.$$
\end{proof}

\begin{proof}[Proof of Theorem \ref{1.2}]
We construct the smooth family $w(t) \in \R^V$ by
solving the system of ordinary differential equations (\ref{ode3})
where $(S, \T, l)=(\Delta ABC, \T, l_{st})$, $K^*|_{V-V_0\cup
\{A\}}=0$, $K^*_A=\pi-\alpha$ and $w_i(t)=0$ for $i\in V_0$.  By
the assumption that $\theta^i_{jk}(0) =\frac{\pi}{3}$ (i.e., $\T$
is an equilateral triangulation), $0\in W$ where the space $W$ is defined by
(\ref{wdomain}). By Lemma \ref{odeexist}, there exists a maximum
$s>0$ such that  a solution $w(t)$ to (\ref{ode3}) exists and
condition (d) holds for all $t \in [0, s)$.   We claim that $s
\geq 1$, $w(1)$ exists and $w(1)*l_{st}$ is a PL metric. In
particular, $w(1)*l_{st}$ satisfies condition (d) and $w(1) \in
W$.  Without loss of generality, let us assume that $s<\infty$. By
lemma \ref{odeexist} and condition (d),  we  obtain the following
two conclusions:
\begin{equation}\label{maxcon} \text{ $ \liminf_{t \to s^-} \eta_{ij}(w(t))=0$ for some $[ij]$,
\quad or $ \limsup_{ t \to s^-}
|\theta^i_{jk}(w(t))-\frac{\pi}{3}|= |\alpha-\frac{\pi}{3}|$ for
some $\theta^i_{jk}$.}
\end{equation}
The conclusion $\liminf_{t \to s^-} \theta^i_{jk}(w(t))=0$ is
ruled out by condition (d) which implies $\theta^i_{jk}(w(t)) \geq
\frac{\pi}{6}$.


We prove the claim that $s \geq 1$ as follows.
Since $\alpha \in
[\frac{\pi}{6}, \frac{\pi}{2}]$, we have $\frac{\pi}{3}+|\alpha
-\frac{\pi}{3}| \leq \frac{\pi}{2}$ and $\frac{\pi}{3}-|\alpha
-\frac{\pi}{3}| \geq \frac{\pi}{6}$. This shows, by $(d)$,
\begin{equation}\label{nondeg}\theta^i_{jk}(t) \in
[\frac{\pi}{6}, \frac{\pi}{2}] \quad \text{for all $t\in [0,s)$}.
\end{equation} In particular, $\cot(\theta^k_{ij})\geq 0$ and $\eta_{ij} \geq
\cot(\theta^k_{ij}) \geq 0$. Hence by definition we have
$$ |(\triangledown w')_{ij}|=\eta_{ij}|w_i'-w'_j| \geq
\ \cot(\theta^k_{ij})|w_i'-w_j'|.$$

By Lemma \ref{1.3} and the variation formula (\ref{curvatureevol})
that $\frac{dK_i}{dt}= (\Delta w')_i$, we obtain
$$2|(\triangledown w')_{ij}| \leq |(\Delta
w')_A|=|\frac{dK_A}{dt}| =|\alpha-\frac{\pi}{3}|.$$ This implies,
by (\ref{893}), the following,
\begin{equation}\label{ang-var} |\frac{ d\theta^k_{ij}}{dt}| \leq
\cot(\theta^i_{jk})|w_j'-w_k'| +\cot(\theta^j_{ik})|w_i'-w_k'|
\leq |(\triangledown w')_{jk}|+|(\triangledown w')_{ik}| \leq
 |\alpha -\frac{\pi}{3}|.\end{equation}

Therefore, for all $t \in [0, s)$,
\begin{equation}\label{18}|\theta^k_{ij}(t) -\frac{\pi}{3}|
=|\theta^k_{ij}(t) -\theta^k_{ij}(0)| \leq \int_0^t
|\frac{d\theta^k_{ij}(t)}{dt}|dt \leq t|\alpha-\frac{\pi}{3}| \leq
s|\alpha-\frac{\pi}{3}|.\end{equation} The above inequality shows
that $s \geq 1$. Indeed, if otherwise that $s<1$, using
(\ref{18}), we conclude that $\theta^i_{jk}(t) \in
[\frac{\pi}{3}-s|\alpha -\frac{\pi}{3}|, \frac{\pi}{3}+s|\alpha
-\frac{\pi}{3}|]$. In particular, $\liminf_{t \to s^-}
\eta_{ij}(t) \geq \cot(\frac{\pi}{3}+s|\alpha-\frac{\pi}{3}|)>0$
and $\limsup_{t \to s^-} |\theta^k_{ij}(t)-\pi/3| < |\alpha
-\pi/3| $. This contradicts (\ref{maxcon}).

 To see part (e), by (\ref{18}), if
$t \in [0,\frac{1}{2}]$, we have
$$|\theta^i_{jk}(t)-\frac{\pi}{3}| \leq
\frac{1}{2}|\alpha-\frac{\pi}{3}| \leq \frac{\pi}{12}, \quad
\text{i.e.,} \quad \theta^i_{jk}(t) \in [\frac{\pi}{4},
\frac{5\pi}{12}].$$

Now if $[ij]$ is an interior edge, then for $t\in[0,\frac{1}{2}]$
\begin{equation}\label{es5} |(\triangledown w')_{ij}|
=(\cot(\theta^k_{ij})+\cot(\theta^l_{ij}))|w'_i-w'_j|
\end{equation}
$$\geq
(1+\frac{\cot(\theta^l_{ij})}{\cot(\theta^k_{ij})})\cot(\theta^k_{ij})|w_i'-w_j'|$$
$$\geq (1+\cot(\frac{5\pi}{12}))\cot(\theta^k_{ij})|w_i'-w_j'|$$
$$ \geq \frac{5}{4} \cot(\theta^k_{ij})|w_i'-w_j'|.$$

If  $\theta^i_{jk}$ is an angle with $i \neq A$,  then either one
of the two edges $[ij]$, $[ik]$ is an interior edge, or  $i
\in\{B, C\}$. In the first case, say $[ij]$ is an interior edge,
using (\ref{es5}) and Lemma \ref{1.3}, for $t \in [0,1/2]$, we
have
\begin{equation}\label{eq38} |\frac{d \theta^i_{jk}}{dt}| \leq
\cot(\theta^k_{ij})|w'_i-w'_j| +\cot(\theta^j_{ik})|w_i'-w_k'|
\end{equation}
$$\leq \frac{4}{5} |(\triangledown w')_{ij}|+|(\triangledown
w')_{ik}|$$
$$\leq (\frac{4}{5}+1)\frac{|(\Delta w')_A|}{2} = \frac{9}{10} |\alpha
-\frac{\pi}{3}| \leq \frac{9}{10} \cdot
\frac{\pi}{6}=\frac{3\pi}{20}.$$

 In the second case that $i \in \{B,C\}$, one of the edges $[ij]$ or
 $[ik]$, say $[ij]$ is in the edge $BC$ of $\Delta ABC$,  i.e.,
 $w'_i=w'_j=0$. Therefore by Lemma \ref{1.3}, for $t \in
 [0,1/2]$, we have
\begin{equation}\label{eq39}|\frac{d \theta^i_{jk}}{dt}| \leq
\cot(\theta^k_{ij})|w'_i-w'_j| +\cot(\theta^j_{ik})|w_i'-w_k'|
\leq |(\triangledown w')_{ik}|$$ $$ \leq \frac{|(\Delta w')_A|}{2}
= \frac{1}{2} |\alpha -\frac{\pi}{3}| \leq \frac{3\pi}{20}.
\end{equation}

%

Therefore  if $\theta^i_{jk}$ is not the angle at $A$ and $t\in
[0,1)$, by (\ref{eq38}) and (\ref{eq39}), we have

$$ |\theta^i_{jk}(t) - \frac{\pi}{3}|=|\theta^i_{jk}(t)-\theta^i_{jk}(0)|\leq
\int_0^t|\frac{d \theta^i_{jk}}{dt}| dt \leq  \int_0^1|\frac{d
\theta^i_{jk}}{dt}|dt  = \int_0^{1/2}|\frac{d
\theta^i_{jk}}{dt}|dt + \int_{1/2}^1|\frac{d
\theta^i_{jk}}{dt}|dt$$
$$ \leq \frac{1}{2} \cdot \frac{3\pi}{20}
+\frac{1}{2} |\alpha -\frac{\pi}{3}| \leq
 \frac{3\pi}{40} +\frac{1}{2}\cdot
\frac{\pi}{6}=\frac{19\pi}{120}.$$ Therefore, $\theta^i_{jk}(t)
\in [\frac{21 \pi}{120}, \frac{59\pi}{120}]\subset (\frac{\pi}{6},
\frac{\pi}{2})$ for all $t\in [0, 1)$. Since conditions (d) and
(e) hold for all $t \in [0, 1)$, by definition of $\eta_{ij}$, we
see $\liminf_{ t \to 1} \eta_{ij}(w(t)) >0$.  Now we prove that
$w(1)$ is defined and $w(1)*l_{st}$ is a PL metric.  By the
estimates above, there exists $\delta>0$ such that for all $t \in
[0,1)$, $w(t) \in \mathcal W_{\delta}=\{ w \in W| \theta^i_{ij}
\geq \delta, \eta_{ij} \geq \delta\}$.  By Lemma \ref{odeexist},
the maximum time $t_0$ for which $w(t)$ exists on $[0, t_0)$ must
be greater than $1$. Therefore, $w(1)$ exists and $w(1) \in W$.
Since (d) and (e) are closed conditions, it follows that
$w(1)*l_{st}$ satisfies (d) and (e).




Now we prove part (f). By parts (d) and (e), we have
$\theta^i_{jk}(t) \in [\frac{\pi}{6}, \frac{59\pi}{120}]$ for $i
\neq A$ and $\theta^A_{jk} \in [\frac{\pi}{6}, \frac{\pi}{2}]$.
Since the conductance $\eta_{ij}$ is either $\cot(\theta^k_{ij})$
or a sum $\cot(\theta^k_{ij})+\cot(\theta^l_{ij})$, we obtain for
all edges $[ij]$ in $\T$, $ \eta_{ij}(t) \in [\cot(\frac{59
\pi}{120}), 2 \cot(\frac{\pi}{6})] \subset [\frac{1}{100}, 100]$.
Let $K_i(t)$ be the curvature of the metric $w(t)*l_{st}$ at the
vertex $i$. By Lemma \ref{891} for $f=\frac{1}{|\alpha
-\pi/3|}\frac{d w(t)}{dt}$ and $M=100$, we conclude that for all
$i \in V_0$,
$$|\frac{dK_i(t)}{dt}|=|(\Delta w')_i| \leq \frac{2000|\alpha
-\pi/3|}{\sqrt{\ln(n)}} \leq \frac{2000}{\sqrt{\ln(n)}}.$$
Therefore, $|K_i(t)-K_i(0)| \leq \int^t_0 |\frac{dK_i(t)}{dt}| dt
\leq \int^1_0 |\frac{dK_i(t)}{dt}| dt \leq
\frac{2000}{\sqrt{\ln(n)}}.$

Finally to prove (\ref{totalcur}), if $\alpha=\pi/3$, then all
$w(t)=0$ and $K(t)=K(0)$ and the result follows.  If $\alpha \neq
\pi/3$, we first claim that $w'_A(t) \neq 0$ for each $t$. Indeed,
if otherwise that $w_A'(t_1)=0$ for some $t_1$, then by the
maximum principle applied to the Dirichlet problem: $(\Delta
w'(t_1))_i=0$ for $i \in V-\{A\}\cup V_0$  and $w_i'(t_1)=0$ for
$i \in V_0\cup\{A\}$, we conclude $w_i'(t_1)=0$ for all $i\in V$.
In particular, $\alpha-\pi/3=(\Delta w')_A=0$ at $t=t_1$ which is
a contradiction. Therefore $w_A'(t) \neq 0$ and by the maximum
principle again $w_A'(t)w_i'(t) \geq 0$.
 Now if $i \in V_0$, then $K_i'(t)=\sum_{j \sim
i} \eta_{ji}(w_i'-w_j')=-\sum_{j \sim i} \eta_{ji} w_j'$. Since
$\eta_{ij} \geq 0$, therefore $w_A'(t)K_i'(t) \leq 0$ for $i \in
V_0$. It follows that for  all $i \in V_0$,
$(K_i(t)-K_i(0))w'_A(t) \leq 0$.  At the vertex $A$,
$|K_A(t)-K_A(0)|= | t(\alpha -\frac{\pi}{3})| \leq \frac{\pi}{6}.$
Therefore by the Gauss-Bonnet theorem that $K_A(t)+\sum_{ i \in
V_0} K_i(t)=K_A(t)+\sum_{i \in V} K_i(t)=2\pi$ and that
$K_i(t)-K_i(0)$ have the same signs for $i \in V_0$, we obtain $
\sum_{i \in V_0}|K_i(t)-K_i(0)|= |\sum_{i \in V_0} (K_i(t)-K_i(0))
|= |K_A(t)-K_A(0)|\leq \frac{\pi}{6}.$

\end{proof}

\subsection{A gradient estimate of discrete harmonic functions}

The proof Theorem \ref{1.1} is based on the following estimate.
Given a triangulated surface $(S, \T)$,  $v \in V(\T)$ and $r>0$,
we use $B_r(v)=\{ j \in V(\T) | d_c(j,v) \leq r\}$ to denote
combinatorial ball of radius $r$ centered at the vertex $i$ where $d_c$ is
the combinatorial distance on $\T^{(1)}$.

\begin{proposition} \label{311} Suppose $(\mathcal P, \T', l)$ is
polygonal disk  with an equilateral triangulation and
 $\T$ is the $n$-th standard subdivision of the triangulation
$\T'$ with $n \geq e^{10^6}$. Let $\eta: E=E(\T) \to [\frac{1}{M},
M]$ be a conductance function and $\Delta: \R^V \to \R^V$ be the
associated Laplace operator.  Let
 $V_0 \subset V(\T)$ be a thin subset such that for all $v \in V$ and
$m \leq n/2$, $|B_m(v) \cap V_0| \leq 10m$.   If $f: V \to \R$
satisfies $(\Delta f)_i=0$ for $i \in V-V_0$, $|(\Delta f)_i| \leq
\frac{M}{\sqrt{\ln(n)}}$ for  $i \in V_0$ and $\sum_{i \in V_0}
|(\Delta f)_i| \leq M$, then for all edges $[uv]$ in $\T$,
$$|f_u-f_v| \leq \frac{200M^3}{\sqrt{\ln(\ln(n))}}.$$
\end{proposition}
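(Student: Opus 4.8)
The strategy is to bound the gradient $|f_u-f_v|$ on each edge by a multi-scale energy argument, modeled on the proof of Lemma \ref{891} but iterated. Since $(\Delta f)_i=0$ off $V_0$ and the total ``charge'' $\sum_{i\in V_0}|(\Delta f)_i|$ is only $M$ while each individual charge is at most $M/\sqrt{\ln n}$, the function $f$ is ``nearly harmonic'' on $\mathcal P$. The point is to run a Rodin--Sullivan-style annulus/length-area argument on the combinatorial balls $B_m(v)$, using the hypothesis $|B_m(v)\cap V_0|\le 10m$ to control how much the source term can disrupt harmonicity on each dyadic scale.

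\medskip
\textbf{Step 1: reduce to a Dirichlet-energy bound.} Fix an edge $[uv]$ and assume, after subtracting a constant and possibly replacing $f$ by $-f$, that $f_u\ge f_v$ and we wish to bound $f_u-f_v$. First I would replace $f$ by the solution of a Dirichlet problem on a large ball $B_R(u)$ with $R\asymp n$: write $f=f_0+h$ where $(\Delta f_0)_i=(\Delta f)_i$ on $B_R(u)$ with $f_0=0$ on the boundary sphere, and $h$ is harmonic on the interior of $B_R(u)$ with the correct boundary values. The source part $f_0$ is handled by the Green's-function / energy estimate exactly as in Lemma \ref{891}: because the sources live on a thin set ($|B_m\cap V_0|\le 10m$) and have total mass $\le M$, one gets $|f_0|\le CM^2/\sqrt{\ln n}$ on $B_{R/2}(u)$, hence its contribution to any edge gradient is of that order, which is far smaller than the claimed $200M^3/\sqrt{\ln\ln n}$. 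So the real work is estimating $|h_u-h_v|$ for $h$ genuinely discrete-harmonic on a ball of combinatorial radius $\asymp n$.

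\medskip
\textbf{Step 2: gradient estimate for harmonic $h$ via a length-area / annulus argument.} For the harmonic function $h$ I would use the classical trick: if $|h_u-h_v|$ were large, then the level sets of $h$ between the values $h_v$ and $h_u$ would separate $u$ from $v$, and each such level curve, followed around, must reach the boundary sphere $\partial B_R(u)$ or close up; combining with the bound $\mathcal E(h)\le\mathcal E(f)\le$ (something controlled) and the Cauchy--Schwarz estimate over the $\asymp\ln n$ many dyadic annuli $\{m\le d_c(\cdot,u)<2m\}$ gives $|h_u-h_v|^2\lesssim M\,\mathcal E(h)/\ln n$. The subtlety is that $\mathcal E(f)$ is a priori only bounded by $M\cdot(\text{number of edges})$, which grows with $n$; this is why the final bound degrades from $1/\sqrt{\ln n}$ to $1/\sqrt{\ln\ln n}$. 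To get around it, I would \emph{bootstrap}: use Step 1's crude bound $|f_u-f_v|\le CM^2/\sqrt{\ln n}$ on edges \emph{near the boundary of a slightly smaller ball} $B_{\sqrt n}(u)$ to cap $\mathcal E(h)$ on that smaller ball by $CM^4\cdot(\#\text{edges in }B_{\sqrt n})/\ln n \lesssim M^4 n/\ln n$, then re-run the annulus argument on the $\asymp\ln(\sqrt n)\asymp\ln n$ scales inside $B_{\sqrt n}(u)$. Iterating this ``shrink the ball, reuse the previous bound'' scheme a bounded number of times trades each factor of $\ln$ for a square root, and after the iteration one is left with $\ln\ln n$ in the denominator and an absolute power of $M$ (which is where the $M^3$ comes from: one power from the conductance ratio in each Cauchy--Schwarz, accumulated over the finitely many bootstrap rounds).

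\medskip
\textbf{Step 3: assemble and fix constants.} Combine the harmonic estimate with the source estimate from Step 1; since $CM^2/\sqrt{\ln n}\le M^3/\sqrt{\ln\ln n}$ for $n\ge e^{10^6}$, the source term is absorbed, and tracking the constants through the bounded number of bootstrap steps (each contributing a universal multiplicative constant and at most one power of $M$) yields the clean bound $|f_u-f_v|\le 200M^3/\sqrt{\ln\ln n}$ with room to spare. \textbf{The main obstacle} is Step 2: getting an honest a priori bound on the Dirichlet energy of the harmonic part. A harmonic function on a huge combinatorial ball has energy that scales with the ball's size, so the naive length-area argument only gives $1/\sqrt{\ln n}$ after one pass but with an $n$-dependent energy in the numerator; extracting a genuine decay requires the bootstrap, and making the bootstrap terminate with only $M^3$ (not a power of $M$ growing with the number of rounds) requires choosing the ball radii to shrink like $n\mapsto\sqrt n\mapsto\sqrt{\sqrt n}\mapsto\cdots$ so that exactly one extra $\ln$ is consumed per round and the process halts after $O(1)$ rounds once the radius is $O(1)$. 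Care is also needed that the thin-set hypothesis $|B_m(v)\cap V_0|\le 10m$ survives restriction to the smaller balls — it does, since it is stated uniformly in $v$ and $m$.
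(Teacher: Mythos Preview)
Your approach is genuinely different from the paper's and contains a real gap.

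\textbf{What the paper does.} The paper never decomposes $f$ or bounds its energy. Instead it introduces the \emph{dual} function $g$ solving $(\Delta g)_i=0$ for $i\neq u,v$, $g_u=1$, $g_v=0$. Green's identity gives
\[
f_u-f_v=\frac{1}{(\Delta g)_u}\sum_{i\in V_0} g_i\,(\Delta f)_i,
\]
and $|(\Delta g)_u|\ge 1/M$. Since $\sum_{i\in V_0}(\Delta f)_i=0$, one may replace $g_i$ by $g_i-g_a$ for any reference vertex $a$. Choosing $r=\lfloor(\ln n)^{1/3}\rfloor$ and $a\notin B_r(u)$, the sum splits into the near part $V_0\cap B_r(u)$ (at most $10r$ terms, each $|(\Delta f)_i|\le M/\sqrt{\ln n}$, $|g_i-g_a|\le 2$) and the far part, where Lemma~\ref{318} gives the oscillation bound $|g_i-g_a|\le 100M/\sqrt{\ln r}$ for all $i\notin B_r(u)$, paired against $\sum|(\Delta f)_i|\le M$. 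The $\sqrt{\ln\ln n}$ arises in one shot from $\sqrt{\ln r}$ with $r\asymp(\ln n)^{1/3}$; there is no bootstrap.

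\textbf{The gap in your argument.} Your bootstrap in Step~2 is circular. Step~1 only controls the source part $f_0$; it gives no bound on $|f_u-f_v|$ itself, because $h=f-f_0$ is harmonic on $B_R(u)$ with boundary values $f|_{\partial B_R}$, about which the hypotheses say nothing. Yet in Step~2 you invoke ``Step~1's crude bound $|f_u-f_v|\le CM^2/\sqrt{\ln n}$ on edges near the boundary of $B_{\sqrt{n}}(u)$'' to cap $\mathcal E(h)$ there. No such bound has been established: on those edges $f=f_0+h$ and $h$ is completely uncontrolled. More broadly, there is no a~priori bound on $\mathcal E(f)$ or on $\operatorname{osc} f$ from the hypotheses (indeed $f$ is only determined up to an additive constant), so any scheme that needs an initial energy or oscillation input cannot get started. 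The reason the paper's duality trick works is precisely that it never asks for such input: it trades all the unknown information about $f$ for known information about $\Delta f$, and puts the analytic work into the single oscillation estimate for the explicit dipole $g$ (Lemma~\ref{318}), which is a clean one-scale length--area computation.

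A secondary issue: your Step~1 bound $|f_0|\le CM^2/\sqrt{\ln n}$ does not follow ``exactly as in Lemma~\ref{891}''. That lemma estimates a specific harmonic function on a triangle at the apex; here you would need Green's-function oscillation bounds on a large ball with sources on a thin set, which is a different (and nontrivial) estimate, essentially equivalent in difficulty to the paper's Lemma~\ref{318}.
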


\begin{proof} Fix an edge $[uv]$ in the triangulation $\T$. Construct a function $g: V=V(\T) \to \R$ by solving  the
Dirichlet problem $(\Delta g)_i=0$ for $i \neq u,v$, and $g_u=1,
g_v=0$. By the maximum principle, we have $0 \leq g_i \leq 1$. By
the identity $\sum_{i\in V} (\Delta g)_i=0$ and that $g$ is not a
constant, we obtain $(\Delta g)_u=-(\Delta g)_v \neq 0$. Using the
Green's identity that $ \sum_{i \in V} (f_i (\Delta g)_i - g_i
(\Delta f)_i) =0$ and the assumptions of $f,g$, we obtain
$$ f_u (\Delta g)_u+f_v (\Delta g)_v -\sum_{i \in V_0} g_i (\Delta
f)_i=0.$$  Since $(\Delta g)_v=-(\Delta g)_u$, this shows
$$ f_u-f_v =\frac{1}{(\Delta g)_u}\sum_{ i \in V_0} g_i (\Delta
f)_i.$$ On the other hand, by the maximum principle $ g_u -g_j
\geq 0$, we have $ |(\Delta g)_u|=|\sum_{j \sim u}
\eta_{ju}(g_j-g_u)| = \sum_{j \sim u} \eta_{ju}(g_u -g_j) \geq
\frac{1}{M} (g_u-g_v) =\frac{1}{M}$. Therefore,
\begin{equation}\label{890} |f_u-f_v| \leq M |\sum_{i \in V_0} g_i (\Delta
f)_i|. \end{equation}

To estimate the right-hand side of (\ref{890}), take $r=[\sqrt[3]{\ln(n)}]$ and select $a
\notin B_r(u)$. Then using $0=\sum_{i \in V} (\Delta f)_i=\sum_{i
\in V_0} (\Delta f)_i$, $|g_i|\leq 1$, (\ref{890}) and the Lemma
\ref{318} below, we obtain
$$ |f_u-f_v| \leq M|\sum_{i \in V_0} g_i (\Delta f)_i| =  M|\sum_{i \in V_0} (g_i-g_a) (\Delta f)_i|   \leq
M\sum_{i \in V_0} |(g_i -g_a)||(\Delta f)_i|$$
$$\leq M ( \sum_{i \in V_0 \cap B_r(u)} |g_i-g_a| |(\Delta
f)_i|+ \sum_{ i \in V_0-B_r(u)} |g_i-g_a||(\Delta f)_i|)$$
$$\leq M (\frac{2M}{\sqrt{\ln(n)}}|V_0\cap B_r(u)| +
\frac{100M}{\sqrt{\ln(r)}} \sum_{i \in V_0} |(\Delta f)_i|)$$
$$\leq M[ \frac{20 M \sqrt[3]{\ln(n)}}{\sqrt{\ln(n)}}
+\frac{100M^2}{\sqrt{\ln (\sqrt[3]{\ln(n)})}}]$$
$$\leq \frac{200M^3}{\sqrt{\ln(\ln(n))}}.$$

In the last two steps, we have used $|V_0\cap B_r(u)|\leq 10r
=10\sqrt[3]{\ln n}$ and  $n \geq  e^{10^6}$ to ensure
$\frac{1}{\sqrt{\ln(\sqrt[3]{\ln(n)})}} \geq
\frac{\sqrt[3]{\ln(n)}}{\sqrt{\ln(n)}}.$
\end{proof}

\begin{lemma} \label{318}
Assume $(\mathcal P,\mathcal T',l), \mathcal T,E,M,\eta$ and $\Delta$ are as given in Proposition \ref{311}, and
$g$ is as given in the proof of Proposition \ref{311}, i.e., $(\Delta g)_i=0$ for $i \neq u,v$, and $g_u=1,
g_v=0$.
If $100\leq r \leq \frac{n}{3}$ and $\{a,b\} \cap
B_r(u)=\emptyset$, then
$$ |g_a -g_b| \leq \frac{100M}{\sqrt{\ln (r)}}.$$
\end{lemma}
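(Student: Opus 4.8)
The plan is to estimate $|g_a - g_b|$ by a capacity/energy argument, exactly parallel to the proof of Lemma \ref{891} but carried out on a nested family of combinatorial circles surrounding the edge $[uv]$ rather than on an arc running to the boundary. First I would observe that $g$ is harmonic away from $\{u,v\}$, that $0 \le g_i \le 1$ everywhere by the maximum principle, and that the Dirichlet energy of $g$ is bounded above: testing the Dirichlet principle against the function that equals $1$ at $u$, $0$ at $v$, and (say) $0$ elsewhere, or more carefully against a function interpolating between $u$ and $v$ across the one or two triangles containing the edge $[uv]$, gives $\mathcal E(g) \le C M$ for an absolute constant $C$ (the contributions come only from the finitely many edges incident to $u$ and $v$, and $\eta_{ij} \le M$). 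This is the analogue of the ``$\mathcal E(g) \le 4M$'' step.

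Next I would set up the layers. For $k$ with $r/2 \le k \le r$ (roughly), let $C_k = \{ i \in V(\T) : d_c(i,u) = k \}$; since $\T$ is the $n$-th standard subdivision of a fixed equilateral triangulation of a \emph{polygonal disk} and $r \le n/3$, each $C_k$ is a combinatorial circle (a closed edge-loop in $\T^{(1)}$) separating $u$ from the boundary of $\mathcal P$, and it contains at most a bounded-times-$k$ vertices, hence its bounding loop has length at most $O(k) = O(r)$. Crucially, since $\{a,b\} \cap B_r(u) = \emptyset$, both $a$ and $b$ lie in the unbounded component of the complement of $C_k$ for every such $k$; therefore, by the maximum principle applied on the subgraph spanned by $\{ i : d_c(i,u) \ge k \}$, the value $g$ takes on $\{a,b\}$ is trapped between $\min_{C_k} g$ and $\max_{C_k} g$, so in particular $|g_a - g_b| \le \max_{C_k} g - \min_{C_k} g =: \mathrm{osc}_{C_k} g$. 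Now the oscillation of $g$ around the loop $C_k$ is at most the sum of $|g_i - g_j|$ over the edges of that loop, and by Cauchy–Schwarz this is at most $\sqrt{(\text{length of }C_k) \cdot \sum_{[ij]\in C_k} (g_i-g_j)^2} \le \sqrt{O(r)\cdot M \cdot \mathcal E_k}$, where $\mathcal E_k$ denotes the part of the Dirichlet sum carried by the edges of $C_k$. Hence $(g_a-g_b)^2 \le O(rM)\,\mathcal E_k$ for every $k$ in the range, and summing over the $\asymp r$ values of $k$ and using that the $\mathcal E_k$ for distinct layers involve essentially disjoint edge sets (so $\sum_k \mathcal E_k \le 2\mathcal E(g) \le O(M)$) yields $(g_a - g_b)^2 \cdot (\#k) \le O(rM) \cdot O(M)$; but one wants a logarithmic gain, so instead I would weight: $\mathrm{osc}_{C_k} g \le \sqrt{O(k M)\,\mathcal E_k}$ gives $(g_a-g_b)^2 / (kM) \lesssim \mathcal E_k$, and summing $\sum_{k=1}^{r} 1/k \asymp \ln r$ against $\sum_k \mathcal E_k \le O(M)$ produces $(g_a-g_b)^2 \ln(r) \lesssim M^2$, i.e. $|g_a - g_b| \le \dfrac{100 M}{\sqrt{\ln r}}$ after tracking the constants. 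This last weighting step is the same device used to get the $\ln n$ in \eqref{eq10}.

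The main obstacle I anticipate is the combinatorial–topological bookkeeping for the layers $C_k$: one must verify that for $r \le n/3$ each $C_k$ really is a single separating cycle of length $O(k)$ (not several components, and not running into $\partial \mathcal P$), that $a$ and $b$ are genuinely on the outside of each such cycle so the maximum-principle trapping applies, and that the edge sets of the $C_k$ for distinct $k$ overlap with bounded multiplicity so that $\sum_k \mathcal E_k = O(\mathcal E(g))$. All of this is exactly where the hypothesis that $\T$ is a \emph{standard subdivision} of a fixed equilateral triangulation of a \emph{disk} — so locally around $u$ it looks like the hexagonal lattice out to combinatorial radius $\sim n$ — is used, mirroring the corresponding geometric input in Rodin–Sullivan. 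The analytic core (Cauchy–Schwarz on each loop, then the harmonic-series weighting against the global energy bound) is routine once the layers are in place.
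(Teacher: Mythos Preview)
Your analytic skeleton is exactly the paper's: bound $\mathcal E(g)$ above by testing against the indicator of $u$, set up combinatorial spheres $U_k=\{i:d_c(i,u)=k\}$, trap $g_a,g_b$ between the extreme values of $g$ on $U_k$ via the maximum principle, run Cauchy--Schwarz along a path in $U_k$ to get $(g_a-g_b)^2/(6k)$ on each layer, and sum the harmonic series against the energy. Two points deserve correction.

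First, the range of $k$. You write $r/2\le k\le r$ but then sum $\sum_{k=1}^r 1/k$; only the latter gives $\ln r$. The paper in fact takes $1\le k\le [r/3]$, and the upper cutoff $r/3$ (hence $k\le n/9$) is not cosmetic: it is exactly what makes the topological step below go through.

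Second, and this is the real content you flagged but did not resolve correctly: it is \emph{false} in general that $C_k$ is a single closed loop disjoint from $\partial\mathcal P$. The edge $[uv]$ is arbitrary, so $u$ may sit on or near $\partial\mathcal P$, and then $U_k$ hits the boundary already for small $k$ and need not be connected. The paper does not try to prove your claim; instead it argues as follows. For $k\le r/3\le n/9$ the ball $B_k(u)$ is connected and its complement $B_k(u)^c$ has at most two connected components. If there is only one, take $G_k=U_k$. If there are two, say $R_1,R_2$, then there is a non-flat boundary vertex $v'\in R_2$ with $d_c(u,v')\le 3k\le r$, which forces $R_2\subset B_r(u)$; since $a,b\notin B_r(u)$, both lie in $R_1$. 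One then takes $G_k$ to be the connected component of $U_k$ bordering $R_1$, and applies the maximum principle on the connected subgraph $R_1$ to produce $u_k,u_k'\in G_k$ with $g_{u_k}\ge g_b$ and $g_{u_k'}\le g_a$. The shortest edge path in $G_k$ from $u_k$ to $u_k'$ has length at most $|G_k|\le 6k$, and the rest of your Cauchy--Schwarz/harmonic-series computation goes through verbatim. So your plan is right in spirit, but the topological input is ``$a,b$ lie in the same component of $B_k(u)^c$'' rather than ``$U_k$ is a single cycle'', and establishing that is where the hypotheses $r\le n/3$ and the standard-subdivision structure are actually spent.
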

The strategy of
the proof to Lemma \ref{318} is similar to that of Lemma \ref{891}.
\begin{proof}
For $k\leq r/3$, let $U_k=\{ i \in V |d_c(i,u) =k\}$. 
Since $\T$ is an equilateral triangulation of a flat surface, we
have $|U_k| \leq 6k$. Recall that a subset $U$ of $V=V(\T)$ is
called connected if any two points in $U$ can be joint by an edge
path in $\T^{(1)}$ whose vertices are in $U$. Each subset $U
\subset V$ is a disjoint of connected subsets which are called
connected components of $U$.   We claim that there exists a
connected component $G_k$ of $U_k$ such that $\{a,b\}$ lie in a
connected components of $V-G_k$.
 To see this, note that since $\T$ is the $n$-th standard
subdivision of $\T'$,  for all $k \leq r/3 \leq n/9$, the set
$B_k(u) =\{i\in V| d_c(i,u)\leq k\}$ is connected and $B_k(u)^c=\{
i \in V| d_c(i,u) >k\}$ has at most two connected components which
are also connected components of $V-U_k$. If $B_k(u)^c$ is
connected, then $U_k$ is connected and we take $G_k=U_k$.
If $B_k(u)^c$ has two connected components $R_1$ and $R_2$, then
there exists a non-flat boundary vertex $v' \in R_2$ such that
$d_c(u,v')\leq 3k \leq r$. 
This shows that $v' \in B_r(u)$. 
See Figure \ref{pp5}. The component $R_2$ is contained in $B_r(u)$
due to $d_c(v',u)\leq r$. Since $a, b \notin B_r(u)$, it follows
that $a,b$ are in $\mathcal R_1$. We take $G_k$ to be the
connected component of $U_k$ such that $R_1$ is a connected
component of $V-G_k$. Therefore, the claim follows.

\begin{figure}[ht!]
\begin{center}
\begin{tabular}{c}
\includegraphics[width=0.45\textwidth]{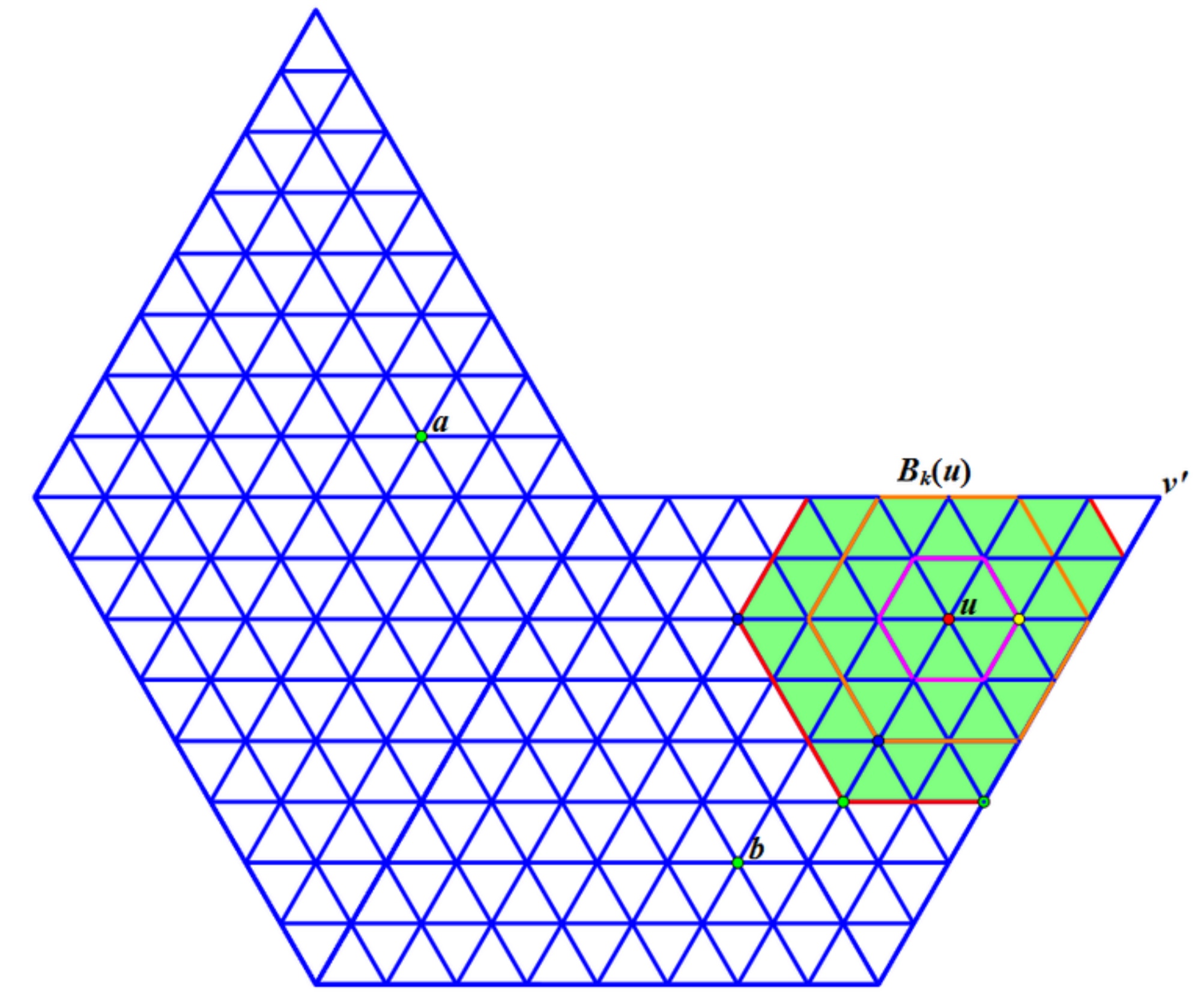}
\end{tabular}
\end{center}
\caption{Triangulated polygonal disks} \label{pp5}
\end{figure}


Let us assume without loss of generality that $g_a \leq g_b$. By
the maximum principle applied to $g$ on the connected graph whose
vertex set is the connected component of $V-G_k$ containing
$\{a,b\}$, there exist two vertices $u_k, u_k' \in G_k$ so that
$$ g_{u_k} \geq g_b, \quad \text{and} \quad g_{u'_k} \leq g_a.$$

Let $E_k$ be the shortest edge path with vertices in $G_k$
connecting $u_k$ to $u'_k$ and $\bar{E_k}$ be the set of all
oriented edges in $E_k$. The length of $E_k$ is at most $|G_k|\leq
6k$. The Dirichlet energy of $g$ on the graph $\T^{(1)}$ is
\begin{equation}\label{52}
\mathcal E(g) =\frac{1}{2}\sum_{i \sim j}
\eta_{ij}(g_i-g_j)^2 \geq
\\ \frac{1}{2M}\sum_{i \sim j} (g_i -g_j)^2 \geq \\
\frac{1}{2M}\sum_{k=1}^{[\frac{r}{3}]} \sum_{ [ij]\in \bar{E_k}}
(g_i-g_j)^2.
\end{equation}
Suppose $w_0=u_k \sim w_1 \sim w_2 \sim...\sim w_{l_k} =u'_k $ is
the edge path $E_k$ where $l_k \leq 6k$. Then by the
Cauchy-Schwartz inequality, we have
\begin{equation}\label{53}\frac{1}{2}\sum_{[ij]\in \bar{E_k}} (g_i-g_j)^2 =
\sum_{i=1}^{l_k}(g_{w_i}-g_{w_{i-1}})^2 \geq \frac{1}{l_k
}
(\sum_{i=1}^{l_k}(g_{w_i}-g_{w_{i-1}}))^2 \geq
\frac{1}{l_k}(g_{u_k}-g_{u'_k})^2 \geq
\frac{(g_a-g_b)^2}{6k}.\end{equation} Combining (\ref{52}) and
(\ref{53}), we obtain
\begin{equation}\label{54}
\mathcal E(g) \geq \frac{1}{2M}
\frac{(g_a-g_b)^2}{6}\sum_{i=1}^{[\frac{r}{3}]} \frac{1}{k} \geq
\frac{(g_a-g_b)^2 \ln (r)}{100M}. \end{equation}

On the other hand by the Dirichlet principle we have $\mathcal
E(g) \leq \frac{1}{2}\sum_{i \sim j}
\eta_{ij}(h_i-h_j)^2$ for any $h \in \R^V$ such that
$h_u=1, h_v=0$. Take $h$ to be $h_u=1$ and $h_i=0, \forall i \in
V-\{u\}$. We obtain $\mathcal E(g) \leq \frac{1}{2}\sum_{i \sim j}
\eta_{ij}(h_i-h_j)^2 \leq 6M$. Therefore,
$\frac{(g_b-g_a)^2 \ln(r)}{100M} \leq 6M$ which implies
$$|g_b-g_a| \leq \frac{100M}{\sqrt{\ln(r)}}.$$
\end{proof}

\subsection{A proof of Theorem \ref{1.1}}
For simplicity, a boundary vertex $v \in \mathcal P -\{p,q,r\}$
with non-zero curvature will be called a \it corner\rm. Note that
corners in $\T$ and its $n$-th standard subdivision $\T_{(n)}$ are
the same.  In particular, the total number of corners is
independent of $n$. Let $V_c$ be the set of all corner vertices.
Since $\mathcal P$ is embedded in $\C$, given a corner $v \in
V_c$, the degree $m$ of $v$ has to be 3,5 or 6. Consider the
combinatorial ball $B_{[n/3]}(v)$ of radius $[n/3]$ centered at a
corner $v \in V_c$ in $\T_{(n)}$. By construction $B_{[n/3]}(v)
\cap B_{[n/3]}(v') =\emptyset$ for distinct corners $v,v'$. Each
$B_{[n/3]}(v) $ is a union of $m-1$ $[n/3]$-th standard subdivided
equilateral triangles $\Delta_1, ..., \Delta_{m-1}$ in $\T$.
Applying Theorem \ref{1.2} with $\alpha=\frac{\pi}{m-1}$ to the
triangulated  equilateral triangle $(\Delta_i, v)$ for each
$i=1,2,...,m-1$, we produce a discrete conformal factor
$w(\Delta_i)\in \R^{V(\Delta_i)}$ for each $\Delta_i$ such that if
a vertex  $u \in V(\Delta_i)\cap V(\Delta_j)$, then
$w_u(\Delta_i)=w_u(\Delta_j)$. In particular there is a well
defined discrete conformal factor $w(B_{[n/3]}(v))$ on
$B_{[n/3]}(v)$ obtained by gluing these $w(\Delta_i)$. See Figure
\ref{p4}.
 Define $w^{(1)}:
V(\T_{(n)}) \to \R$ as follows: if $u \in \cup_{v \in V_c}
B_{[n/3]}(v)$, then $w^{(1)}_u=w_u(B_{[n/3]}(v))$ for $u \in
B_{[n/3]}(v)$ and $w^{(1)}(u)=0$ for  $u \notin \cup_{v \in V_c}
B_{[n/3]}(v)$. Let $\hat{l}=w^{(1)}*l_{st}$ be the PL metric on
$\T_{(n)}$ and  $\hat{K}$ be its the discrete curvature. Let $K^*:
V_{(n)} \to \R$ be defined by $K^*_i=0$ if $i \notin\{p,q,r\}$,
and $K^*_i=\frac{2\pi}{3}$ if $i \in \{p,q,r\}$.
 By Theorem \ref{1.2}, the PL metric $\hat{l}$ and $\hat{K}$ satisfy
the following:

\noindent (a) the curvature $\hat{K}_i=K^*_i$  at all vertices $i$ such
that $d_c(i,v)\neq[n/3]$ for some corner $v \in V_c$;
\\
(b) $w^{(1)}_i=0$ for $i \notin \cup_{v \in V_c} B_{[n/3]}(v)$
;\\
(c) all inner angles at a corner $v \in V_c$  are in $[\frac{\pi}{6}, \frac{\pi}{2}]$;\\
(d) all inner angles at a non-corner vertex  are  in  $[\frac{\pi}{6}, \frac{59\pi}{120}]$;\\

\noindent (e) $|\hat{K}_i-K^*_i| \leq\frac{4000}{\sqrt{\ln(n)}}$
 and $\sum_{i
\in V} |\hat{K}_i-K^*_i|\leq \frac{\pi N}{3}$ where $N$ is the
number of corners in $\mathcal P$.

We will find a discrete conformal factor $w^{(2)}: V_{(n)} \to \R$
such that $w^{(2)}*\hat{l}$ and its curvature satisfy Theorem
\ref{1.1} by solving the following system of ordinary differential
equations in $w(t)$:
\begin{equation}\label{ode4}
 \frac{d K_i(w(t)*\hat{l})}{dt} =  K^*_i-\hat{K_i}, \forall i \in V(\T_{(n)})-\{p,q,r\};
 w_s(t)=0, s \in \{p,q,r\};
 \quad \text{and} \quad w(0)=0.
\end{equation}
Let $K(t)=K(w(t)*\hat{l})$.  Note that (\ref{ode4}) and the
Gauss-Bonnet formula imply that $K_p'(t)=K_p^*-\hat{K_p}$.
 By Lemma
\ref{odeexist}, the solution $w(t)$ exists on some interval
$[0, \epsilon)$. Our goal is to show that for $n$ large, the solution
$w(t)$ exists on $[0, 1]$. In this case, the conformal factor
$w^{(2)}$ is taken to be $w(1)$. The required discrete conformal
factor $w_n$ in Theorem \ref{1.1} is taken to be
$w^{(1)}+w^{(2)}$.

Consider the maximum time $t_0$ such that the solution $w(t)$ to
(\ref{ode4}) exists for $t\in [0, t_0)$ and the PL metrics
$w(t)*\hat{l}$ satisfy: \\
($c'$) all inner angles at a corner $v \in V_c$  are in
$[\frac{\pi}{6}-\frac{\pi}{1000},
\frac{\pi}{2}+\frac{\pi}{1000}]$;\\
($d'$) all inner angles at a non-corner vertex are in
$[\frac{\pi}{6}-\frac{\pi}{1000}
,\frac{59\pi}{120}+\frac{\pi}{1000}]$.\\

  Let $V_0=\cup_{v \in V_c} \{ i \in V_{(n)} | d_c(i, v)=[n/3]$\}. By
 construction,  $|B_r(i)\cap V_0|\leq
10r$ for all $r \leq n/3$.  Then
 $\sum_{i \in V_0} |(\Delta w'(t))_i| = \sum_{i \in
V_0} |K_i'(t)| \leq \sum_{i \in V_0} |\hat{K_i}-K_i^*| \leq
\frac{\pi N}{3}$ and  $|(\Delta w')_i| =|K_i'(t)| \leq
|\hat{K_i}-K^*_i| \leq \frac{4000}{\sqrt{\ln(n)}}$. Choose
$M=\max\{  4000, \frac{\pi N}{3}\}$.
Then by $(c')$, $(d')$, $(e)$ and the formula
$\cot(a)+\cot(b)=\frac{\sin(a+b)}{\sin(a)\sin(b)}$,  for all $t\in
[0, t_0)$,
 we have
 $\eta_{ij}(t)=\eta_{ij}(w(t)*\hat{l}) \in [\frac{1}{4000}, 4000]\subset
 [\frac{1}{M}, M]$,
$(\Delta w')_i=0$ for $i \in V(\T_{(n)})-V_0$, $|(\Delta w')_i|
\leq \frac{M}{\sqrt{\ln(n)}}$ and $\sum_{i \in V_0}|(\Delta w')_i|
\leq M$. In summary, $f=w'$ satisfies conditions in Proposition
\ref{311} for all $t \in [0, t_0)$.     By Proposition \ref{311},
if  $i \sim j$, then
$$ |w'_i(t)-w_j'(t)| \leq \frac{200 M^3}{\sqrt{\ln(\ln(n))}}.$$
On the other hand, by the variation of angle formula (\ref{893})
and $M \geq  |\cot(\theta^k_{ij})|$, we have
$$|\frac{d\theta^k_{ij}}{dt}| \leq
|\cot(\theta^i_{jk})(w'_j-w'_k)|+|\cot(\theta^j_{ik})(w_i'-w_k')|
\leq M (|w_j'-w_k'|+ |w_i'-w_k'|) \leq
\frac{400M^4}{\sqrt{\ln(\ln(n))}}.$$
 Therefore, for $t \in [0, t_0)$ and sufficiently large $n$,
\begin{equation}\label{ineq9}
|\theta^k_{ij}(w(t))-\theta^k_{ij}(0)| \leq \int_0^t |\frac{d
\theta^k_{ij}(w(t))}{dt}| dt \leq \frac{400M^4
t_0}{\sqrt{\ln(\ln(n))}} \leq \frac{\pi t_0}{2000}.
\end{equation}
It follows that $t_0 >1$ (or $t_0=\infty$) since otherwise, by
(\ref{ineq9}), the choices of angles in (c),(d), ($c'$), ($d')$
 and Lemma \ref{odeexist}, we can extend the solution $w(t)$ to
$[0, t_0+\epsilon)$ for some $\epsilon >0$ such that $(c')$ and
$(d')$ still hold. To be more precise, by Lemma \ref{odeexist} on
the maximality of $t_0$, we have either $\limsup_{t \to t_0}
|\theta^i_{jk}(t)-\frac{\pi}{3}|= \frac{\pi}{1000}$
for an inner
angle $\theta^i_{jk}$ at a corner $i \in V_c$,  or $\limsup_{t \to
t_0}\theta^i_{jk}(t)=\frac{\pi}{6}-\frac{\pi}{1000}$ or
$\limsup_{t \to t_0}\theta^i_{jk}(t)=\frac{59\pi}{120}+\frac{\pi}{1000}$
for
an angle $\theta^i_{jk}$ at a non-corner vertex $i$.
But, due to (\ref{ineq9}), none of these
conditions holds   if $t_0\leq 1$.
 Therefore the
solution  $w(1)$ exists. By construction, the curvature $K(1)$ of
$w(1)*\hat{l}$ is $K(0)+\int^1_0 K'(t)dt=\hat{K}+K^*-\hat{K}=K^*$.
Furthermore, condition (c) in Theorem \ref{1.1} follows from
($c'$) and ($d')$.

\section{A proof of the convergence theorem}

We will prove the
following theorem.

\begin{theorem}\label{conv1} Let $\Omega$ be a Jordan domain in the complex
plane and  $\{p,q,r\} \subset \partial \Omega$. There exists a
sequence of triangulated polygonal disks $(\Omega_{n}, \T_n,
d_{st}, (p_n, q_n, r_n))$ where $\T_n$ is an equilateral
triangulation and $p_n, q_n, r_n$ are three boundary vertices such
that

(a) $\Omega=\cup_{n=1}^{\infty} \Omega_n$ with $\Omega_n \subset
\Omega_{n+1}$, and  $\lim_n p_n =p$, $\lim_n q_n =q$ and $\lim_n r_n=r$,

(b)
 discrete uniformization maps $f_n$ associated to $(\Omega_{n},
\T_n, d_{st}, (p_n, q_n, r_n))$  exist and  converge uniformly to the
 Riemann mapping associated to
 $(\Omega, (p,q,r))$.
\end{theorem}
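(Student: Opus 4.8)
The plan is to follow the two--step strategy of Rodin-Sullivan: first produce a sequence of discrete uniformization maps $f_n$ whose quasiconformal constants are bounded independently of $n$, then extract a quasiconformal limit and use Lemma \ref{hex} (together with the rigidity Theorem \ref{rigidity}) to force that limit to be conformal, hence equal to the Riemann mapping.

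First I would build the exhaustion. Fixing a triangular grid of small mesh $\epsilon$ in $\C$, I would show that for every $\delta>0$ there is a polygonal disk $\mathcal P$, a union of grid triangles, with $\mathcal P\subset\Omega$, $\mathcal P$ within Hausdorff distance $\delta$ of $\overline{\Omega}$, and \emph{exactly three} boundary vertices of curvature $\frac{2\pi}{3}$ (equivalently, exactly three $60^\circ$ convex corners), located within $\delta$ of $p,q,r$. The construction is elementary: near $p$ one inserts a tiny $60^\circ$ grid wedge with apex at a grid point, and the remaining boundary is traced $\delta$-inside $\partial\Omega$ while only ever turning by a multiple of $60^\circ$ different from $+60^\circ$, so that no further sharp corner is created; the only facts used are that $\Omega$ is open and $\partial\Omega$ is a Jordan curve. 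Applying Theorem \ref{1.1} to $\mathcal P$ and passing to a sufficiently fine standard subdivision, I obtain an equilateral triangulation $\T_n$ of $\Omega_n:=\mathcal P$ (of mesh tending to $0$) and a discrete conformal factor $w_n$ with $K(w_n\ast l_{st})=0$ away from $\{p_n,q_n,r_n\}$, $=\frac{2\pi}{3}$ at $p_n,q_n,r_n$, and all inner angles in $[\epsilon_0,\pi/2+\epsilon_0]$ for a constant $\epsilon_0>0$ independent of $n$; choosing the $\mathcal P=\mathcal P_n$ nested and with $\delta=\delta_n\to0$ gives part (a). Then $(\Omega_n,\T_n,w_n\ast l_{st})$ is isometric to an equilateral triangle, which after a Euclidean similarity I normalize to be the fixed triangle $\Delta ABC$ that is the target of the Riemann mapping $g$; the discrete uniformization map $f_n\colon\Omega_n\to\Delta ABC$ then exists and carries $p_n,q_n,r_n$ to $A,B,C$.

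Next I would run the analytic part. Since $f_n$ is linear on each triangle of $\T_n$, sending an equilateral triangle onto one whose angles lie in the fixed compact subinterval $[\epsilon_0,\pi/2+\epsilon_0]$ of $(0,\pi)$, each $f_n$ is $K$-quasiconformal with $K=K(\epsilon_0)$ independent of $n$; moreover Theorem \ref{1.1}(c) makes $w_n\ast l_{st}$ a flat Delaunay PL metric with embeddable developing map, namely $f_n$. I would then consider the inverses $h_n=f_n^{-1}\colon\Delta ABC\to\Omega_n\subset\Omega$, which are $K$-quasiconformal on a common Jordan domain and uniformly bounded; by the Rado-Palka theorem on uniform convergence of quasiconformal mappings a subsequence converges uniformly on $\overline{\Delta ABC}$ to a map $h$ that is either constant or $K$-quasiconformal. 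Since $h(A),h(B),h(C)$ are the limits $p,q,r$ of $h_n(A),h_n(B),h_n(C)$ and $p,q,r$ are distinct, $h$ is $K$-quasiconformal and a homeomorphism onto its image, which one checks to be $\Omega$ using $\Omega_n\uparrow\Omega$. Put $F=h^{-1}\colon\Omega\to\Delta ABC$. To show $F$ is conformal I would argue exactly as Rodin-Sullivan: for any $z_0\in\Omega$ and any $n_0$, Lemma \ref{hex} applied to the restriction of $w_n\ast l_{st}$ to a large combinatorial ball around the $\T_n$-vertex nearest $z_0$ --- which for $n$ large is an interior, corner-free Delaunay hexagonal patch with embeddable developing map $f_n$ --- shows that the lengths of any two adjacent edges there differ by a factor at most $1+s_{n_0}$, so the affine pieces of $f_n$ near $z_0$ are asymptotically similarities; letting $n_0\to\infty$ forces the complex dilatation of $F$ to vanish, and a $K$-quasiconformal homeomorphism with vanishing dilatation is conformal.

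Finally I would identify $F$ with $g$. Being a conformal homeomorphism of the Jordan domain $\Omega$ onto $int(\Delta ABC)$, $F$ extends by Carath\'eodory's theorem to a homeomorphism $\overline{\Omega}\to\overline{\Delta ABC}$, and the boundary correspondence $h_n(A)=p_n\to p$, $h_n(B)=q_n\to q$, $h_n(C)=r_n\to r$ forces $F(p)=A$, $F(q)=B$, $F(r)=C$; by uniqueness of the Riemann mapping normalized at three boundary points, $F=g$. As every subsequence of $\{f_n\}$ has a further subsequence converging to $g$, the whole sequence converges, which is part (b). I expect the main obstacle to be the construction in the first step: producing the nested equilateral-triangulated polygonal approximations whose only sharp corners are the three prescribed ones near $p,q,r$, precisely because Theorem \ref{1.1} --- and hence the very existence of the maps $f_n$ --- applies only to such special triangulations, so the approximating surfaces cannot be chosen at will. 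Once they are in hand, the uniform quasiconformality is immediate from Theorem \ref{1.1}(c), and the passage to the conformal limit is Rodin-Sullivan's argument with Lemma \ref{hex} and Theorem \ref{rigidity} playing the roles of their Ring Lemma and the rigidity of hexagonal circle packings.
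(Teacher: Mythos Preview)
Your proposal is correct and follows essentially the same approach as the paper: construct the nested polygonal approximations with exactly three $\tfrac{2\pi}{3}$ corners, invoke Theorem~\ref{1.1} on a fine standard subdivision to obtain the discrete uniformization maps with a uniform angle bound (hence uniform $K$), pass to a subsequential quasiconformal limit via the Rado--Palka compactness, apply Lemma~\ref{hex} to force conformality, and identify the limit with the Riemann mapping by the three-point normalization. The only cosmetic difference is that the paper writes the discrete conformal map as $f_n:\Delta ABC\to\Omega_n$ and applies Lemma~\ref{hex} to $f_n^{-1}$, whereas you orient $f_n:\Omega_n\to\Delta ABC$ and work with $h_n=f_n^{-1}$; the content is identical, and you correctly flag the construction step as the place requiring care.
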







Before giving the proof, let us recall Rado's theorem and its generalization to quasiconformal maps.
If $\phi: \D \to \Omega$ is a K-quasiconformal map  onto a Jordan
domain $\Omega$, then $\phi$ extends continuously to a
homeomorphism $\overline{\phi}: \overline{\D} \to
\overline{\Omega}$ between their closures (see \cite[corollary on
page 30]{ahlf}). If $K=1$, $\overline{\phi}$ is the Caratheodory
extension of the Riemann mapping.  A sequence of Jordan curves
$J_n$ in $\C$ is said to converge uniformly to a Jordan $J$ curve
in $\C$ if there exist homeomorphisms $\phi_n: \mathbb S^1 \to
J_n$ and $\phi: \mathbb S^1 \to J$ such that $\phi_n$ converges
uniformly to $\phi$.  Rado's theorem
\cite{pomo} and its extension by Palka \cite[corollary 1]{palka}
states that,

\begin{theorem}[Rado, Palka] \label{rado} Suppose $\Omega_n$ is a sequence of
Jordan domains such that $\partial \Omega_n$ converges uniformly
to  $\partial \Omega$.  If $f_n: \D \to \Omega_n $ is a  K-quasiconformal map for each $n$ such that the sequence \{$f_n$\}
converges to a K-quasiconformal map $f: \D \to \Omega$ uniformly
on compact sets of $\D$, then $\overline{f_n}$ converges to
$\overline{f}$ uniformly on $\overline{\D}$.
\end{theorem}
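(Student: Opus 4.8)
The plan is to deduce uniform convergence on $\overline{\D}$ from the Arzel\`a--Ascoli theorem. Since $\{f_n\}$ already converges uniformly on compact subsets of $\D$, it suffices to show that the extended maps $\{\overline{f_n}\}$ form an equicontinuous family on $\overline{\D}$: granting this, every subsequence of $\{\overline{f_n}\}$ has a further subsequence converging uniformly on $\overline{\D}$ to a continuous map $g$, and $g|_{\D}=\overline{f}|_{\D}$ forces $g=\overline{f}$ by continuity, so the whole sequence converges uniformly to $\overline{f}$. Thus the entire proof reduces to a uniform modulus-of-continuity estimate for the $\overline{f_n}$ on $\overline{\D}$.

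For points in the interior this is classical: $K$-quasiconformal self-maps of $\D$ obey a local H\"older estimate with constants depending only on $K$ (one controls the distortion of the modulus of a ring domain by the factor $K$, cf.\ \cite{ahlf}), and the targets $\Omega_n$ are uniformly bounded because $\partial\Omega_n\to\partial\Omega$ uniformly; hence on each disk $\{|z|\le 1-\eps\}$ the maps $\overline{f_n}$ are equicontinuous with a bound independent of $n$. So the substance of the argument is equicontinuity up to $\partial\D$, and this is where the hypothesis on the boundary curves is used. I would first extract from the uniform convergence of parametrizations $\phi_n\to\phi$ a \emph{uniform local connectivity} of the images: for every $\eps>0$ there is $\delta>0$ so that for all $n$ any subarc of $\partial\Omega_n$ of diameter $\ge\eps$ has endpoints at distance $\ge\delta$ (this uses that $\phi$ and $\phi^{-1}$ are uniformly continuous on their compact domains and that $\phi_n$ is uniformly close to $\phi$).

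Then, fixing $\zeta\in\SS^1$ and small $\rho>0$, I would look at the collar half-annulus $A_\rho=\{z\in\D:\rho<|z-\zeta|<\sqrt\rho\}$, whose conformal modulus is $\asymp\log(1/\rho)$. Its image $f_n(A_\rho)$ is a ring domain of modulus at least $K^{-1}\operatorname{mod}(A_\rho)\to\infty$, separating $\overline{f_n}(\{|z-\zeta|\le\rho\}\cap\overline{\D})$ from a part of $\Omega_n$ of definite size. Since a ring of large modulus in a uniformly locally connected Jordan domain must have one complementary component of small diameter, and the outer component here has definite size, the inner component $\overline{f_n}(\{|z-\zeta|\le\rho\}\cap\overline{\D})$ has diameter tending to $0$ as $\rho\to0$ at a rate independent of $n$. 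That is exactly a uniform modulus of continuity at $\zeta$; by compactness of $\SS^1$ together with the interior estimate it yields equicontinuity of $\{\overline{f_n}\}$ on all of $\overline{\D}$, and the Arzel\`a--Ascoli argument of the first paragraph then finishes the proof.

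The hard part will be the uniform passage from ``large modulus'' to ``small inner complementary component'', carried out with constants independent of $n$: one must ensure that the outer side of $f_n(A_\rho)$ really does have diameter bounded below and cannot itself be the small piece. This is handled by choosing $\rho$ small depending only on the fixed local connectivity function of $\partial\Omega$ and on $K$, and using that for $n$ large $\partial\Omega_n$ lies within $o(1)$ of $\partial\Omega$, so the local connectivity constants of $\Omega_n$ agree with those of $\Omega$ up to a vanishing error; the finitely many remaining indices $n$ are covered by the fact that each individual $\overline{f_n}$ is already a homeomorphism of $\overline{\D}$. If desired, one can avoid the bare-hands extremal-length estimate by quoting the standard theory of boundary behaviour of quasiconformal maps and simply checking that all the constants involved depend on the data only through $K$ and through the modulus of continuity of the parametrizations $\phi_n$, which converges to that of $\phi$.
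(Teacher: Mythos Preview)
The paper does not give its own proof of this statement: Theorem~\ref{rado} is quoted as a known result, attributed to Rado \cite{pomo} for the conformal case and to Palka \cite[Corollary~1]{palka} for the quasiconformal extension. So there is no in-paper proof to compare against; the authors simply invoke the theorem as a black box in the proof of Theorem~\ref{conv1}.

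Your sketch follows the standard route one finds in the quasiconformal literature (equicontinuity via extremal-length/modulus estimates, then Arzel\`a--Ascoli), and the overall architecture is sound. A couple of points would need tightening if you wanted to turn it into a self-contained proof. First, the ``uniform local connectivity'' you extract from $\phi_n\to\phi$ is really a statement about the closures $\overline{\Omega_n}$ being uniformly locally connected, and you need to be explicit that the small-diameter component of the complement of $f_n(A_\rho)$ contains the boundary arc $\overline{f_n}(\{|z-\zeta|\le\rho\}\cap\partial\D)$ as well as the interior piece; this is where uniform local connectivity of $\overline{\Omega_n}$ (not just of $\partial\Omega_n$) enters. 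Second, the step ``the outer component has definite size'' needs a lower bound on $\operatorname{diam}\Omega_n$ uniform in $n$, which does follow from $\partial\Omega_n\to\partial\Omega$ but should be stated. With those details filled in, your argument is essentially the proof one finds in Palka's paper.
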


The following compactness result is a consequence of Palka's
theorem (\cite[corollary 1]{palka}) and Lehto-Virtanen's work
\cite[Theorems 5.1, 5.5]{lv}.


\begin{theorem}\label{cor6} Suppose $\Omega_n$ is a sequence of
Jordan domains such that $\partial \Omega_n$  converges uniformly
to  $\partial \Omega$ and $K>0$ is a constant. Let $p_n, q_n, r_n
\in
\partial \Omega_n$ and $p,q,r \in
\partial \Omega$ be distinct points such that $\lim_{n}p_n=p,
\lim_n q_n =q, \lim_n r_n =r$ and $h_n: \D \to \Omega_n$ be
K-quasiconformal maps such that $\overline{h_n}$ sends $(1,
\sqrt{-1},  -1)$ to $(p_n, q_n, r_n)$. Then there exists a
subsequence $\{h_{n_i}\}$ of \{$h_n\}$ converging uniformly on
$\overline{\D}$ to a K-quasiconformal map $h: \D \to \Omega$
sending $(1,   \sqrt{-1}, -1)$ to $(p,q,r)$.
\end{theorem}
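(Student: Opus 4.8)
The plan is to run a normal‑families argument up to the boundary, using Palka's corollary 1 for the requisite equicontinuity and Lehto–Virtanen's theorems to identify the limit. First I would extract uniform geometric control from the hypotheses: since $\partial\Omega_n\to\partial\Omega$ uniformly there is an $R$ with $\Omega_n\subset\{|z|<R\}$ for all $n$, and by the Jordan curve theorem $\overline{\Omega_n}\to\overline\Omega$ in the Hausdorff metric; and since $p,q,r$ are distinct with $p_n\to p$, $q_n\to q$, $r_n\to r$, there are $\delta>0$ and $N$ so that the pairwise distances among $p_n,q_n,r_n$ are $\ge\delta$ for $n\ge N$. I then discard the indices $n<N$.

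Next I would apply Palka's corollary 1 (together with the normality theorem \cite[Theorem 5.1]{lv}) to the normalized $K$‑quasiconformal homeomorphisms $h_n\colon\D\to\Omega_n$: because the images lie in the fixed ball $\{|z|<R\}$ and $1,\sqrt{-1},-1$ are sent to the $\delta$‑separated points $p_n,q_n,r_n$, the extended maps $\{\overline{h_n}\}$ form an equicontinuous family on $\overline\D$. Applying the same reasoning to the inverses $\overline{h_n^{-1}}\colon\overline{\Omega_n}\to\overline\D$ — these are $K$‑quasiconformal homeomorphisms onto the fixed Jordan domain $\D$, carrying the $\delta$‑separated points $p_n,q_n,r_n$ to $1,\sqrt{-1},-1$, with domains $\overline{\Omega_n}$ uniformly controlled by the first step — yields an equicontinuous family $\{\overline{h_n^{-1}}\}$ as well. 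By Arzelà–Ascoli and diagonalization I can then pass to a subsequence with $\overline{h_n}\to h$ uniformly on $\overline\D$ and $\overline{h_n^{-1}}\to g$ (uniformly, in the sense appropriate for the converging domains $\overline{\Omega_n}\to\overline\Omega$). Passing to the limit in the relations $\overline{h_n^{-1}}\circ\overline{h_n}=\mathrm{id}$ and $\overline{h_n}\circ\overline{h_n^{-1}}=\mathrm{id}$ shows $g=h^{-1}$, so $h\colon\overline\D\to\overline\Omega$ is a homeomorphism; being a homeomorphism of closed topological disks it carries $\partial\D$ onto $\partial\Omega$ and $\D$ onto $\Omega$, and $h(1)=p$, $h(\sqrt{-1})=q$, $h(-1)=r$.

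Finally I would observe that $h|_\D$ is injective, hence nonconstant, and is a locally uniform limit of the $K$‑quasiconformal maps $h_n|_\D$, so by \cite[Theorem 5.5]{lv} it is $K$‑quasiconformal; thus $h|_\D\colon\D\to\Omega$ is a $K$‑quasiconformal map whose boundary extension sends $(1,\sqrt{-1},-1)$ to $(p,q,r)$, and $\overline{h_n}\to\overline h$ uniformly on $\overline\D$ — the desired subsequence. (Alternatively one could first extract a locally uniform limit inside $\D$ via \cite[Theorem 5.1]{lv}, rule out the constant limit, and then upgrade to uniform convergence on $\overline\D$ via Theorem \ref{rado}; the essential content is the same.)

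I expect the main obstacle to be the step that secures the \emph{uniform} equicontinuity of $\{\overline{h_n}\}$ and $\{\overline{h_n^{-1}}\}$ all the way to the boundary: this is exactly where the three distinguished boundary points and the uniform bounds on the $\Omega_n$ are indispensable, since without them a subsequence could degenerate by collapsing to a constant or by crushing a boundary arc to a point — the behaviour that Palka's corollary is designed to exclude. The remaining ingredients — the point‑set‑topological identification of $h$ as a homeomorphism onto $\Omega$ and the invocation of \cite[Theorem 5.5]{lv} — are routine.
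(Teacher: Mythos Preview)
Your proposal is correct and matches the paper's intent: the paper does not give a proof but simply records Theorem~\ref{cor6} as ``a consequence of Palka's theorem (\cite[corollary 1]{palka}) and Lehto--Virtanen's work \cite[Theorems 5.1, 5.5]{lv},'' and your argument is a faithful expansion of exactly that citation. Your alternative route (LV~5.1 for a normal family, LV~5.5 to identify the limit as $K$--quasiconformal, then Theorem~\ref{rado} to upgrade to uniform convergence on $\overline\D$) is in fact the most direct reading of the paper's one-line justification, and you have correctly isolated the only nontrivial point---ruling out degeneration via the three separated boundary points.
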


Now we prove Theorem \ref{conv1}.
\begin{proof}
Given a Jordan
domain $\Omega$ with three distinct points $p,q,r$ in $\partial
\Omega$, construct a sequence of approximating polygonal disks
$\Omega_n$ such that (1) each $\Omega_n$ is triangulated by
equilateral triangles of side lengths tending to $0$, (2)
$\partial \Omega_n$ converges uniformly to the Jordan curve
$\partial \Omega$ such that $\Omega_n \subset \Omega_{n+1}$, (3)
there are three boundary vertices $p_n,q_n, r_n \subset
\partial \Omega_n$ such that $\lim_n p_n =p$, $\lim_n q_n =q$ and
$\lim_n r_n =r$, and (4) the curvatures of $\Omega_n$ at $p_n,
q_n, r_n$ are $\frac{2\pi}{3}$ and curvatures of $\Omega_n$ at all
other boundary vertices are not $\frac{2\pi}{3}$.

By Theorem \ref{1.1}, we produce a  standard subdivision $\T_n$ of
$\Omega_n$ and $w_n \in \R^{V(\T_n)}$ such that $(\Omega_n, \T_n,
w_n*l_{st})$ is isometric to the equilateral triangle $(\Delta
ABC, \T_n', l_n')$ with a Delaunay triangulation $\T_n'$ and
$A,B,C$ correspond to $p_n,q_n,r_n$. Let $f_n: (\Delta ABC,
\T_n',$ $( A,B,C)) \to (\Omega_n,$$ \T_n, (p_n, q_n, r_n))$ be the
associated discrete conformal map and $\bar{f}: (\Delta ABC, $
$(A,B,C))$$\to (\overline{\Omega}, (p,q,r))$ be the
Riemann mapping. We claim that $f_n$ converges uniformly to
$\bar{f}$ on
$\Delta ABC$. 

To establish the claim, first by Theorem \ref{1.1}, we know all
angles of triangles in the triangulated PL surface $(\Delta ABC, \T_n', l_n')$  are
at least $\epsilon_0>0$.  Therefore the discrete conformal maps $f_n$ are
K-quasiconformal for a constant $K$ independent of $n$.  By Theorem \ref{cor6}, it follows that every
limit function $g$ of a convergence subsequence
 \{$f_{n_i}$\} is a K-quasiconformal map from $int(\Delta ABC)$
to $\Omega$ which extends continuously to $\Delta ABC$ sending $A,
B,C$ to $p,q,r$ respectively. We claim that the limit map $g$
is conformal. Indeed, by Lemma \ref{hex}, the discrete conformal
map $f_n^{-1}$, when restricted to a fixed compact set $R$ of
$\Omega$, maps equilateral triangles in $\T_n$ which are inside
$R$ to triangles of $\T'_n$ that become arbitrarily close to
equilateral triangle as $n\to \infty$. Therefore the limit
map $g$ of the subsequence $f_{n_i}$ is 1-conformal and
therefore conformal in $int(\Delta ABC)$. The continuous extension
of $g$ sends $A,B,C$ to $p,q,r$ respectively by Theorem
\ref{cor6}. On the other hand, there is only one Riemann mapping
$f: int(\Delta) \to \Omega$ whose continuous extension sends
$A,B,C$ to $p,q,r$ respectively.
 Therefore, $g=f$.
This shows all limits of convergence subsequences of \{$f_n$\} are
equal $f$. Therefore $\{f_n\}$ converges to $f$ uniformly on
compact sets in $int(\Delta ABC)$. By Theorem \ref{rado},
$\overline{f_n}$ converges uniformly to $\bar{f}$.
\end{proof}




\section{A convergence conjecture on discrete uniformization maps}

We  discuss  a general approximation conjecture and the related topics of discrete conformal equivalence of polyhedral metrics.

\subsection{A strong version of convergence of discrete conformal maps}

As discussed before, the main drawback of the vertex scaling operation on polyhedral metrics is the lacking of an existence theorem. For instance, given a PL metric on a closed triangulated surface $(S, \T, l)$,  there is in general no discrete conformal factor $w: V \to \R$ such that
the new PL metric $(S, \T, w*l)$ has constant discrete curvature.

The recent work of \cite{glsw} established an existence and a uniqueness theorem for polyhedral metrics by allowing the triangulations to be changed.

\begin{definition} \label{ddc}(Discrete conformality of PL metrics \cite{glsw})
 Two PL metrics $d, d'$ on $(S,V)$ are discrete conformal  if there exist sequences
 of PL
metrics $d_1=d, ..., d_m =d'$ on $(S, V)$ and triangulations
$\T_1, ..., \T_m$ of $(S, V)$  satisfying

(a) (Delaunay) each $\T_i$ is Delaunay in $d_i$,

(b)(Vertex scaling) if $\T_i=\T_{i+1}$, there exists a function
$w:V \to \R$ so that if $e$ is an edge in $\T_i$ with end points
$v$ and $v'$, then the lengths $l_{d_{i+1}}(e)$ and $l_{d_i}(e)$
of $e$ in $d_i$ and $d_{i+1}$ are related by
\begin{equation}\label{conformal}
l_{d_{i+1}}(e) = e^{ w(v)+w(v')} l_{d_i}(e) ,
\end{equation}

(c) if $\T_i \neq \T_{i+1}$, then $(S, d_i)$ is isometric to $(S,
d_{i+1})$ by an isometry homotopic to the identity in $(S, V)$.
\end{definition}

The main theorem proved in \cite{glsw} is the following.
\begin{theorem}
\label{unif} Suppose $(S, V)$ is a closed connected marked surface
and $d$ is a PL metric on $(S, V)$. Then for any $K^*:V \to
(-\infty, 2\pi)$ with $\sum_{v \in V} K^*(v) =2\pi \chi(S)$, there
exists a PL metric $d^*$, unique up to scaling and isometry
homotopic to the identity, on $(S, V)$ such that $d^*$ is discrete
conformal to $d$ and the discrete curvature of $d^*$ is $K^*$.
Furthermore, the metric $d^*$ can be found using a finite
dimensional (convex) variational principle.
\end{theorem}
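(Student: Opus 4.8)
The plan is to follow the variational strategy of Bobenko--Pinkall--Springborn \cite{bps}, extended so that it survives changes of triangulation. The first step is to reinterpret the discrete conformal class of $d$ through hyperbolic geometry: by the Bobenko--Pinkall--Springborn correspondence, a Euclidean triangle equipped with a choice of positive number at each vertex corresponds to an ideal hyperbolic triangle carrying a horocycle at each ideal vertex, so a PL metric on $(S,V)$ together with a geometric Delaunay triangulation determines a complete hyperbolic metric on $S-V$ decorated by horocycles (Penner's decorated Teichm\"uller theory). Under this dictionary a vertex scaling $w*l$ fixes the hyperbolic structure on $S-V$ and only moves the decoration, while a Delaunay flip corresponds to passing to the canonical Epstein--Penner ideal cell decomposition. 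Hence the entire discrete conformal class of $d$, modulo scaling, is parametrized by the decoration space, i.e. by conformal factors $u\in\R^V$ modulo the diagonal line $\R(1,1,\dots,1)$: for each $u$ one obtains a PL metric $d_u$ by forming $u*l_d$ and performing Delaunay flips, and one must verify this is well defined.

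Next I would assemble the global convex functional. For a fixed triangulation $\T$ that is Delaunay for $d_u$ on an open cell $C_\T\subset\R^V$, the Bobenko--Pinkall--Springborn functional $E_\T$ is defined and convex on $C_\T$, with $\partial E_\T/\partial u_i = K_i(d_u)$ (so $\nabla E_\T$ is the curvature map) and Hessian equal to the cotangent Laplacian $[\eta_{ij}]$ of \S2, which is positive semidefinite exactly because the Delaunay condition gives $\eta_{ij}\ge 0$ there, with null space spanned by $(1,\dots,1)$ by Proposition \ref{variation}(b) whenever some triangle is nondegenerate. The key gluing input, again from \cite{bps}, is that along a wall $\{\eta_{ij}=0\}$ separating the cells of two triangulations differing by a flip, the two functionals match to second order. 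Combined with the fact (read off from the Epstein--Penner picture) that the cells $C_\T$ tile $\R^V$ in a locally finite way, this produces a single globally defined $C^1$ (indeed $C^2$) convex function $E:\R^V\to\R$ with $\nabla E(u)_i = K_i(d_u)$ and $E(u+t\mathbf 1)=E(u)$. Setting $\tilde E(u)=E(u)-\sum_i K_i^* u_i$, the critical points of $\tilde E$ are precisely the $u$ for which $d_u$ has discrete curvature $K^*$, and $\tilde E$ is still convex since a linear term does not change the Hessian.

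Existence and uniqueness then reduce to showing that $\tilde E$, restricted to the hyperplane $H=\{u\in\R^V:\sum_i u_i=0\}$, is strictly convex and proper. Strict convexity on $H$ follows from the null-space statement above, which propagates over $H$. Properness is where the hypotheses $K_i^*<2\pi$ for all $i$ and $\sum_i K_i^*=2\pi\chi(S)$ enter: along a ray $u=tv$ with $v\in H\setminus\{0\}$ and $t\to+\infty$ one groups the vertices by the sign of $v_i$, observes that the angles of $d_{tv}$ degenerate to $0$ or $\pi$ so the curvatures $K_i(d_{tv})$ tend to limiting values $K_i^\infty$ governed by a discrete Gauss--Bonnet estimate on the "collapsing" subsurface, and checks that the two conditions on $K^*$ force $\frac{d}{dt}\tilde E(tv)=\sum_i v_i\bigl(K_i(d_{tv})-K_i^*\bigr)$ to be eventually positive, so $\tilde E(tv)\to+\infty$. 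A proper, strictly convex, $C^1$ function on Euclidean space attains its minimum at a unique point; there $\nabla\tilde E=0$, and the resulting PL metric $d^*=d_u$ has curvature $K^*$, is discrete conformal to $d$ by construction, and is unique up to scaling (the direction $\mathbf 1$) and isometry homotopic to the identity because the parametrization $u\mapsto d_u$ is well defined on equivalence classes. The statement that $d^*$ is obtained from a finite-dimensional convex variational principle is then literally this construction applied to $\tilde E$ on $H$.

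The main obstacle is the second paragraph: proving that the piecewise-defined Bobenko--Pinkall--Springborn functionals genuinely fuse into one globally $C^1$ convex function on all of $\R^V$. This needs both the second-order matching across every flip wall and control of the combinatorics of how infinitely many triangulations can occur as $u$ varies, which is precisely what the Epstein--Penner convex hull construction organizes. A secondary technical point, most transparent in the decorated-hyperbolic picture where ideal triangles always exist and flips are unobstructed, is that $u*l_d$ need not satisfy the triangle inequalities in the original triangulation yet always becomes an honest PL metric after the Delaunay flips.
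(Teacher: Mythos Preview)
This theorem is not proved in the present paper: it is quoted as the main result of \cite{glsw} and is used here only as background for the convergence conjecture in \S7. So there is no proof in this paper to compare your proposal against.

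That said, your outline is essentially the strategy carried out in \cite{glsw}, building on \cite{bps}: pass to the decorated hyperbolic picture so that vertex scaling becomes motion of horocycles, patch the Bobenko--Pinkall--Springborn functionals across Delaunay cells into a single globally $C^1$ convex function $E$ on $\R^V$ with $\nabla E = K$, and minimize $E(u)-\sum_i K_i^* u_i$ on the hyperplane $\sum_i u_i=0$. You have also correctly identified the genuine work: (i) the global well-definedness and $C^1$ smoothness of $E$ across the infinitely many triangulation cells, which is handled via the Epstein--Penner convex hull decomposition, and (ii) the properness argument, which uses the strict inequalities $K_i^*<2\pi$ together with Gauss--Bonnet on collapsing subcomplexes. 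One small caution: the cell structure on $\R^V$ by Delaunay triangulations is not a priori locally finite in the naive sense, and making the gluing rigorous is exactly where \cite{glsw} invokes the hyperbolic convex-hull viewpoint rather than arguing purely in Euclidean terms; your sketch already gestures at this, but in a full write-up that step needs care.
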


There is a close relation between the discrete conformal equivalence defined in  Definition \ref{ddc} and convex geometry in hyperbolic 3-space.
The first work relating vertex scaling operation and hyperbolic geometry is in the paper by Bobenko-Pinkall-Springborn \cite{bps}. They associated each polyhedral metric on $(S, \T, l)$ a hyperbolic metric with cusp end on the punctured surface $S-V(\T)$.  However, the Delaunay condition on the triangulation $\T$ was missing in their definition.  The discrete conformal equivalence defined in Definition \ref{ddc} is equivalent to the following hyperbolic geometry construction.
Let $(S, V,d)$ be a PL surface. Take a Delaunay triangulation $\T$ of $(S, V, d)$ and consider the PL metric $d$ as isometric gluing of Euclidean triangles $\tau \in \T$.  Consider each triangle $\tau$ in $\T$  as the Euclidean convex hull of three points $v_1, v_2, v_3$ in the complex plane $\C$.  Let $\tau^*$ be the convex hull of $\{v_1, v_2, v_3\}$ in  the upper-half space model of the hyperbolic 3-space $\H^3$. Thus $\tau^*$ is an ideal hyperbolic triangle having the same vertices as that of $\tau$.  If $\sigma$ and $\tau$ are two Euclidean triangles in $\T$ glued isometrically along  two edges by an isometry $f $ considered as an isometry of the Euclidean plane, we glue $\tau^*$ and $\sigma^*$ along the corresponding edges using the \it same \rm map $f$ considered as an isometry of $\H^3$.  Here we have used the fact each isometry of the complex plane extends naturally to an isometry of the hyperbolic 3-space $\H^3$.  The result of the gluing of these $\tau^*$ produces a hyperbolic metric $d^*$ on the punctured surface $S-V$. It is easy to see  that $d^*$ is independent of the choices of Delaunay triangulations.
It is shown in \cite{glsw} (see also \cite{gglsw}) that two PL metrics $d_1$ and $d_2$ on $(S,V)$ are discrete conformal in the sense of Definition \ref{ddc} if and only if the associated hyperbolic metrics $d_1^*$ and $d_2^*$ are isometric by an isometry homotopic to the identity on $S-V$.

 Using this hyperbolic geometry interpretation, one defines the discrete conformal map between two discrete conformally equivalent PL metrics $d_1$ and $d_2$ as follows (see \cite{bps} and \cite{glsw}).  The vertical projection of the ideal triangle $\tau^*$ to $\tau$ induces a homeomorphism $\phi_d: (S-V, d^*)  \to (S-V, d)$.  Suppose $d_1$ and $d_2$ are two discrete conformally equivalent PL metrics on $(S,V)$. Then the {\it discrete conformal map}  from $(S, V, d_1)$ to $(S, V, d_2)$ is given by $\phi_{d_2} \circ \psi \circ \phi_{d_1}^{-1}$ where  $\psi: (S, V, d_1^*) \to (S, V, d_2^*)$ is the hyperbolic isometry. Note that in this new setting, discrete conformal maps are piecewise projective instead of piecewise linear.

Theorem \ref{unif} can be used for approximating Riemann mappings for Jordan domains.
 Given a
simply connected polygonal disk with a PL metric $(D, V, d)$ and
three boundary vertices $p,q,r \in V$, let the metric double of
$(D, V,d)$ along the boundary be the polyhedral 2-sphere $(\mathbb
S^2, V', d')$. Using Theorem \ref{unif}, one produces a new
polyhedral surface $(\mathbb S^2, V',
 d^*)$  such that: 11) $(\mathbb S^2, V', d^*)$ is discrete conformal to $(\mathbb
S^2, V', d')$; ( 2) the discrete curvatures of $d^*$ at $p,q,r$ are
$4\pi/3$;  (3) the discrete curvatures of $d^*$ at all other
vertices are zero; and  (4) the area of $(\mathbb S^2, V',
 d^*)$ is $\sqrt{3}/2$. Therefore $(\mathbb S^2, V',
 d^*)$ is isometric to the metric double
($\mathcal D(\Delta ABC), V'', d'')$ of an equilateral triangle
$\Delta ABC$ of edge length 1. Let  $F$  be the discrete conformal
map from ($\mathcal D(\Delta ABC), V'', d'')$  to $(\mathbb S^2,
V', d')$ such that $F$ sends $A,B,C$ to $p,q,r$ respectively. Due
to the uniqueness part of Theorem \ref{unif}, we may assume that
$f=F|: \Delta ABC \to D$ and $f$ sends $A,B,C$ to $p,q,r$
respectively. We call $f$ the \it discrete uniformization map \rm
associated $(D, V, d, (p,q,r))$.

A strong form of the convergence is the following,
\begin{conj}   Let $(\Omega$, $(p,q,r)$) be a Jordan domain in
the complex plane with three marked boundary points  and
$(\Omega_{n}, \T_n, d_{st},$ $ (p_n, q_n, r_n))$ be any sequence of
triangulated flat polygonal disks with three marked boundary
vertices such that

(a) $\T_n$ is an equilateral triangulation,

(b) $\partial \Omega_n$ converges uniformly to $\partial \Omega$,

(c)  the edge length of $\mathcal T_n$ goes to zero,

(d) $\lim_n p_n =p$, $\lim_n q_n =q$ and $\lim_n r_n=r$.

Then
discrete uniformization maps $f_n$ associated to $(\Omega_{n},
\T_n, d_{st}$,
$(p_n, q_n, r_n))$  converge uniformly to the
 Riemann mapping associated to
 $(\Omega, (p,q,r))$.
\end{conj}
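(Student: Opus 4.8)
The plan is to follow the two-step strategy of the proof of Theorem \ref{conv1}, but with Theorem \ref{1.1} (existence of a discrete uniformization metric after a carefully chosen subdivision) replaced by Theorem \ref{unif} of \cite{glsw}, which supplies the discrete uniformization map $f_n$ for \emph{every} triangulated polygonal disk, with no restriction on $\T_n$. Concretely, double $(\Omega_n,\T_n,d_{st})$ along its boundary to a flat polyhedral $2$-sphere $(\SS^2,V_n',d_n')$, and apply Theorem \ref{unif} with prescribed curvature $4\pi/3$ at the two preimages of each of $p_n,q_n,r_n$ and $0$ elsewhere. After normalizing the scale, the resulting metric $d_n^*$ is discrete conformal to $d_n'$ and, by the uniqueness part of Theorem \ref{unif} applied to the reflection symmetry, descends to $\Delta ABC$; this yields the piecewise projective discrete uniformization map $f_n\colon \Delta ABC\to\Omega_n$ sending $A,B,C$ to $p_n,q_n,r_n$. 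Existence is therefore automatic, and the content of the conjecture is the convergence $f_n\to f$.

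As in \S6, convergence reduces to two things: (i) a uniform bound on the quasiconformal dilatation of $f_n$, and (ii) conformality of every subsequential limit. Granting these, Theorem \ref{cor6} extracts from any subsequence a further subsequence converging uniformly on $\overline{\Delta ABC}$ to a $K$-quasiconformal $g\colon \mathrm{int}(\Delta ABC)\to\Omega$ with $g(A),g(B),g(C)=p,q,r$; conformality of $g$ forces $g$ to be the Riemann mapping for $(\Omega,(p,q,r))$ by uniqueness of the latter, whence $f_n\to f$ uniformly on compact subsets and, by Theorem \ref{rado}, $\overline{f_n}\to\overline f$ uniformly on $\overline{\Delta ABC}$. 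So everything comes down to (i) and (ii).

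For (i) one should use the ideal-hyperbolic-triangle description of discrete conformality recalled in \S7: $d_n^*$ is the gluing of ideal hyperbolic triangles over the Delaunay cells of $d_n^*$, and $f_n$ is piecewise projective with respect to these cells. The dilatation of $f_n$ is controlled by the Euclidean shapes of the cells in $d_n^*$ (viewed inside $\Delta ABC$) and of the corresponding cells inside $\Omega_n$; since each $\Omega_n$ is literally a union of equilateral triangles in $\C$, the latter are under control, so it suffices to show that the Delaunay cell decomposition underlying $d_n^*$ does not degenerate, i.e.\ that away from fixed neighborhoods of $p,q,r$ all cells have angles bounded below by an $\epsilon_0>0$ independent of $n$ and the combinatorics stays uniformly locally finite (a uniform bound on the flips relating $\T_n$ to the Delaunay triangulation of $d_n^*$). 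Near $p,q,r$ the prescribed curvature $4\pi/3$ gives a local model comparable to $z\mapsto z^2$, which is again uniformly quasiconformal. I expect this non-degeneracy estimate to be the main analytic obstacle: the finite-dimensional convex variational principle behind Theorem \ref{unif} does not by itself exclude thin cells, so a genuinely new a priori estimate is needed -- presumably a maximum principle for the associated Laplacian in the spirit of Theorem \ref{max}, together with gradient estimates as in \S5, but now carried out on triangulations that are allowed to flip.

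For (ii), the argument should parallel Rodin--Sullivan and the proof of Theorem \ref{conv1}: fix a compact $R\subset\Omega$ and study $f_n^{-1}$ on the subcomplex of $\T_n$ lying in $R$. After rescaling, that subcomplex converges to the standard triangular (hexagonal) lattice triangulation $\T_{st}$, and one wants -- by a renormalization and compactness argument, using the machinery of \S4.1 on limits of discrete conformal factors and developing maps -- to conclude that $f_n^{-1}$ carries these equilateral triangles to cells that become arbitrarily close to equilateral, so the subsequential limit $g$ is $1$-conformal, hence conformal. Here the difficulty of the strong conjecture is concentrated: Lemma \ref{hex} and Theorem \ref{rigidity} as stated are \emph{not} enough, because on the $\Delta ABC$ side the relevant cell structure is the Delaunay structure of $d_n^*$, which may differ from $\T_{st}$ by edge flips. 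One therefore needs a rigidity theorem for the triangular lattice that tolerates such flips -- in effect a finite, quantitative form of Conjecture \ref{c2} for a lattice -- and the limiting arguments of \S4.1 adapted to sequences of Delaunay triangulations with flips. Establishing this flip-tolerant rigidity, or finding a way around it, is in my view the central unresolved step of the program.
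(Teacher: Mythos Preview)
The statement you are attempting is labeled a \emph{conjecture} in the paper, not a theorem; the paper offers no proof and explicitly presents it as open (``There are still many problems to be resolved in order to prove the full convergence conjecture''). So there is no paper proof to compare against, and your proposal should be read as a program rather than a proof---which, to your credit, is exactly how you present it.

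Your outline is the natural one and correctly isolates the two genuine obstructions. A small slip: in the metric double, boundary vertices are identified, so $p_n,q_n,r_n$ each have \emph{one} image in $(\SS^2,V_n',d_n')$, not two; the prescribed curvature is $4\pi/3$ at these three points (total $4\pi$, matching Gauss--Bonnet), exactly as the paper states. More substantively, both gaps you flag are real and are precisely why the statement remains conjectural. For (i), Theorem~\ref{unif} guarantees existence of $d_n^*$ but gives no a~priori lower bound on angles of the Delaunay cells of $d_n^*$; without such a bound the maps $f_n$ need not be uniformly quasiconformal, and the compactness step via Theorem~\ref{cor6} collapses. The tools of \S5 (Theorem~\ref{1.1}) produce angle bounds only after a \emph{specific} subdivision, which is exactly what the strong conjecture forbids. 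For (ii), Lemma~\ref{hex} and Theorem~\ref{rigidity} are stated for a fixed hexagonal combinatorics with no flips, whereas the Delaunay triangulation of $d_n^*$ may differ from $\T_n$ by diagonal switches; a ``flip-tolerant'' rigidity of the type you describe is not available in the paper and is, as you say, the heart of the matter. Your proposal is thus an accurate diagnosis of what a proof would require, but it is not a proof, and neither is anything in the paper.
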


\subsection{Discrete conformal equivalence and convex sets in the hyperbolic 3-space}
We now discuss  the relationship between discrete
conformal equivalence defined in Definition \ref{ddc}, ideal convex sets in the hyperbolic 3-space
$\H^3$ and the motivation for Conjectures \ref{c1} and \ref{c2}.

The classical uniformization theorem for Riemann surfaces follows
from the special case that every simply connected Riemann surface
is biholomorphic to $\C$, $\D$ or $\mathbb S^2$.  The discrete
analogous should be the statement that each non-compact simply
connected polyhedral surface is discrete conformal to either $(\C,
V, d_{st})$ or $(\D, V, d_{st})$ where $V$ is a discrete set and $d_{st}$ is the standard Euclidean metric. Furthermore, the set $V$ is unique up to M\"obius transformations.
For a non-compact polyhedral surface $(S, V, d)$ with an
infinite set $V$, the
 hyperbolic geometric view point of discrete conformality is a better approach.
Namely discrete conformal equivalence  between two PL metrics is
the same as  the Teichm\"uller equivalence between their
associated hyperbolic metrics.  For instance, if we take a Delaunay triangulation $\T$ of the complex plane $(\C, d_{st})$ with vertex set $V$, then the associated hyperbolic metric $d^*_{st}$ on $\C-V$ is isometric to the boundary of the convex hull $\partial C_{\H}(V)$ in $\H^3$.  Therefore, a PL surface $(S, V', d)$ is discrete conformal to $(\C, V, d_{st})$ for some discrete subset $V \subset \C$ if and only if the associated hyperbolic metric $d^*$ is  isometric to the boundary of the convex hull $\partial C_{\H}(V)$ .  It shows discrete
uniformization is the same as
realizing hyperbolic metrics as the boundaries of  convex hulls
(in $\H^3$) of closed sets in $\partial \H^3$.
 One can formulate the
conjectural discrete uniformization theorem as follows.
Given a discrete set $V'$ in $\C$ or $\D$, let $\hat{d}$
be the unique conformal complete hyperbolic metric on $\C-V'$ or
$\D-V'$. Then $\hat{d}$ is isometric to the boundary of the convex
hull of a discrete set $V \subset \C$ or $(\C\cup \{\infty\} -\D)
\cup V$ where $V$ is discrete and
unique up to M\"obius transformations. This is the original
motivation for proposing Conjectures \ref{c1} and \ref{c2}.



These two conjectures bring discrete uniformization close to the
classical Weyl problem on realizing surfaces of non-negative
Gaussian curvature as the boundaries of convex bodies in the
3-space.  In the hyperbolic 3-space $\H^3$, convex surfaces have curvature at least $-1$.
The  work of Alexandrov \cite{al} and Pogorelov
\cite{pogo} show that for each path metric $d$ on the 2-sphere
$\SS^2$ of curvature $\geq -1$, there exists a compact convex
body, unique up to isometry, in $\H^3$ whose boundary
is isometric to $(\SS^2, d)$.   
The interesting remaining cases are  non-compact surfaces of
genus zero in the hyperbolic 3-space $\H^3$.   A theorem of
Alexandrov \cite{al} states that any complete surface of genus
zero whose curvature is at least $-1$ is isometric to the boundary
of a closed convex set in $\H^3$. On the other hand, given a
closed set $X \subset \C$, W. Thurston proved that the intrinsic
metric on $\partial C_{\H}(X)$ is complete hyperbolic  (see \cite{em} for a proof). Putting these two theorems
together, one sees that each complete hyperbolic metric on a
surface of genus zero is isometric to the boundary of the convex
hull of a closed set in the Riemann sphere. However, in this generality, the uniqueness of the convex surface is false.
Conjectures \ref{c1} and \ref{c2} say that one has both the existence and uniqueness if
one imposes restricts to the  boundaries of the convex hulls of
closed sets.

There are some evidences supporting Conjectures \ref{c1} and \ref{c2}.  The work of  Rivin \cite{rivin2} and Schlenker \cite{sch1} show
that Conjectures \ref{c1} and \ref{c2} hold if $\Omega$ has finite area (i.e., $X$ is a finite set) or if $\Omega$ is conformal to the 2-sphere
with a finite number of disjoint disks removed (i.e., $X$ is a finite disjoint union of round disks). 
Our recent work \cite{lw} shows that Conjectures \ref{c1} holds
for $\Omega$ having countably many topological ends using the work of He-Schramm on K\"obe conjecture.

One should compare Conjectures \ref{c1} and \ref{c2} with the K\"obe circle domain
conjecture which states that each genus zero Riemann surface is
biholomorphic to the complement of a circle type closed set in the
Riemann sphere. The work of He-Schramm \cite{hes} shows that
K\"obe conjecture holds for surfaces with countably many ends and
the circle type set is unique up to M\"obius transformations.
Uniqueness is known to be false for the K\"obe conjecture in
general.  Our recent work \cite{lw} shows that the K\"obe conjecture is equivalent to  Conjecture \ref{c1}.
 Other related  works are  \cite{bo},  \cite{fila},  \cite{labourie}, \cite{lec},  \cite{rivin2},  \cite{sch1}, \cite{schlenker}, and \cite{spring}.


We end this paper by proposing the following the conjecture. The work
of Rodin-Sullivan \cite{RS} and Theorem \ref{rigid} show the
rigidity phenomena for the two most regular patterns (regular
hexagonal circle packing and regular hexagonal triangulation) in
the plane. These rigidity results can be used to approximate the
Riemann mappings and the uniformization metrics. The third regular
pattern in the plane is the hexagonal square tiling in which each
square of side length one interests exactly six others. See figure
\ref{90}.
\begin{figure}[ht!]
\begin{center}
\begin{tabular}{c}
\includegraphics[width=0.2\textwidth]{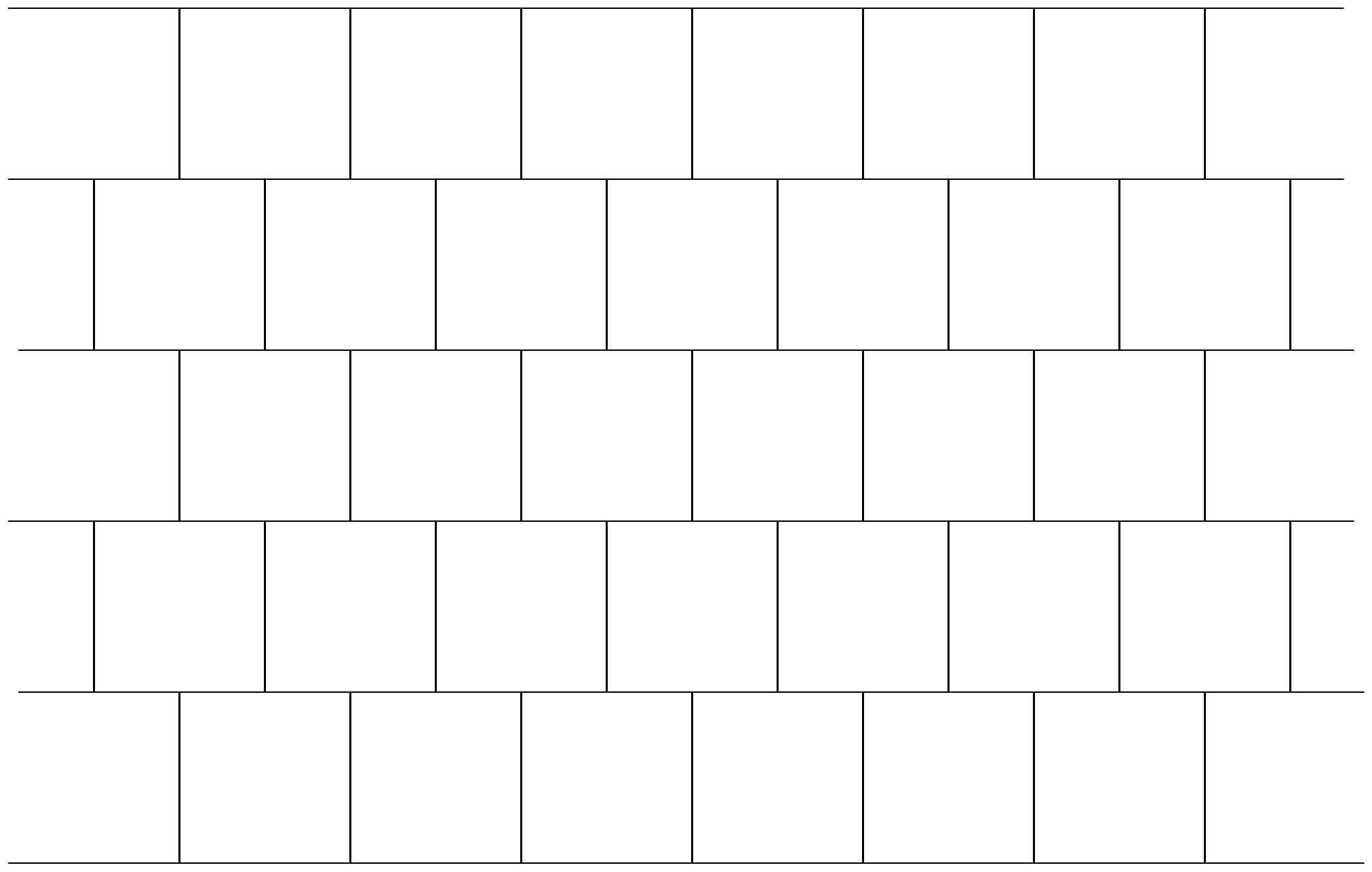}
\end{tabular}
\end{center}
\caption{Regular hexagonal square tiling} \label{90}
\end{figure}


\begin{conj} Suppose $\{S_i| i\in I\}$ is a locally finite
square tiling of the complex plane $\C$ such that each square
intersects exactly six others. Then all squares $S_i$ have the
same size.
\end{conj}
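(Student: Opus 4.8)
The plan is to carry over to square tilings the two-step method used above for hexagonal triangulations: a Ring Lemma playing the role of Lemma~\ref{ratio}, followed by a rigidity statement for linear conformal factors playing the role of Theorem~\ref{rigidity}. Encode a locally finite square tiling $\{S_i\}_{i\in I}$ by the vector $w\in\R^{I}$ with $e^{w_i}$ equal to the side length of $S_i$, so that the standard tiling of Figure~\ref{90} corresponds to $w\equiv 0$. Since every interior angle of a square is $\pi/2$ and the angle sum at each T-junction of a square tiling is automatically $2\pi$, there is no curvature obstruction; the only constraints are metric and combinatorial, namely that along each maximal horizontal segment of the tiling the side lengths of the squares abutting it from above sum to the same number as those abutting it from below, and symmetrically for vertical segments. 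A preliminary combinatorial step is needed to show that ``exactly six neighbors'' together with local finiteness forces the nerve of the tiling to be the triangular lattice, so that a group $\Z^{2}$ of combinatorial translations $u_1,u_2$ acts and the standard tiling has a one-square fundamental domain; for that combinatorics the matching relations are linear in the side lengths $e^{w_i}$ and the only $\Z^{2}$-invariant solution is the constant one.

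The first and decisive step is a Ring Lemma: there is a universal constant $\lambda$ with $|w_i-w_j|\le\lambda$ whenever $S_i\cap S_j\neq\emptyset$. As in Lemma~\ref{ratio}, the mechanism should be that the boundary of a square of side $s$ is covered by exactly six neighbors, none of which can be too small because the matching relations on each of the four sides, propagated around the six neighbors, force a uniform comparison. Granting this, one translates the tiling repeatedly along $u_1$ and $u_2$ and extracts, exactly as in Lemmas~\ref{6.2} and~\ref{6.3}, a limiting tiling whose factor $w_\infty$ satisfies $w_\infty(v+u_1)-w_\infty(v)=a_1$ and $w_\infty(v+u_2)-w_\infty(v)=a_2$ for all $v$, i.e.\ $w_\infty$ is affine. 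The input here is the analog of the maximum principle of Theorem~\ref{max}: if a factor $w'$ gives a square tiling of the same combinatorial type and attains its maximum at an interior square, then $w'$ is constant; I expect this to follow from the height/width balance at the maximizing square, with monotonicity of the side-matching relations in $w$ replacing monotonicity of angles.

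If the tiling is not rigid then $(a_1,a_2)\neq(0,0)$, and the last step is to rule out a genuine linear (spiral) factor. As in Proposition~\ref{linear} and the argument closing \S4, a non-constant affine $w_\infty$ makes the developing map of the tiling carry a holonomy which is a nontrivial homothety (scaling by $e^{a_1}\neq 1$, say); going once around a combinatorial loop enclosing a vertex the squares then fail to close up in $\C$, forcing two of them to overlap and contradicting that $\{S_i\}$ is a tiling. This is the precise counterpart of the ``embeddable implies not spiral'' contradiction at the end of \S4, and combined with the previous step it forces $w$, hence all the side lengths, to be constant.

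The main obstacle will be the Ring Lemma. In the circle packing case it is a clean statement about mutually tangent circles, and in the triangulation case Lemma~\ref{ratio} exploits the vanishing curvature at the central vertex; here one must extract a uniform bound purely from the way six squares of a priori unconstrained sizes wrap around a central square while respecting edge-length matching on all four sides, and it is conceivable that the local constraints alone do not suffice and a semi-global argument, or an essential use of local finiteness, is required. Two further difficulties are the preliminary combinatorial reduction --- ruling out, consistently over the whole plane, exotic ways for a square to have six neighbors such as three along one side and one along each of the others --- and formulating the correct analog of Theorem~\ref{max} in a setting where the ``curvature'' is identically $2\pi$, so that one must instead exploit the monotonic dependence of the side-matching equations on the conformal factor.
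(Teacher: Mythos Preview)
The statement you are attempting to prove is stated in the paper as an open \emph{conjecture}, not a theorem; the paper offers no proof of it whatsoever. It appears at the very end of \S7 as a proposed analogue, for the third regular pattern in the plane, of Rodin--Sullivan's hexagonal circle packing rigidity and of the paper's own Theorem~\ref{rigidity} for hexagonal triangulations. There is therefore no ``paper's own proof'' against which to compare your attempt.

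Your outline is a natural transplant of the paper's strategy from \S3--\S4 to the square-tiling setting, and you have correctly identified the three places where the argument is incomplete. The most serious is the Ring Lemma: in Lemma~\ref{ratio} the uniform ratio bound comes from the constraint that the six angles at the central vertex sum to $2\pi$, and this angular constraint has no direct counterpart for squares, where every corner angle is $\pi/2$ regardless of size; you would need a genuinely new mechanism, likely exploiting the side-length matching relations you describe, and it is not clear those local relations alone suffice. The maximum principle step faces the same difficulty: Theorem~\ref{max} hinges on the monotonicity of the cone angle in the conformal factor (Proposition~\ref{variation}), and with the angular data trivially constant you must find a substitute monotone quantity. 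Finally, the combinatorial reduction---that ``exactly six neighbors'' forces the nerve to be the triangular lattice---is itself nontrivial and, as you note, must rule out globally consistent configurations where the six neighbors distribute unevenly among the four sides. Until these three ingredients are supplied, what you have is a plausible program rather than a proof; the paper's authors evidently regarded the analogous obstacles as sufficient reason to leave the statement as a conjecture.
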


\end{document}